\declaretheoremstyle[%
  spaceabove=6pt,%
  spacebelow=6pt,%
  headfont=\normalfont\itshape,%
  postheadspace=3pt,%
  qed=\qedsymbol,%
  headpunct={}
]{mystyle} 
\declaretheorem[name={Proof},style=mystyle,unnumbered,
]{Proof}
\theoremstyle{plain}
\newtheorem{teorema}{Theorem} [section]
\newtheorem{proposizione}{Proposition} [section]
\theoremstyle{definition}
\newtheorem{definizione}{Definition}[section]
\newtheorem{esempio}{Example}[section]
\newtheorem{osservazione}{Remark}[section]
\newcommand{\del}{\partial}
\newcommand{\delbar}{\bar{\partial}}
\DeclareMathOperator{\GL}{GL}
\DeclareMathOperator{\SL}{SL}
\DeclareMathOperator{\Ker}{Ker}
\DeclareMathOperator{\im}{Im}
\newcommand{\C}{\mathbb{C}}
\numberwithin{equation}{section}
\title[Hard Lefschetz Condition on symplectic non-K\"ahler solvmanifolds]{Hard Lefschetz Condition on symplectic non-K\"ahler solvmanifolds}
\author{Francesca Lusetti and Adriano Tomassini}
\date{}
\address{Dipartimento di Scienze Matematiche, Fisiche e Informatiche, Unità di Matematica e Informatica\\
Universit\`{a} degli Studi di Parma \\
Parco Area delle Scienze 53/A, 43124 \\
Parma, Italy}
\email{francesca.lusetti2@studenti.unipr.it}
\email{adriano.tomassini@unipr.it}
\thanks{The authors are partially supported by the project PRIN2022 “Real and Complex Manifolds: Geometry and Holomorphic Dynamics” (Project code: 2022AP8HZ9) and by GNSAGA of INdAM}
\keywords{
symplectic structure; Hard-Lefschetz condition; solvmanifold; Kodaira dimension}
\subjclass[2010]{53C15; 53D05}
\begin{document}

\begin{abstract}
We provide new families of compact complex manifolds with no K\"ahler structure carrying symplectic structures satisfying the \textit{Hard Lefschetz Condition}. These examples are obtained as compact quotients of the solvable Lie group $\C^{2n} \ltimes_{\rho} \C^{2m}$, for which we construct explicit lattices. By cohomological computations we prove that such manifolds carry symplectic structures satisfying the \textit{Hard Lefschetz Condition}. Furthermore, we compute the Kodaira dimension of an almost-K\"ahler structure and generators for the de Rham and Dolbeault cohomologies.
\end{abstract}

\maketitle

\tableofcontents

\section{Introduction}
On a $2n$-dimensional symplectic manifold $(M, \omega)$, many operators can be naturally defined. Among them, an important role is played by the \textit{Lefschetz operator} $L$, its formal adjoint $\Lambda$ and the $d^{\Lambda}$ operator. The powers of the Lefschetz operator are isomorphisms between differential forms on $M$ i.e., $A^{n-k}(M) \stackrel{L^{k}}{\simeq} A^{n+k}(M)$. Since $L$ commutes with the exterior derivative $d$, a natural question is whether these maps induce isomorphisms on the de Rham cohomology. If this holds, then $(M, \omega)$ is said to satisfy the \textit{Hard Lefschetz Condition}, or briefly HLC. A classical result states that the fundamental form $\omega$ of a K\"ahler metric $g$ on a compact complex manifold $(M,J)$ satisfies the HLC, namely $(M,\omega)$ is a compact symplectic manifold satisfying the HLC. 
\newline
In \cite{Brylinski} Brylinski, starting with a symplectic manifold $(M,\omega)$, gave the definition of {\em symplectic harmonic} forms, as those $k$-forms which are both $d$ and $d^{\Lambda}$ closed. He formulated the following conjecture: {\em any $k$-de Rham cohomology class contains a symplectic harmonic representative}. He then proved the conjecture for compact K\"ahler manifolds.\newline
Later, O. Mathieu (see \cite{Mathieu}) proved the equivalence between the validity of the Hard Lefschetz Condition and the validity of the \textit{$dd^{\Lambda}$-lemma}, where, by definition, a symplectic manifold is said to satisfy the {$dd^{\Lambda}$-lemma if $$\Ker(d^{\Lambda}) \cap \im(d) = \im(dd^{\Lambda}).$$
Such a definition is a symplectic analogue of the  the\textit{ $\del\delbar$-lemma} for complex manifolds \cite{DGMS}. 
Furthermore, HLC is also related to the symplectic cohomologies $H^{\bullet}_{d+d^\Lambda}(M)$ and $H^{\bullet}_{dd^\Lambda}(M)$ introduced by Tseng and Yau in \cite{TY}, which have remarkable applications in the study of symplectic manifolds.
\newline
A natural question is whether there exist symplectic manifolds with no K\"ahler structure on which HLC holds. Interesting constructions of symplectic structures are obtained on compact quotients of connected and simply-connected nilpotent Lie groups by a lattice, namely on \textit{nilmanifolds}. However, these symplectic manifolds do not satisfy HLC, unless they are tori (see \cite{BG}). In the last years many authors have been studying almost-complex, Hermitian and symplectic geometry of nilmanifolds (see \cite{Ugarte}, \cite{Yan}, \cite{AK1}, \cite{DBT}, \cite{cavalcanti}, \cite{AGS}, \cite{Salamon}, \cite{CFM}, \cite{CFGU}, \cite{UV}, \cite{FGV}, \cite{FV}, \cite{FP} and the references therein). More recent studies have been focused on the wider class of \textit{solvmanifolds}, namely compact quotients of connected and simply-connected solvable Lie groups by a lattice. Such a class includes the one of nilmanifolds but it is way more difficult to build and study. Indeed, not every solvable Lie group admits lattices and explicit lattices are not easy to construct (see for example \cite{Auslander}, \cite{Yamada}); moreover, the de Rham cohomology cannot always be computed through invariant forms (see \cite{Hattori}, \cite{CF}, \cite{Kasuya1}), in contrast to the nilpotent case (see Nomizu's theorem in \cite{Nomizu}). In \cite{Hasegawa} and \cite{Hasegawa1}, K. Hasegawa proved that a compact solvmanifold admits a K\"ahler structure if and only if it is a finite quotient of a complex torus which has a structure of a complex torus bundle over a complex torus and obtained a classification of 4-dimensional compact solvmanifolds admitting complex structures.
Many of the known examples of symplectic solvmanifolds with complex non-K\"ahler structures are 4-dimensional and 6-dimensional (see \cite{TT}, \cite{FGG}, \cite{BG1}, \cite{Ovando}, \cite{DBT1} and the references therein).\\[5pt]
The purpose of this paper is to build new families of compact solvmanifolds admitting complex structures and symplectic structures satisfying the \textit{Hard Lefschetz Condition}, which are not K\"ahler. In particular, our manifolds will be of arbitrary dimension.
We started by examining from a symplectic point of view the \textit{generalized Nakamura manifolds} $N = N_{M,P,\tau}$, introduced by A. Cattaneo and the second author in \cite{CT}, which constitute a generalization of the constructions of I. Nakamura in \cite{N}. Our question is whether such solvmanifolds admit invariant symplectic structures: we show that this is true if and only if a condition on the eigenvalues $\lambda_{1}, ..., \lambda_{n}$ of the matrix $M$ used in the construction holds. In particular, all these symplectic structures over $N$ satisfy the \textit{Hard Lefschetz Condition}. More precisely, we prove the following (see Theorem \ref{SimpletticheHLCGenNakamura}):
\begin{teorema}\label{MainThm1}
    \setlength\itemsep{1em}
    Let $N = N_{M, P, \tau}$ be a generalized Nakamura manifold of complex dimension $n + 1$. 
    \begin{enumerate}
   \item If $n = 2m$, then $N$ admits invariant symplectic structures if and only if there exists a partition $\mathcal{P} = \{(i_1, i_2), ..., (i_{2n-1}, i_{2n})\}$ of the set $\{1, ..., 2n\}$ into disjoint couples such that 
   \[\lambda_i + \lambda_j = 0 \text{ for every } (i, j) \in \mathcal{P}.\]
    \item If $n = 2m+1$ then $N$ admits invariant symplectic structures if and only if there exists a partition $\mathcal{P} = \{(i_1, i_2), ..., (i_{2n-1}, i_{2n}), i_{2n+1}\}$ of the set $\{1, ..., 2n+1\}$ into disjoint couples plus one more index such that 
    \[\lambda_i + \lambda_j = 0 \text{ for every } (i, j) \in \mathcal{P} \text{ and } \lambda_{i_{2n+1}} = 0.\]
    \end{enumerate}
    Moreover, each of these symplectic structures satisfies the Hard Lefschetz Condition.
\end{teorema}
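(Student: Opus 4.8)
I would carry out everything at the level of the Chevalley--Eilenberg complex of the Lie algebra $\mathfrak{g}$ underlying $N$. Left-invariant forms on the covering solvable Lie group descend to $N$, so an invariant symplectic structure is exactly a closed, non-degenerate element of $\Lambda^{2}\mathfrak{g}^{*}$; and once one knows that the de Rham cohomology of $N$ is computed by invariant forms --- which for solvmanifolds is not automatic and has to be checked separately, e.g.\ via complete solvability or Kasuya's criterion --- the Hard Lefschetz Condition for an invariant $\omega$ becomes the statement that $L^{k}\colon H^{n+1-k}(\mathfrak{g})\to H^{n+1+k}(\mathfrak{g})$ is an isomorphism for every $k$. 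Thus the theorem reduces to a statement about the finite-dimensional differential graded algebra $(\Lambda^{\bullet}\mathfrak{g}^{*},d)$, whose structure equations express the differentials of the fibre one-forms $\zeta^{j},\bar\zeta^{j}$ through the base one-forms (which are closed) and $\zeta^{j}$ resp.\ $\bar\zeta^{j}$, with coefficients read off from the eigenvalues $\lambda_{1},\dots,\lambda_{n}$ of $M$ in a basis adapted to its Jordan form.

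The first computation is $d$ on all monomials of $\Lambda^{2}\mathfrak{g}^{*}$. The point to establish is that a fibre--fibre monomial is closed exactly when its two eigenvalues sum to zero, that a monomial pairing a base direction with a fibre direction is closed only when that fibre eigenvalue vanishes, and that $\zeta^{0}\wedge\bar\zeta^{0}$ is always closed; since the $3$-forms arising as differentials of distinct monomials are linearly independent, there is no cancellation among monomials, so one gets an explicit basis of the space $Z^{2}$ of closed invariant $2$-forms. It is convenient here to index the $2n$ fibre directions, carrying eigenvalues $\lambda_{1},\dots,\lambda_{n},\bar\lambda_{1},\dots,\bar\lambda_{n}$, by $\{1,\dots,2n\}$.

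Next, a closed invariant $\omega$ is non-degenerate iff $\omega^{n+1}\neq0$, i.e.\ iff one can pick $n+1$ of the closed building blocks above whose supports partition the $2n+2$ coordinate directions --- a perfect-matching problem. If $n$ is even, one matches the base directions by $\zeta^{0}\wedge\bar\zeta^{0}$ and is reduced to matching the $2n$ fibre directions in zero-sum pairs, which is precisely a partition $\mathcal{P}$ of $\{1,\dots,2n\}$ as in part (1). If $n$ is odd, a parity count forces the leftover fibre direction to be matched against a base direction, which by the previous step is possible only when that fibre eigenvalue is zero, giving the ``$n$ couples plus one index with $\lambda_{i_{2n+1}}=0$'' condition of part (2). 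For the converse, if no such $\mathcal{P}$ exists the same matching/parity count forces $\omega^{n+1}=0$ for all $\omega\in Z^{2}$, so $N$ admits no invariant symplectic structure; if $\mathcal{P}$ exists, summing the monomials it indexes together with $\zeta^{0}\wedge\bar\zeta^{0}$ (rescaled by factors of the imaginary unit to make $\omega$ real) gives an explicit closed $\omega$ with $\omega^{n+1}\neq0$.

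Finally one must show that \emph{every} invariant symplectic $\omega$ on $N$ satisfies HLC. With the explicit $H^{\bullet}(\mathfrak{g})$ in hand, each $L^{k}$ is a concrete linear map between explicit spaces and one checks it is an isomorphism; I expect the key point to be that the opposite-eigenvalue pairing forced by non-degeneracy is exactly what keeps the $L^{k}$ non-singular in all relevant degrees. Alternatively one may verify Mathieu's $dd^{\Lambda}$-lemma directly, producing a $dd^{\Lambda}$-primitive for every $d$-exact, $d^{\Lambda}$-closed invariant form. I expect this last step to be the main obstacle, as it presupposes the (non-trivial) de Rham computation and then asks for a verification uniform over the whole family of invariant symplectic forms; the bookkeeping behind the even/odd dichotomy in the existence criterion is a secondary difficulty.
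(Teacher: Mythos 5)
Your strategy is the same as the paper's (explicit invariant forms for sufficiency, analysis of closed invariant $2$-forms --- legitimate because $G_M$ is completely solvable, so the de Rham cohomology is the invariant one --- for necessity, and explicit computation of $L^{k}$ on generators for HLC), but there is a concrete false step in your determination of $Z^{2}$. You assert that the differentials of distinct monomials in $\Lambda^{2}\mathfrak{g}^{*}$ are linearly independent, so that every closed invariant $2$-form is a combination of closed monomials. This fails for the base--fibre monomials: from $d\varphi^{i}=-\tfrac{1}{2}\lambda_{i}(\varphi^{0}+\bar\varphi^{0})\wedge\varphi^{i}$ one computes
\[
d(\varphi^{0}\wedge\varphi^{i})=\tfrac{1}{2}\lambda_{i}\,\varphi^{0}\wedge\bar\varphi^{0}\wedge\varphi^{i}=-\,d(\bar\varphi^{0}\wedge\varphi^{i}),
\]
so $(\varphi^{0}+\bar\varphi^{0})\wedge\varphi^{i}$ (and its conjugate) is closed for \emph{every} $i$, not only for $\lambda_{i}=0$; indeed for $\lambda_{i}\neq 0$ it equals $-\tfrac{2}{\lambda_{i}}\,d\varphi^{i}$ and is exact. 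Hence $Z^{2}$ strictly contains the span of the closed monomials, and your perfect-matching argument for the ``only if'' direction is run over the wrong edge set. The conclusion can be rescued --- a product of two such extra blocks vanishes because $(\varphi^{0}+\bar\varphi^{0})\wedge(\varphi^{0}+\bar\varphi^{0})=0$, so in any nonzero monomial of $\omega^{n+1}$ at most one of them appears, and the second base direction must then be absorbed either by $\varphi^{0}\wedge\bar\varphi^{0}$ or by an individually closed base--fibre monomial, which exists only for a zero eigenvalue; the multiplicity count $\mathrm{mult}(v)=\mathrm{mult}(-v)$ then forces the partition $\mathcal{P}$ exactly as in your Case A --- but this case analysis is absent from your write-up and is precisely where an uncorrected matching argument breaks. (The paper avoids the issue by arguing with cohomology classes, for which these extra closed forms are invisible since they are exact.)

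Separately, the ``Moreover'' clause is not actually proved: you state that each $L^{k}$ ``is a concrete linear map \dots and one checks it is an isomorphism'' and explicitly defer this, calling it the main obstacle. The paper carries it out: it first reduces to $n=2m$ via the splitting $N\simeq N'\times\mathbb{T}^{1}_{\mathbb{C}}$ of Remark 3.1, expands $\omega^{k}$ explicitly, wedges it against the three types of generators $\varphi^{I}\wedge\bar\varphi^{J}$, $\varphi^{0}\wedge\varphi^{I}\wedge\bar\varphi^{J}$, $\bar\varphi^{0}\wedge\varphi^{I}\wedge\bar\varphi^{J}$ with $c_{IJ}=0$, and verifies that the images are independent generators of $H^{2m+1+k}_{d}(N)$, the opposite-eigenvalue pairing guaranteeing that the complementary multiindices needed in the target are actually reached. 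As written, your proposal establishes the existence criterion only modulo the $Z^{2}$ correction above and leaves the Hard Lefschetz assertion as an unverified claim.
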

The theorem is proved in section 3, and shows how the \textit{generalized Nakamura manifolds} provide examples of compact solvmanifolds admitting complex and symplectic structures satisfying HLC which are not K\"ahler. A natural question is whether the construction of \textit{generalized Nakamura manifolds} can be somehow generalized to build quotients of solvable Lie groups of type $\C^{N} \ltimes_{\rho} \C^{M}$. The question arises from the papers of H. Kasuya and also D. Angella (see \cite{AK}, \cite{Kasuya}, \cite{Kasuya2}, \cite{Kasuya3}): they show techniques that, under some hypotheses, allow the computation of the de Rham, Dolbeault and Bott-Chern cohomologies of complex manifolds of type $\C^{N} \ltimes_{\varphi} N$, where $N$ is a nilpotent Lie group. In particular, one of the hypothesis is that $\C^{N} \ltimes_{\varphi} N$ admits a lattice which splits as a semi-direct product of a lattice in $\C^{N}$ and one in $N$. As already remarked, effectively building lattices in solvable Lie groups is not an easy task. In section 3 of this paper we explicitly construct families of lattices in $G_{n,m} := \C^{2n} \ltimes_{\rho} \C^{2m}$, where $\rho$ is the action of $\C^{2n}$ on $\C^{2m}$ induced by a unimodular diagonalizable matrix $M \in \SL(2m,\mathbb{Z})$, such that $PMP^{-1} = diag(e^{\lambda}, e^{-\lambda}, ..,e^{\lambda}, e^{-\lambda} )$. We point out that the Lie groups $G_{n,m}$ are 2-step solvable and not of completely solvable type. Hence, Hattori's theorem (see \cite{Hattori}) cannot be applied and it actually turns out that the de Rham cohomology is not the invariant one. We then proceed to study many properties of these solvmanifolds: symplectic structures, HLC, non-K\"ahlerianity, almost-complex structures and their Kodaira dimension.  Our results can be summarized in the following:

\begin{teorema}\label{MainThm2}
    Let $G_{n,m} = \C^{2n} \ltimes_{\rho} \C^{2m}$. Then the following hold:
    \begin{enumerate}
        \item $G_{n,m}$ admits lattices $\Gamma_{\underline{\mu},P}$ and $N_{\underline{\mu},P} = \Gamma_{\underline{\mu},P} \backslash (\C^{2n} \ltimes_{\rho} \C^{2m})$ is a compact solvmanifold admitting complex structures;
        \item $N_{\underline{\mu},P}$ carries symplectic structures satisfying the \textit{Hard Lefschetz Condition};
        \item $N_{\underline{\mu},P}$ carries almost-complex structures which are $\omega$-compatible, not integrable and with zero Kodaira dimension;
        \item $N_{\underline{\mu},P}$ is not K\"ahler.
    \end{enumerate}
\end{teorema}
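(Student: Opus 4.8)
The plan is to prove the four statements of Theorem \ref{MainThm2} in the order in which they are listed, since each builds on the previous ones. The structure mirrors the strategy already carried out for the generalized Nakamura manifolds in Theorem \ref{MainThm1}, but now the technical heart is the explicit construction of lattices, which is genuinely harder because $G_{n,m}$ is only $2$-step solvable and not completely solvable.

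**For part (1),** I would start from the semidirect product structure $G_{n,m} = \C^{2n} \ltimes_{\rho} \C^{2m}$, where $\rho(z_1,\dots,z_{2n})$ acts on $\C^{2m}$ via the matrix obtained by exponentiating (a linear combination of) the $z_j$ against the logarithm of $M$. The key observation is that $M \in \SL(2m,\mathbb{Z})$ is unimodular and diagonalizable with $PMP^{-1} = \mathrm{diag}(e^{\lambda}, e^{-\lambda}, \dots, e^{\lambda}, e^{-\lambda})$; hence one can pick a lattice in the $\C^{2m}$-factor generated (over $\mathbb{Z}$) by the standard integral basis, which is preserved by $M$ and by $M^{-1}$, and a lattice in the $\C^{2n}$-factor of the form $(\mathbb{Z}+\tau\mathbb{Z})^{?}$ scaled by the periods $\mu_j$ so that translation by a lattice vector induces exactly the automorphism $M$ (or a power of it) on the fibre. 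The compatibility condition between the two pieces — that the $\C^{2n}$-lattice vectors conjugate the $\C^{2m}$-lattice to itself — is what forces the discreteness parameters $\underline{\mu}$ and the conjugating matrix $P$ to appear. Once $\Gamma_{\underline{\mu},P}$ is shown to be a discrete, cocompact subgroup, $N_{\underline{\mu},P}$ is a compact solvmanifold, and the natural complex structure descends from the bi-invariant one on $\C^{2n}\times\C^{2m}$ because $\rho$ acts holomorphically; this gives the complex structures claimed.

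**For parts (2) and (3),** I would first write down explicitly a coframe of left-invariant $1$-forms on $G_{n,m}$ adapted to the splitting, compute $d$ on this coframe (the only nonzero structure equations come from $\rho$ and involve $\lambda$), and then exhibit an explicit invariant symplectic form $\omega$ built by pairing the $\C^{2n}$-directions among themselves and the $\C^{2m}$-directions in couples respecting the $e^{\lambda}$/$e^{-\lambda}$ pairing — exactly as in Theorem \ref{MainThm1}, the condition $\lambda + (-\lambda) = 0$ makes the relevant wedge $d$-closed. Although here the de Rham cohomology is \emph{not} the invariant one (Hattori's theorem does not apply), for the HLC I only need to verify the $dd^{\Lambda}$-lemma on the \emph{full} de Rham cohomology; I would use Kasuya's / Angella–Kasuya's machinery for manifolds of type $\C^{N}\ltimes_{\varphi} N$ to produce an explicit finite-dimensional subcomplex computing $H^{\bullet}_{dR}(N_{\underline{\mu},P})$, identify its generators, and check on representatives that $L^{k}\colon H^{n-k}\to H^{n+k}$ is an isomorphism (equivalently, by Mathieu's theorem, that $\Ker d^{\Lambda}\cap\im d=\im dd^{\Lambda}$). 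For (3) I would modify $J$ to a non-integrable almost-complex structure $\tilde J$ compatible with $\omega$ by a standard perturbation (e.g. flipping signs or rotating within a pair of fibre directions so the Nijenhuis tensor becomes nonzero), and then compute the canonical bundle and its pluricanonical sections to read off $\mathrm{kod}(\tilde J)=0$ — the point being that the canonical bundle is (holomorphically, or in the almost-complex sense of Chen–Zhang) trivialized by the invariant $(n{+}m,0)$-form, whose norm is bounded, so $h^0(K^{\otimes\ell})$ is $1$ for all $\ell\ge 1$, but no higher growth.

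**For part (4),** non-Kählerianity follows immediately from (3) once HLC alone is insufficient: I would invoke Hasegawa's theorem that a compact solvmanifold is Kähler iff it is a finite quotient of a complex torus, and check that $N_{\underline{\mu},P}$ is not such a quotient — concretely, the structure equations show $b_1(N_{\underline{\mu},P})$ is strictly smaller than $\dim_{\mathbb{R}}(G_{n,m}/[G_{n,m},G_{n,m}])$ would give for a torus, or equivalently the group is not virtually abelian since $\rho$ is nontrivial — contradicting the torus characterization. \textbf{The main obstacle} I anticipate is part (1): constructing the lattice $\Gamma_{\underline{\mu},P}$ and proving it is cocompact requires delicate number-theoretic bookkeeping (choosing the periods $\mu_j$ and the matrix $P$ so that the $\C^{2n}$-translations realize integer powers of $M$ while keeping the $\C^{2m}$-lattice invariant), and showing that the resulting de Rham cohomology is genuinely non-invariant — hence that Kasuya's twisted/Mostow-type criteria must be used — is where the argument departs most sharply from the nilmanifold and completely-solvable cases.
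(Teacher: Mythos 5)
Your overall strategy matches the paper's for parts (1)--(3), but two points deserve attention, and your part (4) takes a genuinely different (and valid) route.

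On part (1): you correctly identify the compatibility condition (the $\C^{2n}$-translations must preserve the fibre lattice $\Gamma''_P = P\mathbb{Z}^{2m}\oplus\sqrt{-1}P\mathbb{Z}^{2m}$), but you describe only half of the mechanism. The real integer translations act through $D$ and preserve $P\mathbb{Z}^{2m}$ because $DP=PM$ with $M\in\SL(2m,\mathbb{Z})$ --- that is your ``translations realize powers of $M$.'' The other half, which is where the parameters $\underline{\mu}$ actually enter, is that the \emph{imaginary} translations $\sqrt{-1}\mu_i e_i$ act by $\rho(\sqrt{-1}\mu_i e_i)=\mathrm{diag}(e^{\pm\sqrt{-1}\lambda\mu_i},\dots)$, a unitary matrix which preserves no lattice unless it is the identity; the paper therefore takes $\mu_i = 2k_i\pi/\lambda$ so that these factors are exactly $1$. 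Without pinning this down the lattice construction does not close up. Relatedly, the action $\rho$ must be chosen with the alternating $w_i$/$\bar w_i$ pattern (this is what makes $G_{n,m}$ non--completely-solvable and the cohomology non-invariant), and your phrase ``$\rho$ acts holomorphically'' should be read as: each $\rho(w')$ is $\C$-linear, so left translations are holomorphic and the standard $J$ of $\C^{2n}\times\C^{2m}$ is left-invariant --- the map $w'\mapsto\rho(w')$ itself is not holomorphic.

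On part (3): asserting that the canonical bundle of the perturbed structure $\tilde J$ is trivialized by the invariant $(2n+2m,0)$-form is exactly the point that needs proof, and it is not automatic. The paper's own Section 4 shows that for other $\omega$-compatible perturbations on a generalized Nakamura manifold one gets $\delbar\Omega_t\neq 0$ and Kodaira dimension $-\infty$; whether $\kappa=0$ or $-\infty$ hinges on whether $\delbar\Omega=0$ for the specific almost-complex structure chosen. The paper's $\mathcal{J}$ (built from $\beta^{2j+1}=\tfrac{\sqrt2}{2}(\psi^{2j+1}+\sqrt{-1}\psi^{2j+2})$, $\beta^{2j+2}=\tfrac{\sqrt2}{2}(\bar\psi^{2j+1}+\sqrt{-1}\bar\psi^{2j+2})$) is engineered so that the contributions to $\delbar\Omega$ from odd and even indices cancel; only then does $\delbar(F\Omega)=\delbar F\wedge\Omega$ force $F$ constant and $P_{r,\mathcal{J}}=1$ for all $r$. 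Your plan needs this verification to be complete. Finally, for part (4) the paper does not go through Hasegawa's classification as you propose (though it notes one could): it gives a direct argument, exhibiting a $d$-exact transverse $(2n+2m-1,2n+2m-1)$-form $\phi=\beta\wedge\bar\beta=d\theta$ with $\beta$ simple and deriving $0<\int_N\omega\wedge\beta\wedge\bar\beta=\int_N d(\omega\wedge\theta)=0$ by Stokes. Your route via Hasegawa plus the observation that $\Gamma_{\underline{\mu},P}$ is not virtually abelian (since $M$ has an eigenvalue $e^{\lambda}$ with $\lambda\neq0$) is also correct and arguably more conceptual; the paper's argument has the advantage of being self-contained and elementary.
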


For the proof of $(1)$ see section 3, in particular Theorem \ref{LatticeInGenNakamura}, while the proofs of $(2), (3)$ and $(4)$ can be also found in section 3, respectively in Theorem \ref{SimpletticheHLCNuove}, Propositions \ref{KodairaNuove} and \ref{NNotKahler}. Moreover, we compute generators for the de Rham and Dolbeault cohomologies making use of the previously introduced techniques and we observe that these manifolds do not satisfy the \textit{$\del\delbar$-lemma}. These computations can be found in section 3, Propositions \ref{GeneratoriDeRhamGenNakamura}, \ref{GeneratoriDolbeaultGenNakamura} and Remark \ref{NoDelDelBar}. The symplectic structures of theorem \ref{MainThm2} are all invariant; however, $N_{\underline{\mu},P}$ also admits non-invariant symplectic structures satisfying HLC. We show an explicit one on $N_{2,2} =\Gamma_{\underline{\mu},P} \backslash (\C^{2} \ltimes_{\rho} \C^{2}) $ in example \ref{ex:3}, but we do not carry out the general case as the amount of computations required rapidly increases with the dimension of $N_{\underline{\mu},P}$.\\[5pt]
The paper is structured as follows. In section 2 we fix some notation recalling definitions and well-known results of which we will make use through the rest of the paper: de Rham and Dolbeault cohomologies, symplectic manifolds, the \textit{Hard Lefschetz Condition} and equivalent properties, solvmanifolds and lattices in solvable Lie groups. Section 3 contains the main results of the paper. We begin by recalling the construction of the \textit{generalized Nakamura manifolds}, followed by the proof of Theorem \ref{MainThm1}. We then proceed with the study of $N_{\underline{\mu},P}$: we show the construction of $\C^{2n} \ltimes_{\rho} \C^{2m}$ and existence of lattices in this solvable Lie group and prove all the statements of our main Theorem \ref{MainThm2}. In section 4 we compute the almost-complex Kodaira dimension of almost-complex structures on the two types of manifolds we considered: we first show that there exist $\omega$-compatible deformations of an almost-K\"ahler structure on the generalized Nakamura manifold $\Gamma_{P,\tau} \backslash (\C \ltimes_{\rho} \C^{4})$ which admit, for certain values of the deformation parameter, almost-complex Kodaira dimension $-\infty$; after, we compute the almost-complex Kodaira dimension of an almost-K\"ahler structure on $N_{\underline{\mu},P}$. Finally, section 5 is devoted to some explicit examples and computations: we apply our results to the quotients of $\C \ltimes_{\rho} \C^{2}$ (which is the \textit{Nakamura manifold}), $\C \ltimes_{\rho} \C^{3}$ (where we show an effective way of seeing the splitting of the lattice), $\C^{2} \ltimes_{\rho} \C^{2}$. Moreover, we show how our assumptions on the eigenvalues of the matrices $M$ are not restrictive, providing in particular two examples (one for $\C \ltimes_{\rho} \C^{3}$ and one for $\C^{2n} \ltimes_{\rho} \C^{4}$).\\[10pt]
\textbf{Acknowledgments} The authors would like to thank Valentino Tosatti for useful comments and remarks. They also want to thank the referee for their accurate observations which led to an improvement of the paper. We are especially thankful for their remarks on the proof of theorem \ref{SimpletticheHLCGenNakamura}.

\section{Preliminaries}
In this section we start by fixing the notation and recalling some definitions and results on \textit{symplectic manifolds}, the \textit{Hard Lefschetz Condition} and the structure of \textit{solvmanifolds}.

\subsection{Notation} The pair $(M,J)$ will denote an almost complex manifold, that is a $2n$-dimensional smooth manifold endowed with a (smooth) $(1,1)$-tensor field $J$ such that $J^2=-id$. The almost complex structure $J$ is said to be integrable if it is induced by the structure of a complex manifold. In view of the \textit{Newlander and Nirenberg Theorem} $J$ is integrable if and only if the Nijenhuis tensor $N_J$ of $J$, defined as
$$
N_J(V,W)=[JV,JW]-[V,W]-J[JV,W]-J[V,JW]
$$
vanishes. In such a case $J$ is a complex structure and $(M,J)$ is a complex manifold. The space of real $k$-forms on $M$, respectively of bi-graded $(p,q)$-forms will be denoted by $A^{k}(M)$, respectively $A^{p,q}(M)$. We recall that
\[A^{k}(M; \mathbb{C}) = \underset{p + q = k}{\bigoplus}A^{p,q}(M)\]
where $A^{k}(M; \mathbb{C})$ denotes the space of complex-valued $k$-forms on $M$ and that on a complex manifold $(M,J)$ the exterior derivative $d$ decomposes as $d = \partial + \overline{\partial}$, where $\partial$ and $\overline{\partial}$ are the projection of $d(A^{k}(M; \mathbb{C}))$ over $A^{p+1,q}(M)$ and $A^{p,q+1}(M)$, respectively.\\ 
We finally fix the notation for the \textit{de Rham} and \textit{Dolbeault} \textit{cohomologies} of a complex manifold $(M,J)$. The \textit{de Rham cohomology group of degree $k$} of $M$ is 
\[ H^{k}_{d}(M) = \dfrac{\Ker(d: A^{k}(M) \rightarrow A^{k+1}(M))}{\im(d: A^{k-1}(M) \rightarrow A^{k}(M))}\]
while the \textit{Dolbeault cohomology group of bi-degree $(p,q)$} is given by
\[H^{p,q}_{\overline{\partial}}(M) = \dfrac{\Ker(\overline{\partial}: A^{p,q}(M) \rightarrow A^{p,q+1}(M))}{\im(\overline{\partial}: A^{p,q-1}(M) \rightarrow A^{p,q}(M))}.\]
We also set
\[H^{*}_{d}(M) = \bigoplus_{k}H^{k}_{d}(M) \qquad H^{*}_{\delbar}(M) = \bigoplus_{p,q}H^{p,q}_{\delbar}(M)\]
and
\[b_{k}(M) = \text{dim}H^{k}_{d}(M) \qquad h_{p,q}(M) = \text{dim}H^{p,q}_{\delbar}(M)\]
for the \textit{Betti numbers} and \textit{Hodge numbers}.

\subsection{Symplectic manifolds and the \textit{Hard Lefschetz Condition}} We will denote by $(M, \omega)$ a symplectic manifold, where $M$ is a $2n$-dimensional manifold and $\omega$ is a non-degenerate, $d$-closed 2-form on $M$. Let $(M,\omega)$ be a symplectic manifold. An almost complex structure $J$ on $M$ is said to be $\omega$-{\em compatible} if, given any $x\in M$, for every $u,v,w\in T_xM$, with $u\neq 0$, the following conditions hold 
$$
\omega_x(u, J_xu) > 0,\qquad
\omega_x(J_xv,J_xw) = \omega_x(v,w).
$$
In such a case, given any pair of vector fields $X,Y\in\Gamma(TM)$, 
$$g_x(X_x,Y_x) = \omega_x(X_x, J_xY_x)$$ defines a $J$-Hermitian metric on $M$.
In particular, every \textit{K\"ahler manifold} is symplectic, with the symplectic form given by the fundamental form of the K\"ahlerian metric and the complex structure is $\omega$-compatible.\\[5pt]
Given a symplectic manifold $(M, \omega)$, the \textit{Lefschetz operator} on $M$ is the operator defined by

\begin{equation}\label{DefLefschetz}
     L : A^{k}(M) \rightarrow A^{k+2}(M) \qquad \alpha \mapsto \omega \wedge \alpha.
\end{equation}
Its dual operator is denoted by
\[ \Lambda: A^{k}(M) \rightarrow A^{k-2}(M). \]
Given these two operators, it is also possible to define the $d^{\Lambda}$ operator, which is given by
\begin{equation}\label{dLambda}
    d^{\Lambda}: A^{k}(M) \rightarrow A^{k-1}(M) \qquad d^{\Lambda} = d\Lambda - \Lambda d. 
\end{equation}
Since $\omega$ is closed, the Lefschetz operator induces a well-defined operator on the de Rham cohomology of $M$ by
\begin{equation}\label{DefLefschetzRham}
        L^{n-k} : H^{k}_{d}(M) \rightarrow H^{2n-k}_{d}(M) \qquad [\alpha] \mapsto [\omega^{n-k} \wedge \alpha].
\end{equation}
The symplectic manifold $(M, \omega)$ is said to satisfy the \textit{Hard Lefschetz Condition} if the Lefschetz operator on the de Rham cohomology of $M$ is an isomorphism for all $k = 0, ..., n$. Moreover, $(M, \omega)$ is said to satisfy the \textit{$dd^{\Lambda}$-lemma} if 
\begin{equation*}
    \Ker(d^{\Lambda}) \cap \im(d) = \im(dd^{\Lambda}).
\end{equation*}
In particular, the following facts are equivalent on a compact symplectic
manifold $(M^{2n},\omega)$ (see \cite{Brylinski}, \cite{Mathieu}, \cite{merkulov},
\cite{Yan}, \cite{cavalcanti})
\begin{itemize}
\item{} the Hard Lefschetz Condition  holds;
\item{} the Brylinski conjecture holds;
\item{} the \emph{$dd^\Lambda$-lemma} holds;
\item{} the natural maps induced by the identity $H^{\bullet}_{d+d^\Lambda}(M)
\longrightarrow H^{\bullet}_{d}(M)$ are injective;
\item{} the natural maps induced by the identity $H^{\bullet}_{d+d^\Lambda}(M)
\longrightarrow H^{\bullet}_{d}(M)$ are surjective;
\item{} the natural maps induced by the identity in the following diagram are isomorphisms
$$ \xymatrix{
  & H^{\bullet}_{d+d^\Lambda}(M) \ar[ld]\ar[rd] & \\
  H^{\bullet}_{d}(M) \ar[rd] &  & H^{\bullet}_{d^\Lambda}(M). \ar[ld] \\
  & {\phantom{\;.}} H^{\bullet}_{dd^\Lambda}(M) \; &
} $$
\end{itemize}

where 
\[H^{k}_{d+d^{\Lambda}}(M) = \dfrac{\ker d+d^{\Lambda} \cap A^{k}(M)}{\im dd^{\Lambda} \cap A^{k}(M)} \qquad H^{k}_{dd^{\Lambda}}(M) = \dfrac{\ker dd^{\Lambda} \cap A^{k}(M)}{(\im d+\im d^{\Lambda}) \cap A^{k}(M)}\]
are the symplectic cohomologies introduced by Tseng and Yau in \cite{TY}. 

%
%
%
%
%

\subsection{Solvmanifolds} We now recall the definition of \textit{solvmanifold} and some properties of \textit{lattices} in solvable Lie groups. 
By a {\em solvmanifold} we mean a compact quotient $M=\Gamma \backslash G$ of a connected and simply-connected solvable Lie group $G$ by a lattice $\Gamma$. In particular, $\pi_{1}(\Gamma \backslash G) \simeq \Gamma$. 
We recall two results, the first one due to Mostow, describing the structure of solvmanifolds whose Lie groups $G$ are simply connected. Both theorems can be found in \cite[Theorem 3.6 and Corollary to Proposition 3.7]{Raghunathan}.

\begin{teorema}\label{MostowFundGroups}
    Let $G_{1}$, $G_{2}$ be two simply connected solvable Lie groups and let $\Gamma_{1}$, $\Gamma_{2}$ be lattices in $G_{1}$ and $G_{2}$, respectively. If $\Gamma_{1}$ and $\Gamma_{2}$ are isomorphic then the corresponding solvmanifolds $\Gamma_{1} \backslash G_{1}$ and $\Gamma_{2} \backslash G_{2}$ are diffeomorphic.
\end{teorema}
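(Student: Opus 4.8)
The plan is to split the argument into an easy homotopy-theoretic step and a genuinely hard rigidity step. For the first step, I would recall that a connected, simply connected solvable Lie group $G$ is diffeomorphic to $\mathbb{R}^{\dim G}$: iterating the fact that such a $G$ has a closed connected normal subgroup $H$ with $G/H\cong\mathbb{R}$ (so $G\cong H\times\mathbb{R}$ as a manifold), one sees that $G$ is contractible. Consequently, for $i=1,2$ the quotient map $G_i\to M_i:=\Gamma_i\backslash G_i$ is a universal covering, $\Gamma_i$ acts freely and properly discontinuously (so $\Gamma_i$ is torsion-free), and $M_i$ is a closed aspherical manifold with $\pi_1(M_i)\cong\Gamma_i$. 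Since aspherical CW complexes are Eilenberg--MacLane spaces $K(\Gamma_i,1)$, the given isomorphism $\phi\colon\Gamma_1\to\Gamma_2$ is induced by a homotopy equivalence $f\colon M_1\to M_2$, unique up to homotopy; in particular $M_1\simeq M_2$.

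The content of the theorem is to promote $f$ to a diffeomorphism, and here I would follow Mostow's structural analysis via the nilradical fibration. Let $N_i$ be the nilradical of $G_i$; then $G_i/N_i\cong\mathbb{R}^{k_i}$ is abelian and simply connected, $\Gamma_i\cap N_i$ is a lattice in $N_i$, the subgroup $\Gamma_i N_i$ is closed, and $M_i$ is the total space of a smooth fiber bundle over the torus $T^{k_i}$ with fiber the compact nilmanifold $(\Gamma_i\cap N_i)\backslash N_i$. The crucial algebraic point is that this structure is visible to the abstract group $\Gamma_i$, i.e.\ $\phi(\Gamma_1\cap N_1)=\Gamma_2\cap N_2$; this is part of Mostow's structure theory for lattices in solvable Lie groups (it can be phrased through the functorial algebraic hull of $\Gamma_i$, where $\Gamma_i\cap N_i$ is recovered as the intersection of $\Gamma_i$ with a unipotent radical, or via Auslander's nilshadow). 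Granting this, $\phi$ descends to an isomorphism of the free abelian groups $\Gamma_1/(\Gamma_1\cap N_1)\to\Gamma_2/(\Gamma_2\cap N_2)$, so $k_1=k_2$ and $\dim G_1=\dim G_2$ (the latter being the common Hirsch length of $\Gamma_1\cong\Gamma_2$), and by Mal'cev's rigidity theorem the restriction $\phi\colon\Gamma_1\cap N_1\to\Gamma_2\cap N_2$ extends to a unique Lie group isomorphism $N_1\to N_2$, hence to an affine diffeomorphism of the fibers.

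It then remains to assemble these pieces into a fiber-preserving, $\phi$-equivariant diffeomorphism $\widetilde f\colon G_1\to G_2$ (meaning $\widetilde f(\gamma x)=\phi(\gamma)\widetilde f(x)$) covering the linear identification of the bases and restricting to the Mal'cev isomorphism on fibers; such an $\widetilde f$ descends to the required diffeomorphism $M_1\to M_2$, which by construction induces $\phi$ on $\pi_1$. I expect this last step to be the main obstacle, and it is the reason the theorem is not a formality: unlike in the nilpotent case, an abstract isomorphism of lattices in simply connected solvable Lie groups need not be the restriction of a Lie group isomorphism, and the ambient groups $G_1$ and $G_2$ need not even be isomorphic. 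One must therefore build $\widetilde f$ by hand and verify that the discrepancy between the two monodromy representations $\Gamma_i/(\Gamma_i\cap N_i)\to\operatorname{Out}(\Gamma_i\cap N_i)$, once transported through $\phi$ and the Mal'cev isomorphism, can be conjugated away, so that the two bundles become isomorphic as smooth fiber bundles even though their structure groups (and the ambient solvable Lie groups) may differ. A slicker but less self-contained alternative would be to invoke topological rigidity of closed aspherical manifolds with torsion-free polycyclic fundamental group together with smoothing theory; but the fibered construction sketched above is the proof I would write out in detail.
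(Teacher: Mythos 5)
The paper does not actually prove this statement: it is quoted as a classical theorem of Mostow, with the reference to Raghunathan, Theorem 3.6, so your argument has to be judged as a proof of Mostow's theorem itself. Your first step (contractibility of a simply connected solvable $G$, asphericity of $\Gamma\backslash G$, and the homotopy equivalence realizing $\phi$) is correct and standard.

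The second step, however, rests on a false claim. It is not true that $\phi(\Gamma_1\cap N_1)=\Gamma_2\cap N_2$: the subgroup $\Gamma\cap N$ is not an invariant of the abstract group $\Gamma$. Take $G_1=\mathbb{R}\ltimes_{\varphi}\mathbb{R}^2$ with $\varphi(t)$ the rotation by angle $2\pi t$, and $\Gamma_1=\mathbb{Z}\ltimes\mathbb{Z}^2$; since $\varphi(1)=\mathrm{id}$, one has $\Gamma_1\cong\mathbb{Z}^3$ as an abstract group, while $N_1=\mathbb{R}^2$ and $\Gamma_1\cap N_1\cong\mathbb{Z}^2$. Compare with $G_2=\mathbb{R}^3$, $\Gamma_2=\mathbb{Z}^3$, where $N_2=G_2$ and $\Gamma_2\cap N_2=\Gamma_2$. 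Under any isomorphism $\phi\colon\Gamma_1\to\Gamma_2$ the image of $\Gamma_1\cap N_1$ has infinite index in $\Gamma_2\cap N_2$, and $k_1=1\neq 0=k_2$, so the two Mostow fibrations cannot be matched fiberwise; yet both quotients are (and, by the theorem, must be) the $3$-torus. The functorial algebraic hull recovers the discrete nilradical of $\Gamma$ (here all of $\mathbb{Z}^3$), which contains but in general strictly contains $\Gamma\cap N$, so the parenthetical justification does not repair the step. This is precisely the phenomenon the theorem exists to handle --- the same lattice can sit in non-isomorphic simply connected solvable groups whose nilradical fibrations have bases of different dimensions --- and it is also exactly how the paper uses the theorem in Example 5.2. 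Beyond this, the genuinely hard part, the construction of the $\phi$-equivariant diffeomorphism $\widetilde f$, is only announced, not carried out, as you yourself acknowledge. The standard proof (Mostow; Raghunathan, Theorem 3.6) avoids comparing the two fibrations altogether, working instead with the graph of $\phi$ as a discrete subgroup of $G_1\times G_2$ together with cohomology-vanishing results for lattices in solvable Lie groups. As written, the proposal does not prove the statement.
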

\begin{teorema}
    A lattice $\Gamma$ in a connected solvable Lie group $G$ is finitely generated.
\end{teorema}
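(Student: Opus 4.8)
The plan is to deduce this from the standard fact that the fundamental group of a closed manifold is finitely generated, the only real work being to remove the hypothesis that $G$ be simply connected. First I would recall that a lattice in a solvable Lie group is automatically uniform (Mostow's theorem, see \cite{Raghunathan}), so that $\Gamma$ acts on $G$ freely and properly discontinuously with compact quotient; thus $\Gamma\backslash G$ is a compact smooth manifold. Solvability of $G$ is used only here: for a uniform lattice in an arbitrary connected Lie group the argument below is unchanged.

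Next I would treat the simply connected case, which is the heart of the statement. If $G$ is simply connected it is diffeomorphic to $\mathbb{R}^{\dim G}$, hence contractible, so $G\to\Gamma\backslash G$ is the universal covering and $\pi_1(\Gamma\backslash G)\cong\Gamma$. Being the fundamental group of a compact manifold, $\Gamma$ is then finitely generated: for instance one covers $\Gamma\backslash G$ by finitely many Euclidean balls and applies the van Kampen theorem inductively, or invokes that a smooth compact manifold has the homotopy type of a finite CW complex.

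For general connected $G$, I would pass to the universal covering $p\colon \widetilde G\to G$ and set $Z:=\ker p=\pi_1(G)$, a discrete central subgroup of $\widetilde G$. Since a connected Lie group retracts onto its maximal compact subgroup, and for solvable $G$ the latter is a torus $T^k$, we get $Z\cong\mathbb{Z}^k$; in particular $Z$ is finitely generated. Put $\widetilde\Gamma:=p^{-1}(\Gamma)$. A direct check shows that $\widetilde\Gamma$ is a discrete subgroup of $\widetilde G$ containing $Z$, that $\widetilde\Gamma/Z\cong\Gamma$, and that $p$ descends to a diffeomorphism $\widetilde\Gamma\backslash\widetilde G\xrightarrow{\ \sim\ }\Gamma\backslash G$; hence $\widetilde\Gamma$ is a uniform lattice in the simply connected solvable group $\widetilde G$. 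By the previous paragraph $\widetilde\Gamma$ is finitely generated, and therefore so is its quotient $\Gamma=\widetilde\Gamma/Z$.

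Equivalently, and perhaps more cleanly, one observes that $\widetilde G$ is the universal cover of the compact manifold $\Gamma\backslash G$ with deck group $\widetilde\Gamma\cong\pi_1(\Gamma\backslash G)$, so $\widetilde\Gamma$ is finitely generated for free, and $\Gamma=\widetilde\Gamma/Z$ inherits this. The only non-formal ingredients are Mostow's uniformity theorem and the finite generation of $\pi_1(G)$; everything else is covering-space bookkeeping, so I do not expect any serious obstacle.
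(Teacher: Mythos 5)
Your argument is correct, but it takes a genuinely different route from the paper, which does not prove this statement at all: it is quoted from Raghunathan's book (Corollary to Proposition 3.7 in \cite{Raghunathan}), where the proof is algebraic --- one intersects $\Gamma$ with the nilradical $N$ of $G$, shows $\Gamma\cap N$ is a lattice in the nilpotent group $N$ and $\Gamma N/N$ a lattice in the abelian group $G/N$, and concludes that $\Gamma$ is polycyclic, which is stronger than finite generation. Your proof is topological: Mostow's cocompactness theorem makes $\Gamma\backslash G$ a compact manifold, the simply connected case follows from $\pi_1(\Gamma\backslash G)\cong\Gamma$ (here only simple connectivity of $G$ is needed, not contractibility), and the general case reduces to it by lifting to the universal cover $\widetilde G$, where $\widetilde\Gamma=p^{-1}(\Gamma)$ is discrete and cocompact and $\Gamma=\widetilde\Gamma/Z$; note that you do not actually need $Z\cong\mathbb{Z}^k$, since any quotient of a finitely generated group is finitely generated. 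The bookkeeping is all in order. The trade-offs: your route leans on Mostow's cocompactness theorem (itself a substantial result from the same chapter of \cite{Raghunathan}) but works verbatim for uniform lattices in arbitrary connected Lie groups and yields finite presentability for free, whereas the algebraic route avoids cocompactness, stays within group theory, and delivers the sharper structural conclusion that $\Gamma$ is polycyclic.
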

We recall some results for the computation of the de Rham cohomology of solvmanifolds. Let $\Gamma \backslash G$ be a solvmanifold and let $\mathfrak{g}$ be the Lie algebra of $G$. It is well-known that, differently from \textit{nilmanifolds}, it is not true in general that $H^{*}(\Gamma \backslash G) \simeq H^{*}(\mathfrak{g})$. For solvmanifolds this is true if and only $\Gamma \backslash G$ satisfies the \textit{Mostow condition}. In \cite{Hattori}, A. Hattori proved that this holds in the case where $G$ is of \textit{completely solvable type} i.e., the adjoint action of $\mathfrak{g}$ on itself has only real eigenvalues. In the general case, the cohomologies of a solvmanifold $\Gamma \backslash G$ cannot be computed using only invariant forms. There are however some useful results for computing the de Rham and Dolbeault cohomologies of solvmanifolds due to H. Kasuya which we will later use and can be found in \cite{Kasuya}.

\subsection{Kodaira dimension of almost-complex manifolds} Let $(M, J)$ be an almost-complex manifold of dimension $n$. The \textit{Kodaira dimension} $\kappa^{J}(M)$ of $(M, J)$ was defined by H. Chen and W. Zhang in \cite{CZ} (see also \cite{CZ1} and \cite{CNT}, \cite{CNT1} and \cite{CNT2}). They defined $\kappa^{J}(M)$ as:
\[\kappa^{J}(M) = \begin{cases}
    - \infty & \text{if } P_{r, \mathcal{J}} = 0 \quad \forall r \ge 0 \\
    \limsup_{r \rightarrow +\infty}\dfrac{\log P_{r,\mathcal{J}}}{\log r} & \text{otherwise }\\
\end{cases}\]
where $P_{r, \mathcal{J}} := \text{dim}H^{0}(M, (\Lambda^{n,0}(M))^{\otimes r})$ are the \textit{plurigenera} of $(M, J)$.

\section{Non-K\"ahler solvmanifolds satisfying the Hard Lefschetz Condition}
In this section we prove the main theorems stated in the introduction. We begin by recalling the construction of the \textit{generalized Nakamura manifolds} as in \cite{CT} and proving that they carry symplectic structures satisfying the \textit{Hard Lefschetz Condition}. Then, we construct the quotients of $\C^{2n} \ltimes_{\rho} \C^{2m}$ and we prove all the claims of our main Theorem \ref{MainThm2}.

\subsection{Hard Lefschetz Condition on generalized Nakamura manifolds}
We recall the construction of the \textit{generalized Nakamura manifolds} provided in \cite{CT}. We show that, under some new hypotheses, that these manifolds carry invariant symplectic structures. \\
Let $M \in \SL(n,\mathbb{Z})$ be a matrix with positive eigenvalues $\{e^{\lambda_{1}}, ..., e^{\lambda_{n}}\}$ such that
\begin{equation}
	PMP^{-1} = \hbox{diag}(e^{\lambda_{1}}, ..., e^{\lambda_{n}}) =: D
\end{equation}
with $P \in \GL(n,\mathbb{R})$. Since $M \in \SL(n, \mathbb{Z})$, we have
\[\sum_{i = 1}^n \lambda_i = 0.\]
Define the action of $\C$ over $\C^{n}$ given by
\[\rho : \mathbb{C} \rightarrow GL(n, \mathbb{C}) \qquad w \longmapsto \hbox{diag}(e^{\frac{1}{2}\lambda_{1}(w + \overline{w})}, e^{\frac{1}{2}\lambda_{2}(w + \overline{w})}, ..., e^{\frac{1}{2}\lambda_{n}(w + \overline{w})}).\]
Consider now the well-defined Lie group (not complex)
\[  (G_{M}, *) := \mathbb{C} \ltimes_{\rho} \mathbb{C}^{n} \]
where $*$ is explicitly given by
\[ (w', z'_{1}, \ldots, z'_{n}) * (w, z_{1}, \ldots, z_{n}) = (w' + w, \rho(w')(z_{1}, \ldots, z_{n}) + (z'_{1}, \ldots, z'_{n})). \]
As was proved in \cite[subsection 4.5]{CT}, $(G_{M}, *)$ is a 2-step solvable non-nilpotent Lie group. Moreover, it is of \textit{completely solvable type}, as the adjoint action of $\mathfrak{g}$ on itself results in a lower triangular matrix.
We now recall how to obtain compact quotients of $(G_{M}, *)$. Fix $\tau \in \mathbb{R} \setminus \{0\}$ and consider the following \textit{lattices} in $\mathbb{C}$ and $\mathbb{C}^{n}$, respectively:
\begin{equation}\label{LatticesInNakamura}
    \Gamma'_{\tau} := \mathbb{Z} \oplus \tau\sqrt{-1}\cdot\mathbb{Z} \qquad
    \Gamma''_{P} := P\mathbb{Z}^{n} \oplus \sqrt{-1}P\mathbb{Z}^{n}.
\end{equation}
Then
\[ \Gamma_{P, \tau} :=  \Gamma'_{\tau} \ltimes_{\rho} \Gamma''_{P}\]
is a lattice in $(G_{M}, *)$ which makes
\begin{equation}\label{DefinizioneGenNakamura}
N_{M, P, \tau} = \Gamma_{P, \tau} \backslash (\mathbb{C} \ltimes_{\rho} \mathbb{C}^{n})
\end{equation}
a solvmanifold}, called the \textit{Generalized Nakamura manifold} associated to the triple $(M, P, \tau)$.

\begin{osservazione}\label{LatticeSpezza}
    The case $n = 2m+1$, $\lambda_{2m+1} = 0$ corresponds to a translation of the last coordinate of $\C^{2m+1}$. Then $$\Gamma_{P, \tau} \simeq (\Gamma'_{\tau} \ltimes_{\rho} \Gamma''_{Q}) \times \Gamma^{'''}_{a,b}$$ as groups, where $\Gamma^{'''}_{a,b} = a\mathbb{Z} \oplus \sqrt{-1}\cdot b\mathbb{Z}$ for some $a,b \in \mathbb{R}\setminus \{0\}$. Thus, the corresponding manifold $N$ splits as
    \[N \simeq N^{'} \times \mathbb{T}^{1}_{\mathbb{C}}\]
    where $N'$ is a generalized Nakamura manifold of dimension $n = 2m$ and $\mathbb{T}^{1}_{\mathbb{C}}$ is a complex torus. 
\end{osservazione}
The forms
\begin{equation}
    \varphi^0 = dw\, \qquad \varphi^i = e^{-\frac{1}{2}\lambda_i (w + \bar{w})} dz_i \qquad i = 1, \ldots, n
\end{equation}
give a well-defined set of $(1,0)$-forms on $N_{M, P, \tau}$. A straightforward calculation then shows that the following structure equations hold:
\begin{equation}\label{StrEqn}
    d\varphi^{0} = 0 \qquad d\varphi^{i} = -\dfrac{1}{2}\lambda_{i}(\varphi^{0} + \bar\varphi^{0}) \wedge \varphi^{i} \quad i = 1, ..., n.
\end{equation}
The structure equations also show that $N_{M, P, \tau}$ is a compact complex manifold which is not holomorphically parallelizable.
\begin{osservazione}
    If $\lambda_{i} = 0 $ $\forall i $, then the manifold $N$ is just a complex torus. Starting from theorem \ref{SimpletticheHLCGenNakamura}, we will assume there exists at least one index $j$ such that $\lambda_{j} \ne 0$ in order to exclude the torus, as in \cite{N} and \cite{CT}.
\end{osservazione}
We now prove that the manifolds $N = N_{M, P, \tau}$ carry invariant symplectic structures. Moreover, each one of these symplectic structures satisfies the \textit{Hard Lefschetz Condition}. If the action $\rho$ is non trivial i.e., there exists at least one $\lambda_{i} \ne 0$, the generalized Nakamura manifolds $N_{M, P, \tau}$ provide families of compact complex symplectic non-K\"ahler solvmanifolds satisfying the Hard Lefschetz condition and the $\del\delbar$-lemma, as was proved in \cite[Theorem 4.14]{CT}. We are ready to prove Theorem \ref{MainThm1}:
\begin{teorema}\label{SimpletticheHLCGenNakamura}
    \setlength\itemsep{1em}
    Let $N = N_{M, P, \tau}$ be a generalized Nakamura manifold of complex dimension $n + 1$. 
    \begin{enumerate}
   \item If $n = 2m$, then $N$ admits invariant symplectic structures if and only if there exists a partition $\mathcal{P} = \{(i_1, i_2), ..., (i_{2m-1}, i_{2m})\}$ of the set $\{1, ..., 2m\}$ into disjoint pairs such that 
   \[\lambda_i + \lambda_j = 0 \text{ for every } (i, j) \in \mathcal{P}.\]
    \item If $n = 2m+1$ then $N$ admits invariant symplectic structures if and only if there exists a partition $\mathcal{P} = \{(i_1, i_2), ..., (i_{2m-1}, i_{2m}), i_{2m+1}\}$ of the set $\{1, ..., 2m+1\}$ into disjoint pairs plus one more index such that 
    \[\lambda_i + \lambda_j = 0 \text{ for every } (i, j) \in \mathcal{P} \text{ and } \lambda_{i_{2m+1}} = 0.\]
    \end{enumerate}
    Moreover, each of these symplectic structures satisfies the Hard Lefschetz Condition.
\end{teorema}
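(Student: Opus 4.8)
The plan is to push everything down to the Chevalley--Eilenberg complex $\Lambda^\bullet\mathfrak g^*$. Since $(G_M,*)$ is of completely solvable type, Hattori's theorem gives $H^\bullet_d(N)\cong H^\bullet(\mathfrak g)$, so both ``$N$ carries an invariant symplectic structure'' and ``that structure satisfies HLC'' become linear-algebra statements on $\Lambda^\bullet\mathfrak g^*$. The organizing device is the weight grading: let $\mathcal N$ be the derivation of $\Lambda^\bullet\mathfrak g^*$ with $\mathcal N\varphi^0=\mathcal N\bar\varphi^0=0$, $\mathcal N\varphi^i=\lambda_i\varphi^i$, $\mathcal N\bar\varphi^i=\lambda_i\bar\varphi^i$; it equals $\mathcal L_X$ for the invariant field $X$ with $\varphi^0(X)=\bar\varphi^0(X)=-1$, $\varphi^i(X)=0$, hence commutes with $d$. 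The structure equations \eqref{StrEqn} give $d\mu=-\tfrac12 w\,(\varphi^0+\bar\varphi^0)\wedge\mu$ for any monomial $\mu$ of weight $w$; thus $d$ vanishes on the weight-$0$ part, while each weight-$w\neq0$ subcomplex is acyclic (contract by $\tfrac1w\iota_X$). Consequently $H^\bullet(\mathfrak g)$ is, as a graded vector space, the space of weight-$0$ invariant forms equipped with the zero differential.

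For the ``only if'' direction I would write an invariant $2$-form as $\omega=\omega_0+\omega_1+\omega_2$ according to how many of $\varphi^0,\bar\varphi^0$ it contains ($\omega_1=\varphi^0\wedge\alpha+\bar\varphi^0\wedge\beta$, with $\alpha,\beta$ in the span of the $\varphi^i,\bar\varphi^i$). Since $d\omega_1$ and $d\omega_2$ land in complementary subspaces of $\Lambda^3\mathfrak g^*$, $d\omega=0$ forces $d\omega_1=d\omega_2=0$: the first means $\alpha-\beta$ lies in the span of the $\varphi^i,\bar\varphi^i$ with $\lambda_i=0$, the second means $\omega_2$ is weight-$0$, i.e. every $\varphi^i\wedge\varphi^j$, $\varphi^i\wedge\bar\varphi^j$, $\bar\varphi^i\wedge\bar\varphi^j$ occurring in it has $\lambda_i+\lambda_j=0$. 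Expanding $\omega^{n+1}$, only $(n+1)\,\omega_0\wedge\omega_2^{n}$ and $\binom{n+1}{2}\,\omega_1^2\wedge\omega_2^{n-1}$ survive, and non-degeneracy then forces $\omega_2$ to be non-degenerate on a subspace $W\subseteq V:=\langle\varphi^i,\bar\varphi^i\rangle$ of codimension $0$ or $2$ which is a sum of weight spaces. Block-decomposing this weight-$0$ form shows the weight-$t$ and weight-$(-t)$ subspaces of $W$ must be perfectly paired for every $t>0$, so $\#\{i:\lambda_i=t\}=\#\{i:\lambda_i=-t\}$; combined with $\sum_i\lambda_i=0$ this forces $n\equiv\#\{i:\lambda_i=0\}\pmod2$, and yields exactly the partition of (1) when $n=2m$ and of (2) when $n=2m+1$, the leftover index carrying eigenvalue $0$.

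The converse is a direct construction: given the partition, put $\omega=\sqrt{-1}\,\varphi^0\wedge\bar\varphi^0+\sum_k\big(\varphi^{i_{2k-1}}\wedge\bar\varphi^{i_{2k}}+\bar\varphi^{i_{2k-1}}\wedge\varphi^{i_{2k}}\big)$, adding $\sqrt{-1}\,\varphi^{i_{2m+1}}\wedge\bar\varphi^{i_{2m+1}}$ in the odd case; reality is clear, closedness holds because every monomial has weight $0$, and $\omega^{n+1}$ collapses to a single multinomial term which is a nonzero multiple of $\varphi^0\wedge\bar\varphi^0\wedge\varphi^1\wedge\bar\varphi^1\wedge\dots\wedge\varphi^n\wedge\bar\varphi^n$. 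For the ``moreover'', take any invariant symplectic $\omega$: its non-zero-weight components are exact (e.g. $(\varphi^0+\bar\varphi^0)\wedge\gamma$ is $d$ of an explicit primitive), so $\omega$ is cohomologous to its weight-$0$ part $\omega^{(0)}$, which is still symplectic because $(\omega^{(0)})^{n+1}$ is a constant multiple of the volume form and $[(\omega^{(0)})^{n+1}]=[\omega^{n+1}]\neq0$; hence the Lefschetz maps on $H^\bullet(\mathfrak g)$ induced by $\omega$ and by $\omega^{(0)}$ coincide. Now $H^\bullet(\mathfrak g)$, being the weight-$0$ part, factors as $\Lambda^\bullet U^*\otimes\mathcal B$ with $U=\langle\varphi^0,\bar\varphi^0\rangle\oplus\langle\varphi^i,\bar\varphi^i:\lambda_i=0\rangle$ and $\mathcal B$ the weight-$0$ subalgebra of $\Lambda^\bullet$ of the remaining (twisted) generators; being weight-$0$, $\omega^{(0)}$ has no cross terms between the factors and splits as $\omega_U+\omega_B$, with $\omega_U$ symplectic on $U$ and $\omega_B$ symplectic on the span of the twisted generators. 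The Lefschetz $\mathfrak{sl}_2$-triple of $\omega_U$ on $\Lambda^\bullet U^*$ satisfies hard Lefschetz by classical linear algebra; that of $\omega_B$ commutes with $\mathcal N$, hence restricts to $\mathcal B$ and makes it an $\mathfrak{sl}_2$-module, again satisfying hard Lefschetz; and the tensor-product module $\Lambda^\bullet U^*\otimes\mathcal B$ with operator $L_U\otimes1+1\otimes L_B=\omega^{(0)}\wedge(\,\cdot\,)$ then satisfies hard Lefschetz with total degree shift $n+1$. This is precisely the HLC for $\omega$.

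I expect the main obstacle to be the non-degeneracy bookkeeping in the ``only if'' step: isolating the surviving terms of $\omega^{n+1}$ and, in the case $\omega_0=0$, correctly identifying the codimension-$2$ subspace $W$ and verifying that $\omega_2|_W$ genuinely block-decomposes so that the eigenvalue balance $\#\{i:\lambda_i=t\}=\#\{i:\lambda_i=-t\}$ is really forced. A secondary technical point is checking, in the HLC step, that the decomposition $H^\bullet(\mathfrak g)=\Lambda^\bullet U^*\otimes\mathcal B$ is compatible as $\mathfrak{sl}_2$-modules with $\omega^{(0)}=\omega_U+\omega_B$ and that the grading bookkeeping returns exactly the shift $n+1$, so that the classical tensor-product hard-Lefschetz argument applies verbatim.
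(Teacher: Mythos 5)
Your proposal is correct and takes a genuinely different, more structural route than the paper. The paper works directly with the explicit list of invariant generators of $H^{\bullet}_{d}(N)$ from \cite[Proposition 4.12]{CT}: for the ``only if'' direction it argues that a non-degenerate class in $H^{2}_{d}(N;\mathbb{R})$ must involve pairs $(i,j)$ with $\lambda_i+\lambda_j=0$ covering all indices, and for HLC it expands $\omega^{k}$ and checks by hand that $L^{k}$ carries the listed generators of $H^{2m+1-k}_{d}(N)$ to independent generators of $H^{2m+1+k}_{d}(N)$. You instead organize everything by the weight grading $\mathcal N=\mathcal L_X$, prove that $H^{\bullet}(\mathfrak g)$ is the weight-zero subalgebra with zero differential (the acyclicity of the nonzero-weight subcomplexes via the homotopy $\tfrac1w\iota_X$ is correct and recovers the paper's generator list), and then deduce HLC from the classical linear-algebra hard Lefschetz theorem for the non-degenerate weight-zero representative $\omega^{(0)}$ acting on $\Lambda^{\bullet}U^{*}\otimes\mathcal B$, via the tensor product of $\mathfrak{sl}_2$-modules. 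This buys a cleaner and arguably more airtight HLC proof --- the paper's independence claim for the images of the generators is asserted rather than argued --- at the cost of setting up the $\mathfrak{sl}_2$ formalism; your degree bookkeeping does return the correct shift $n+1$.

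The point you flag as the main obstacle is indeed the only loose end, and it can be closed. In the codimension-$2$ case ($\omega_0=0$, $\operatorname{rk}\omega_2=2n-2$) the pairing on $W$ only yields $\dim V_t-\dim R_t=\dim V_{-t}-\dim R_{-t}$, where $R$ is the $2$-dimensional radical of $\omega_2$ (an $\mathcal N$-invariant subspace), so the eigenvalue balance requires showing that $R$ cannot be a $2$-dimensional subspace of a single nonzero weight space. This follows from two constraints you already have: reality of $\omega$ forces $\beta=\bar\alpha$, and $d\omega_1=0$ forces $\alpha-\beta\in V_0$, so the weight-$t$ components of $\alpha$ and $\beta$ coincide for every $t\neq 0$; hence if $R$ lay inside a weight-$t_0$ space with $t_0\neq 0$ one would get $\alpha|_R=\beta|_R$ and therefore $\alpha\wedge\beta\wedge\omega_2^{\,n-1}=0$, contradicting non-degeneracy. (A parity count on $\dim V_t$, which is always even, disposes of the remaining ways $R$ could distribute over weight spaces.) With that inserted, your ``only if'' argument is complete and in fact more detailed than the one in the paper.
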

Before giving the proof of Theorem \ref{SimpletticheHLCGenNakamura}, we need to recall the structure of the de Rham cohomology of $N$. From now on $I, J$ will denote multiindices in $\{1, \ldots,n\}$, and we set 
\[c_{IJ} = \sum_{k \in I \cup J} \lambda_{k}.\]
Since $N$ is of completely solvable type, we have the isomorphism
\[H^{*}_d(N) \simeq H^{*}_d(\mathfrak{g})\]
where $\mathfrak{g}$ is the Lie algebra of the Lie group $G_{M}$. Then the de Rham cohomology of $N$ can be computed just by considering the \textit{invariant} forms on $N$. For the explicit computation of generators of $H^{k}_{d}(N)$ (see \cite[Proposition 4.12]{CT}) we recall that $H^{k}_{d}(N)$, $k = 1 ,\ldots,2n$ is generated by

     \begin{itemize}
        \item $\varphi^I \wedge \bar{\varphi}^J$ such that $c_{IJ} = 0$ and $|I| + |J| = k$;\\[1pt]
        \item $\varphi^0 \wedge \varphi^I \wedge \bar{\varphi}^J$ such that $c_{IJ} = 0$ and $|I| + |J| = k - 1$;\\[1pt]
        \item $\bar\varphi^0 \wedge \varphi^I \wedge \bar{\varphi}^J$ such that $c_{IJ} = 0$ and $|I| + |J| = k - 1$.
    \end{itemize}

\begin{Proof}\textit{of Theorem \ref{SimpletticheHLCGenNakamura}}
    We start by proving (1), i.e., $n = 2m$. The case (2), corresponding to $n = 2m+1$, can be proved in the same way just by making an observation about the de Rham cohomology. In the entire proof we make use of the complex formalism, considering the cohomology groups $H_{d}^{k}(N;\C)$.\\[5pt]
    Let $n = 2m$ and suppose there exists the partition $\mathcal{P} = \{(i_1, i_2), ..., (i_{2m-1}, i_{2m})\}$. Then every $2$-form given by
    \begin{equation}\label{FormaSimplDimDispari}
        \omega = \dfrac{i}{2}C\varphi^{0} \wedge \bar\varphi^{0} + \dfrac{1}{2}\sum_{(i,j) \in \mathcal{P}}^{}\left(A_{ij}\varphi^{i} \wedge \varphi^{j} + B_{ij}\varphi^{i} \wedge \bar\varphi^{j} + \bar B_{ij}\bar\varphi^{i} \wedge \varphi^{j} + \bar A_{ij}\bar\varphi^{i} \wedge \bar\varphi^{j}\right)
    \end{equation}
such that 
\begin{equation}\label{conditions-coefficients}
C \in \mathbb{R} \setminus \{0\}, (A_{ij}, B_{ij}) \in \mathbb{C}^{2} \setminus \{(0,0)\}\qquad \hbox{for all}\,\,\, (i,j) \in \mathcal{P}
\end{equation}
is a symplectic form on $N$: indeed, by a direct computation one can see that the conditions \eqref{conditions-coefficients} on the coefficients $C, A_{i}, B_{i}$ and the indices in $\mathcal{P}$ imply that the above $2$-form $\omega$ is non degenerate. The structure equations \eqref{StrEqn} imply that $d\omega=0$.
Indeed, we get:
\[d(\varphi^{i} \wedge \varphi^{j}) = -\frac{1}{2}\lambda_{i}(\varphi^{0} + \overline{\varphi}^{0}) \wedge \varphi^{i} \wedge \varphi^{j} - \frac{1}{2}\lambda_{j} (\varphi^{0} + \overline{\varphi}^{0})\varphi^{i} \wedge \varphi^{j} = 0\]
since $\lambda_i + \lambda_j = 0$. By a similar calculation, the other summands in \eqref{FormaSimplDimDispari} are $d$-closed. Therefore, for any given $C, A_{i}, B_{i}$ as in \eqref{conditions-coefficients}, $\omega$ defined in \eqref{FormaSimplDimDispari} is a symplectic structure on $N$. 
\vskip.2truecm\noindent
Conversely, assume that for every partition $\mathcal{P} = \{(i_1, i_2), ..., (i_{2m-1}, i_{2m})\}$ there exists at least one couple $(i,j) \in \mathcal{P}$ such that $\lambda_i + \lambda_j \ne 0$. We recall that the real (complex) de Rham cohomology of $N$ can be computed by invariant forms: in particular, we have\vskip.2truecm\noindent
\begin{equation}\label{2-nd-cohomology-N} 
    H^{2}_{d}(N;\C) \simeq \mathbb{C}\langle \varphi^{0} \wedge \bar\varphi^{0}, \varphi^{i_{1}} \wedge \varphi^{i_{2}} \text{ with } \lambda_{i_{1}} + \lambda_{i_{2}} = 0, \varphi^{i} \wedge \bar\varphi^{j} \text{ with } \lambda_{i} + \lambda_{j} = 0, \bar\varphi^{j_{1}} \wedge \bar\varphi^{j_{2}} \text{ with } \lambda_{j_{1}} + \lambda_{j_{2}} = 0 \rangle. 
\end{equation}
Recall that a symplectic form $\omega$ must lie in $H^{2}_d(N;\mathbb{R})$ and must be non-degenerate. Then in the indices appearing in its expression there must appear at least one partition of $\{1,..., 2m\}$ into disjoint pairs $(i,j)$ such that $\lambda_i + \lambda_j = 0$. However, in this case such a partition cannot exist therefore, there are no symplectic structures $\omega$.\\[5pt]
    If $n = 2m +1$ (case (2)), by the same arguments as before, one can show that a symplectic form on $N$ is given by 
    \begin{gather*}\label{FormaSimplDimPari}
             \omega = \dfrac{i}{2}C\varphi^{0} \wedge \bar\varphi^{0} + \dfrac{1}{2}\sum_{(i,j) \in \mathcal{P}}^{}(A_{ij}\varphi^{i} \wedge \varphi^{j} + B_{ij}\varphi^{i} \wedge \bar\varphi^{j} + \bar B_{ij}\bar\varphi^{i} \wedge \varphi^{j} + \bar A_{ij}\bar\varphi^{i} \wedge \bar\varphi^{j} + \\
            + \dfrac{i}{2}D\varphi^{i_{2m+1}} \wedge \bar\varphi^{i_{2m+1}})
    \end{gather*}
    where $C, A_{ij}, B_{ij}$ are as in \eqref{conditions-coefficients} and $D \in \mathbb{R} \setminus \{0\}$. We point out that the condition $$\lambda_{i_{2m+1}} = 0$$ is necessary in order that 
    $$\varphi^{0} \wedge \bar\varphi^{i_{2m+1}}, \quad \varphi^{i_{2m+1}} \wedge \bar\varphi^{0} \quad \text{and}\quad \varphi^{i_{2m+1}} \wedge \bar\varphi^{i_{2m+1}}$$
    are $d$-closed. Therefore, such a condition ensures the existence of non-degenerate $d$-closed 2-forms, looking at $H^2_d(N; \mathbb{R})$ and following the same argument as before.\\[5pt]
    From remark \ref{LatticeSpezza}, it is now sufficient to work with $n = 2m$. The $k$-power of $\omega$ has the form
    \[\omega^{k} = k!\bigg(\sum_{\substack{I,J \\ |I| + |J| = 2k-2 \\ 
    c_{IJ} = 0 \\ 0 \notin I \cup J}} A_{IJ}\varphi^{0} \wedge \bar\varphi^{0} \wedge \varphi^{I} \wedge \bar\varphi^{J} +\sum_{\substack{I,J \\ |I| + |J| = 2k \\ 
    c_{IJ} = 0 \\ 0 \notin I \cup J}} B_{IJ}\varphi^{I} \wedge \bar\varphi^{J}\bigg).  \]
    Then we get for $L^{k}: H^{2m+1-k}_{d}(N) \rightarrow H^{2m+1+k}_{d}(N)$:
    \begin{equation*}
    \begin{split}
        &\omega^{k} \wedge \varphi^I \wedge \bar{\varphi}^J = k!\bigg(\sum_{\substack{|I|+|J| = 2m+1-k \\ c_{IJ} = 0 \\I' \cap I = \emptyset \\ J' \cap J = \emptyset}} A^{'}_{I'J'} \varphi^{0} \wedge \bar\varphi^{0} \wedge \varphi^{I' \cup I} \wedge \bar\varphi^{J' \cup J} +\sum_{\substack{|I|+|J| = 2m+1-k \\ c_{IJ} = 0 \\ I' \cap I = \emptyset \\ J' \cap J = \emptyset}} B^{'}_{I'J'}\varphi^{I' \cup I} \wedge \bar\varphi^{J' \cup J}\bigg) \\
        &\omega^{k} \wedge \varphi^{0} \wedge \varphi^I \wedge \bar{\varphi}^J = k!\bigg(\sum_{\substack{|I|+|J| = 2m-k \\ c_{IJ} = 0 \\I' \cap I = \emptyset \\ J' \cap J = \emptyset}} B^{''}_{I'J'}\varphi^{0} \wedge \varphi^{I' \cup I} \wedge \bar\varphi^{J' \cup J}\bigg)\\
        &\omega^{k} \wedge \bar\varphi^{0} \wedge \varphi^I \wedge \bar{\varphi}^J = k!\bigg(\sum_{\substack{|I|+|J| = 2m-k \\ c_{IJ} = 0 \\I' \cap I = \emptyset \\ J' \cap J = \emptyset}} B^{'''}_{I'J'}\bar\varphi^{0} \wedge \varphi^{I' \cup I} \wedge \bar\varphi^{J' \cup J}\bigg)
    \end{split}
    \end{equation*}
    which are all independent generators of $H^{2m+1+k}_{d}(N)$ as a consequence of the non-degeneracy of $\omega$ and the conditions over the multiindices. 
\end{Proof}

\subsection{Hard Lefschetz Condition on quotients of $\C^{2n} \ltimes_{\rho} \C^{2m}$}
We now construct new families of compact complex symplectic non-K\"ahler solvmanifolds as quotients of Lie groups (not complex) of the form $\mathbb{C}^{2n} \ltimes_{\rho} \mathbb{C}^{2m}$. These solvmanifolds are examples of the constructions introduced by H. Kasuya in \cite{Kasuya}. We then prove all the statements of Theorem \ref{MainThm2}.

\subsubsection{Construction} Let $M \in \SL(2m, \mathbb{Z})$ be a matrix for which there exists $P \in \GL(2m, \mathbb{R})$ such that 
\[PMP^{-1} = \hbox{diag}(e^{\lambda}, e^{-\lambda}, \ldots, e^{\lambda}, e^{-\lambda}) =: D \]
with $\lambda \ne 0$ (we will show through examples that this condition is also non-empty and non-trivial). Again, the assumption on the eigenvalues of $M$ is to be considered up to possible rearrangements of them.\\[5pt]
Denote the coordinates of $\mathbb{C}^{2n}$ with $w := (w_{1}, \ldots, w_{2n})$ and the ones of $\mathbb{C}^{2m}$ with $z := (z_{1}, \ldots, z_{2m})$. We define a group action of $\mathbb{C}^{2n}$ over $\mathbb{C}^{2m}$ by setting $\rho : \mathbb{C}^{2n} \rightarrow \GL(2m, \mathbb{C})$
\[w = (w_{1}, \ldots, w_{2n}) \mapsto \rho(w) = \hbox{diag}(e^{\lambda(w_{1} + \bar w_{2} + \cdots + w_{2n-1} + \bar w_{2n})}, e^{-\lambda(\bar w_{1} + w_{2} + \cdots + \bar w_{2n-1} + w_{2n})}, \ldots, \]
\[ \ldots, e^{\lambda(w_{1} + \bar w_{2} + \cdots + w_{2n-1} + \bar w_{2n})}, e^{-\lambda(\bar w_{1} + w_{2} + \cdots + \bar w_{2n-1} + w_{2n})}). \]
Let $G_{n,m} := \mathbb{C}^{2n} \ltimes_{\rho} \mathbb{C}^{2m}$. We point out that $G_{n,m}$ has the form $\mathbb{C}^{N} \ltimes_{\rho} N$, where $N = \mathbb{C}^{2m}$ is a nilpotent Lie group with a left-invariant complex structure. Let $\mathfrak{g} = \text{Lie}(G_{n,m})$ be the Lie algebra of $G_{n,m}$. A basis for $\mathfrak{g}_{\mathbb{C}} = \mathfrak{g} \otimes \mathbb{C}$ is given by the left-invariant vector fields
\[ W_{i} = \frac{\del}{\del w_{i}} \quad i = 1, \ldots, 2n \]
\[Z_{2j+1} = e^{\lambda(w_{1} + \bar w_{2} + \cdots + w_{2n-1} + \bar w_{2n})}\frac{\del}{\del z_{2j+1}} \quad j = 0, \ldots, m-1 \]
\[Z_{2j+2} = e^{-\lambda(\bar w_{1} + w_{2} + \cdots + \bar w_{2n-1} + w_{2n})}\frac{\del}{\del z_{2j+2}} \quad j = 0, \ldots, m-1 \]
and their complex conjugates. By setting $W_{i} =: E_{i} + \sqrt{-1}F_{i}$, $Z_{j} =: e_{j} + \sqrt{-1}f_{j} $, we get a basis for $\mathfrak{g}$, with non trivial commutators given by

\begin{gather*}\label{CommutatoriNonCSolvable}
    [E_{i}, e_{2j+1}] = \lambda e_{2j+1} \qquad [E_{i}, f_{2j+1}] = \lambda f_{2j+1} \\
    [F_{i}, e_{2j+1}] = (-1)^{i}\lambda f_{2j+1} \qquad [F_{i}, f_{2j+1}] = (-1)^{i}\lambda e_{2j+1} \\
    [E_{i}, e_{2j+2}] = -\lambda e_{2j+2} \qquad [E_{i}, f_{2j+2}] = -\lambda f_{2j+2} \\
    [F_{i}, e_{2j+2}] = (-1)^{i+1}\lambda f_{2j+2} \qquad [F_{i}, f_{2j+2}] = (-1)^{i+1}\lambda e_{2j+2}. 
\end{gather*}
Thus, we get:
\[\mathfrak{g}, \mathfrak{g}] = \text{span}_{\mathbb{R}}\{e_{j}, f_{j}\} \qquad [[\mathfrak{g}, \mathfrak{g}],[\mathfrak{g}, \mathfrak{g}]] = \{0\} \qquad [\mathfrak{g},[\mathfrak{g}, \mathfrak{g}]] = [\mathfrak{g}, \mathfrak{g}]\]
which show that $G_{n,m}$ is a 2-step solvable but not nilpotent Lie group. However, $G_{n,m}$ is not of completely solvable type, as we also get pairs of complex conjugate eigenvalues for $Ad_{\mathfrak{g}}$.\\[5pt]
We now show how we can build compact quotients of $G_{n,m}$. Consider the following lattices in $\mathbb{C}^{2n}$ and $\mathbb{C}^{2n}$, respectively, defined by
\[ \Gamma'_{\underline{\mu}} := \mathbb{Z}^{2n} \oplus \sqrt{-1}\cdot \underline{\mu}\mathbb{Z}^{2n} \qquad
    \Gamma''_{P} := P\mathbb{Z}^{2m} \oplus \sqrt{-1}P\mathbb{Z}^{2m}\]
where $\underline{\mu} := \hbox{diag}(\mu_{1},\ldots, \mu_{2n})$ and $\underline{\mu}\mathbb{Z}^{2n}$ stands for vectors of the form $$\underline{\mu}\,^t(m_{1}, \ldots, m_{2n})=^t\!\!(\mu_{1}m_{1},\ldots, \mu_{2n}m_{2n})$$ where $^t(m_{1}, \ldots, m_{2n}) \in \mathbb{Z}^{2n}$. We prove the following:
\begin{teorema}\label{LatticeInGenNakamura}
    For $\mu_{i} = \frac{2k_{i}\pi}{\lambda}$, $k_{i} \in \mathbb{Z}$, $\Gamma_{\underline{\mu}, P} :=  \Gamma'_{\underline{\mu}} \ltimes_{\rho} \Gamma''_{P}$ is a lattice in $G_{n,m}$.
\end{teorema}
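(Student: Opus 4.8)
The plan is to check the three requirements for $\Gamma_{\underline{\mu},P}$ to be a lattice in $G_{n,m}$: that it is a subgroup, that it is discrete, and that the quotient $\Gamma_{\underline{\mu},P}\backslash G_{n,m}$ is compact. The only place where the arithmetic condition $\mu_i=2k_i\pi/\lambda$ really enters is the subgroup property, which is the heart of the matter; discreteness and compactness are then the standard facts about semidirect products of lattices.

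\textbf{Subgroup property.} The multiplication in $G_{n,m}=\C^{2n}\ltimes_\rho\C^{2m}$ is $(w',z')*(w,z)=(w'+w,\ \rho(w')z+z')$ and $\rho$ is a group homomorphism, so $\Gamma'_{\underline{\mu}}\ltimes_\rho\Gamma''_P$ is closed under $*$ and under inversion precisely when $\rho(\gamma')$ maps $\Gamma''_P=P\mathbb{Z}^{2m}\oplus\sqrt{-1}P\mathbb{Z}^{2m}$ into itself for every $\gamma'\in\Gamma'_{\underline{\mu}}$. I would write $\gamma'=a+\sqrt{-1}\,\underline{\mu}b$ with $a,b\in\mathbb{Z}^{2n}$, i.e.\ $w_i=a_i+\sqrt{-1}\mu_i b_i$, and substitute into the exponents defining $\rho(w)$. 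In each of the $2m$ exponents, of the form $\pm\lambda(w_1+\bar w_2+\cdots)$ or $\pm\lambda(\bar w_1+w_2+\cdots)$, the real part equals $\pm\lambda\sum_{i=1}^{2n}a_i$ (since $w_i$ and $\bar w_i$ share the real part $a_i$), while the imaginary part equals $\pm\sqrt{-1}\lambda(\sum_{i \text{ odd}}\mu_i b_i-\sum_{i \text{ even}}\mu_i b_i)$; the choice $\mu_i=2k_i\pi/\lambda$ makes $\lambda\mu_i=2k_i\pi$, so this quantity is $2\pi\sqrt{-1}$ times an integer and disappears upon exponentiation. Therefore $\rho(\gamma')=\mathrm{diag}(e^{\lambda s},e^{-\lambda s},\dots,e^{\lambda s},e^{-\lambda s})=D^{s}$ with $s:=\sum_{i=1}^{2n}a_i\in\mathbb{Z}$. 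Since $D^{s}=PM^{s}P^{-1}$ and $M\in\SL(2m,\mathbb{Z})$ (so $M^{s}\in\SL(2m,\mathbb{Z})$ for every $s\in\mathbb{Z}$, using $\det M=1$ when $s<0$), $D^{s}$ restricts to a bijection of $P\mathbb{Z}^{2m}$ and, being a real matrix, also of $\sqrt{-1}P\mathbb{Z}^{2m}$; hence $\rho(\gamma')(\Gamma''_P)=\Gamma''_P$ and $\Gamma_{\underline{\mu},P}$ is a subgroup.

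\textbf{Discreteness and compactness.} As a subset of $\C^{2n}\times\C^{2m}\cong\mathbb{R}^{4n+4m}$, the group $\Gamma_{\underline{\mu},P}$ is simply the product of the two lattices $\Gamma'_{\underline{\mu}}$ and $\Gamma''_P$, hence discrete, and $G_{n,m}$ is diffeomorphic to $\mathbb{R}^{4n+4m}$, so $\Gamma_{\underline{\mu},P}$ is discrete in $G_{n,m}$. For compactness I would use the homomorphism $G_{n,m}\to\C^{2n}$, $(w,z)\mapsto w$, whose kernel is $\C^{2m}$: it maps $\Gamma_{\underline{\mu},P}$ onto $\Gamma'_{\underline{\mu}}$ with kernel $\Gamma''_P$, so $\Gamma_{\underline{\mu},P}\backslash G_{n,m}$ is a fiber bundle over the torus $\Gamma'_{\underline{\mu}}\backslash\C^{2n}$ with fiber the torus $\Gamma''_P\backslash\C^{2m}$, and the total space is therefore compact. (Here one uses the $\mu_i$ to be nonzero so that $\Gamma'_{\underline{\mu}}$ is genuinely cocompact in $\C^{2n}$; equivalently, a compact fundamental domain for $\Gamma_{\underline{\mu},P}$ is the product of closed fundamental parallelepipeds for the two lattices.)

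\textbf{Main obstacle.} The one nonroutine step is the identity $\rho(\gamma')=D^{s}$ for $\gamma'\in\Gamma'_{\underline{\mu}}$: this is exactly what the normalization $\mu_i=2k_i\pi/\lambda$ is designed to produce, and it is the reason the lattice $\Gamma'_{\underline{\mu}}$ must be taken with this particular imaginary part. Once it is in hand, the remaining arguments are the standard ones showing that a semidirect product of two lattices, one of which normalizes the other, is again a lattice.
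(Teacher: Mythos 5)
Your proof is correct and follows essentially the same route as the paper: the key point in both is that the normalization $\mu_i = 2k_i\pi/\lambda$ turns the imaginary contributions to the exponents of $\rho$ into integer multiples of $2\pi\sqrt{-1}$, while $DP=PM$ with $M\in\SL(2m,\mathbb{Z})$ handles the real contributions, so that $\rho(\Gamma'_{\underline{\mu}})$ preserves $\Gamma''_P$ (the paper checks this on the generators of $\Gamma'_{\underline{\mu}}$, you compute $\rho(\gamma')=D^{s}$ directly for a general element — the same idea). You additionally spell out discreteness and cocompactness, which the paper merely asserts, and you rightly note in passing that one needs $k_i\neq 0$ for $\Gamma'_{\underline{\mu}}$ to be cocompact in $\C^{2n}$, a hypothesis the theorem's statement leaves implicit.
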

\begin{proof}
    Since $\rho((0,\ldots, 1, \ldots, 0)) = D$ and $DP = PM$ with $M \in \SL(2m,\mathbb{Z})$, the action of $\mathbb{Z}^{2n}$ preserves $\Gamma''_{P}$. Since we have
    \[\rho(\sqrt{-1}(0,\ldots, \mu_{i}, \ldots, 0)) = \hbox{diag}(e^{(-1)^{i-1}\sqrt{-1}\lambda \mu_{i}}, e^{(-1)^{i}\sqrt{-1}\lambda \mu_{i}}, \ldots, e^{(-1)^{i-1}\sqrt{-1}\lambda \mu_{i}}, e^{(-1)^{i}\sqrt{-1}\lambda \mu_{i}}) = \mathbb{I}_{2m}\]
    for the choice of $\mu_{i}$, also the action of $\sqrt{-1}\cdot \underline{\mu}\mathbb{Z}^{2n}$ preserves $\Gamma''_{P}$. Then $\Gamma_{\underline{\mu}, P} :=  \Gamma'_{\underline{\mu}} \ltimes_{\rho} \Gamma''_{P}$ is a discrete subgroup of $G_{n,m}$ which is also a well-defined lattice.
\end{proof}
 Now we set
 \begin{equation}\label{DefGenNakamura}
     N := N_{\underline{\mu}, P} = \Gamma_{\underline{\mu}, P}  \backslash (\mathbb{C}^{2n} \ltimes_{\rho} \mathbb{C}^{2m}).
 \end{equation}
Then $N$ is a compact solvmanifold which is not of completely solvable type.

\bigskip
\noindent
A global co-frame of (1,0)-forms on $N$ is given by
\begin{equation*}\label{FormeNakamuraNuove}
    \varphi^{i} = dw_{i} \qquad \psi^{2j+1} = e^{-\lambda(w_{1} + \bar w_{2} + \cdots + w_{2n-1} + \bar w_{2n})}dz_{2j+1} \qquad \psi^{2j+2} = e^{\lambda(\bar w_{1} + w_{2} + \cdots + \bar w_{2n-1} + w_{2n})}dz_{2j+2}
\end{equation*}
for $i = 1, \ldots, 2n$ and $j = 0, \ldots, m-1$. Setting $$\eta := \varphi^{1} + \bar\varphi^{2} + \cdots + \bar \varphi^{2n}, $$ we get the following structure equations:
\begin{equation}\label{EqStrNakamuraNuove}
    d\varphi^{i} = 0 \qquad d\psi^{2j+1} = -\lambda\eta \wedge \psi^{2j+1} \qquad d\psi^{2j+2} = \lambda\bar\eta \wedge \psi^{2j+2}.
\end{equation}
Thus, the co-frame $\{\varphi^{i}, \psi^{j}\}$ defines a complex structure on $N$, which makes $N$ a compact  solvmanifold endowed with a complex structure.

\subsubsection{de Rham and Dolbeault cohomologies} We compute generators for the de Rham and Dolbeault cohomology groups of $N = N_{\underline{\mu}, P}$. We will make use of these results to show that $N$ carries symplectic structures satisfying the Hard Lefschetz Condition but does not satisfy the $\del\delbar$-lemma. Since $N$ is not of completely solvable type, we recall that it is not guaranteed that the de Rham cohomology will coincide with the one of left-invariant forms. In this case, both the de Rham and Dolbeault cohomologies are not invariant. For our computations we make use of the results obtained by H. Kasuya in \cite{Kasuya2} and \cite{Kasuya}.\\[5pt]
Starting from the de Rham cohomology, we remark that $G := G_{n,m} = \mathbb{C}^{2n} \ltimes_{\rho} \mathbb{C}^{2m}$ is a simply connected Lie group with Lie algebra $\mathfrak{g}$ such that $\text{dim}\mathfrak{g}_{\C} = 4n+4m$. Let $\phi$ be the extension to $G$ of the action of the semi-simple part of $\mathfrak{g}_{\C}$ on itself. Since this part corresponds to the Lie algebra of the factor $\C^{2n}$, $\phi$ is represented by the matrix
\[\phi = \hbox{diag}(1,\ldots, 1, e^{\lambda(w_{1} + \bar w_{2} + \cdots + w_{2n-1} + \bar w_{2n})}, e^{\lambda(\bar w_{1} + w_{2} + \cdots + \bar w_{2n-1} + w_{2n})}, \ldots, e^{-\lambda(w_{1} + \bar w_{2} + \cdots + \bar w_{2n-1} + \bar w_{2n})}).\]
This means that $\phi$ is diagonal, then it is semi-simple and the characters $\{\alpha_{1}, \cdots, \alpha_{4n+4m}\}$ defined by Kasuya are given by:
\begin{itemize}
    \item $\alpha_{1} = \cdots = \alpha_{4n} = 1$,\\[1pt]
    \item $\alpha_{4n+4i+1} = e^{\lambda(w_{1} + \bar w_{2} + \cdots + w_{2n-1} + \bar w_{2n})}$, $\alpha_{4n+4i+2} = e^{ \lambda(\bar w_{1} + w_{2} + \cdots + \bar w_{2n-1} + w_{2n})}$,\\[1pt]
    \item $\alpha_{4n+4i+3} = e^{-\lambda(\bar w_{1} + w_{2} + \cdots + \bar w_{2n-1} + w_{2n})}$, $\alpha_{4n+4i+4} = e^{-\lambda(w_{1} + \bar w_{2} + \cdots + w_{2n-1} + \bar w_{2n})}$,
\end{itemize}
for $i = 0, \ldots, m-1$. We then see that the products of type $\alpha_{i_{1}} \cdots \alpha_{i_{p}}$ which are trivial on $\Gamma_{\underline{\mu}, P} $ are combinations of:
\begin{itemize}
    \item $\alpha_{1}, \ldots,\alpha_{4n}$\\[1pt]
    \item $\alpha_{4n+4i+1}\cdot\alpha_{4n+4i+4} = \alpha_{4n+4i+2}\cdot\alpha_{4n+4i+3} = 1$\\[1pt]
    \item $\alpha_{4n+4i+1}\cdot\alpha_{4n+4i+3} = e^{\lambda(w_{1} -\bar w_{1} + \cdots + \bar w_{2n} - w_{2n})}$\\[1pt]
    \item $\alpha_{4n+4i+2}\cdot\alpha_{4n+4i+4} = e^{\lambda(-w_{1} +\bar w_{1} + \cdots  -\bar w_{2n} + w_{2n})}.$
\end{itemize}
From this calculation it is straightforward to see that $d|_{A^{*}_{\Gamma_{\underline{\mu}, P} }} = 0$, where $A^{*}_{\Gamma_{\underline{\mu}, P} }$ is the sub-DGA of $A^{*}(N; \C)$ as in \cite[lemma 2.6]{Kasuya2}:
\[A^{p}_{\Gamma_{\underline{\mu}, P} } = \C \langle \alpha_{IJKH} \varphi^{I} \wedge \bar\varphi^{J} \wedge \psi^{K} \wedge \bar \psi^{H} \quad | \quad \alpha_{IJKH}|_{\Gamma_{\underline{\mu}, P} } = 1, |I| + |J| + |K| + |H| = p \rangle\]
where $\alpha_{IJKH} = \alpha_{i_{1}}\cdots\alpha_{h_{|H|}}$.
Then the following holds (see \cite[theorem 2.7]{Kasuya2}):
\begin{proposizione}\label{GeneratoriDeRhamGenNakamura}
    If $N = N_{\underline{\mu}, P}$ is defined as in \eqref{DefGenNakamura}, then $H^{*}_{d}(N) \simeq A^{*}_{\Gamma_{\underline{\mu}, P} }$.
\end{proposizione}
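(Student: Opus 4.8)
The plan is to invoke Kasuya's cohomology theorem for solvmanifolds of the form $\C^{N} \ltimes_{\rho} N$ with $N$ nilpotent, as developed in \cite{Kasuya2} and \cite{Kasuya}. The statement to prove is exactly the conclusion of \cite[theorem 2.7]{Kasuya2} once one has verified that its hypotheses hold for $N = N_{\underline{\mu},P}$ and that the differential of the sub-DGA $A^{*}_{\Gamma_{\underline{\mu},P}}$ vanishes identically. So the proof is essentially a verification, organized in three steps.

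First, I would recall the construction of $A^{*}_{\Gamma_{\underline{\mu},P}}$. One has the characters $\alpha_{1}, \ldots, \alpha_{4n+4m}$ attached to the diagonal action $\phi$ of the semisimple part of $\mathfrak{g}_{\C}$, listed explicitly in the excerpt: the first $4n$ are trivial and the remaining ones come in blocks of four exponentials $e^{\pm\lambda(\cdots)}$ for each $i = 0, \ldots, m-1$. The sub-DGA is then
\[
A^{p}_{\Gamma_{\underline{\mu},P}} = \C\langle \alpha_{IJKH}\,\varphi^{I}\wedge\bar\varphi^{J}\wedge\psi^{K}\wedge\bar\psi^{H} \mid \alpha_{IJKH}|_{\Gamma_{\underline{\mu},P}} = 1,\ |I|+|J|+|K|+|H| = p\rangle,
\]
where $\alpha_{IJKH}$ denotes the appropriate product of characters. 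The point of the character-triviality condition is to cut down the invariant forms twisted by $\phi$ to those which actually descend to the compact quotient.

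Second, I would determine which monomials $\alpha_{IJKH}$ are trivial on the lattice. By the choice $\mu_{i} = \frac{2k_{i}\pi}{\lambda}$ made in Theorem \ref{LatticeInGenNakamura}, the products $\alpha_{4n+4i+1}\cdot\alpha_{4n+4i+4}$ and $\alpha_{4n+4i+2}\cdot\alpha_{4n+4i+3}$ equal $1$, while $\alpha_{4n+4i+1}\cdot\alpha_{4n+4i+3} = e^{\lambda(w_{1}-\bar w_{1}+\cdots+\bar w_{2n}-w_{2n})}$ and $\alpha_{4n+4i+2}\cdot\alpha_{4n+4i+4}$ is its inverse; the last two are nontrivial functions on $G$ but are trivial on $\Gamma_{\underline{\mu},P}$ precisely because the imaginary part of each $w_{i}$ runs over $\mu_{i}\mathbb{Z}$ and $\lambda\mu_{i} \in 2\pi\mathbb{Z}$. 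The key observation — already flagged in the excerpt — is that every character or product of characters that is trivial on $\Gamma_{\underline{\mu},P}$ is, when it is a nonconstant function, of the form $e^{\lambda\sum_{i}\epsilon_{i}(w_{i}-\bar w_{i})}$ with $\epsilon_{i}\in\mathbb{Z}$. For such a function $\alpha$ one computes $d\alpha = \lambda\alpha\sum_{i}\epsilon_{i}(\varphi^{i}-\bar\varphi^{i})$, and since $d\varphi^{i} = d\bar\varphi^{i} = 0$ and each $\psi^{k}$-differential also produces only wedges of $\varphi,\bar\varphi,\psi,\bar\psi$ factors, one checks degree by degree that $d(\alpha_{IJKH}\varphi^{I}\wedge\bar\varphi^{J}\wedge\psi^{K}\wedge\bar\psi^{H})$ re-expresses — using the structure equations \eqref{EqStrNakamuraNuove} — as a combination of the same type of monomials with trivial characters, and in fact vanishes: indeed the combinatorics of the $\pm\lambda$ weights attached to $\psi^{2j+1}$ and $\psi^{2j+2}$ cancel against the contribution of $d\alpha_{IJKH}$ exactly because of the pairing $\alpha_{4n+4i+1}\alpha_{4n+4i+4} = 1$. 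This yields $d|_{A^{*}_{\Gamma_{\underline{\mu},P}}} = 0$.

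Third, with the hypotheses of Kasuya's theorem verified — $G = \C^{2n}\ltimes_{\rho}\C^{2m}$ simply connected, $\rho$ semisimple (diagonalizable), the lattice $\Gamma_{\underline{\mu},P}$ splitting as $\Gamma'_{\underline{\mu}}\ltimes_{\rho}\Gamma''_{P}$ — I would apply \cite[theorem 2.7]{Kasuya2} to conclude $H^{*}_{d}(N)\simeq H^{*}(A^{*}_{\Gamma_{\underline{\mu},P}})$, and since the differential on $A^{*}_{\Gamma_{\underline{\mu},P}}$ is zero this cohomology is $A^{*}_{\Gamma_{\underline{\mu},P}}$ itself. I expect the main obstacle to be the bookkeeping in the second step: one must be careful to check the triviality-on-$\Gamma$ condition for \emph{all} products of characters (not just the pairwise ones displayed), confirm that no further nonconstant $\phi$-weights can appear in $A^{*}$, and verify the cancellation in $d$ degree-by-degree rather than relying on the suggestive pairing alone. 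Once that is in place the result is immediate.
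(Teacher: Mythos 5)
Your proposal is correct and follows essentially the same route as the paper: the paper likewise lists the characters $\alpha_{1},\ldots,\alpha_{4n+4m}$ of the semisimple action $\phi$, identifies the products trivial on $\Gamma_{\underline{\mu},P}$ (noting they are exactly the constants and the functions $e^{\pm\lambda(w_{1}-\bar w_{1}+\cdots)}$), observes that $d|_{A^{*}_{\Gamma_{\underline{\mu},P}}}=0$, and then invokes \cite[Theorem 2.7]{Kasuya2} to conclude. The only difference is one of presentation: the paper carries out these verifications in the text preceding the proposition and states the result as an immediate consequence, whereas you package the same three steps (including the cancellation between $d\alpha_{IJKH}$ and the structure equations \eqref{EqStrNakamuraNuove}) into an explicit proof.
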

In particular, we observe that the de Rham cohomology groups do not depend on the choice of $k_{i} \in \mathbb{Z}$ in Theorem \ref{LatticeInGenNakamura} i.e., the generators and the \textit{Betti numbers} do not depend on the choice of the lattice.\\[5pt]
We now apply the techniques described in \cite{Kasuya} to compute generators for the Dolbeault cohomology of $N$. Firstly, we notice how our manifolds $N_{\underline{\mu}, P}$ satisfy all the conditions required in \cite[assumption 1.1]{Kasuya}. The characters $\alpha_{i}$ are given by 
\[\alpha_{2i+1} = e^{\lambda(w_{1} + \bar w_{2} + \cdots + \bar w_{2n})} \qquad \alpha_{2i+2} = e^{-\lambda(\bar w_{1} + w_{2} + \cdots + \bar w_{2n-1} + w_{2n})}\]
for $i = 0, \ldots, m-1$. Then, following \cite[lemma 2.2]{Kasuya}, we get the characters $\beta_{j}, \gamma_{l}$:
\[\beta_{j} = \begin{cases}
    e^{-\lambda(w_{2} - \bar w_{2} + \cdots + w_{2n} - \bar w_{2n})} \quad j \text{ odd } \\
    e^{\lambda(w_{1} - \bar w_{1} + \cdots + w_{2n-1} - \bar w_{2n-1})} \quad j \text{ even } \\
\end{cases} \qquad \gamma_{l} = \begin{cases}
    e^{-\lambda(w_{1} - \bar w_{1} + \cdots + w_{2n-1} - \bar w_{2n-1})} \quad l \text{ odd } \\
    e^{\lambda(w_{2} - \bar w_{2} + \cdots + w_{2n} - \bar w_{2n})} \quad l \text{ even } \\
\end{cases}\]
In particular, \textit{all the characters $\beta_{j}, \gamma_{l}$ are trivial on $\Gamma_{\underline{\mu}, P}$}. This implies that the sub-DGA of $A^{*,*}(N)$, $B^{*,*}$, is given by
\[B^{p,q} = \C\langle \beta_{J}\gamma_{L}\varphi^{I} \wedge \psi^{J} \wedge \bar\varphi^{K} \wedge \bar\psi^{L} \mid |I| + |J| = p, |K| + |L| = q \rangle\]
where $I,K \subset \{1, \ldots, 2n\}, J,L \subset \{1, \ldots, 2m\}$ are multiindices and the notations are the same as for the de Rham cohomology. A direct calculation from the structure equations \eqref{EqStrNakamuraNuove} leads to \[\delbar(\beta_{J}\gamma_{L}\varphi^{I} \wedge \psi^{J} \wedge \bar\varphi^{K} \wedge \bar\psi^{L}) = 0\] which means $\delbar|_{B^{p,q}} = 0$. This gives the following:
\begin{proposizione}\label{GeneratoriDolbeaultGenNakamura}
    For $N = N_{\underline{\mu}, P}$ as in \eqref{DefGenNakamura}, we have $H^{*}_{\delbar}(N) \simeq B^{*,*}$.
\end{proposizione}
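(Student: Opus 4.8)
The plan is to reduce the statement to Kasuya's theorem on the Dolbeault cohomology of solvmanifolds of the form $\C^{N}\ltimes N'$ with $N'$ nilpotent and equipped with a left-invariant complex structure (the main result of \cite{Kasuya}): once its hypotheses are verified, that theorem identifies $H^{p,q}_{\delbar}(N)$ with the $\delbar$-cohomology of the finite-dimensional sub-complex $B^{*,*}$, and since $\delbar$ vanishes on $B^{*,*}$ the latter cohomology is $B^{*,*}$ itself.

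First I would check that $N = N_{\underline{\mu},P}$ is of the type covered by \cite[assumption 1.1]{Kasuya}. The Lie group $G_{n,m} = \C^{2n}\ltimes_{\rho}\C^{2m}$ is the semidirect product of $\C^{2n}$ with the abelian --- hence nilpotent --- group $\C^{2m}$ carrying its standard left-invariant complex structure; the action $\rho$ is holomorphic and $\rho(w)$ is diagonal, hence semisimple, for every $w$, so the extension $\phi$ of the semisimple part of the action is diagonalizable, with associated characters the $\alpha_{i}$ exhibited above. Finally, by Theorem \ref{LatticeInGenNakamura} the lattice $\Gamma_{\underline{\mu},P} = \Gamma'_{\underline{\mu}}\ltimes_{\rho}\Gamma''_{P}$ splits as the semidirect product of a lattice $\Gamma'_{\underline{\mu}}$ in $\C^{2n}$ and a lattice $\Gamma''_{P}$ in $\C^{2m}$, which is the splitting hypothesis required by Kasuya. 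With these in place, \cite[lemma 2.2]{Kasuya} produces the auxiliary characters $\beta_{j}$ and $\gamma_{l}$ displayed before the statement, and Kasuya's theorem yields
\[ H^{p,q}_{\delbar}(N)\;\simeq\;\frac{\Ker\bigl(\delbar\colon B^{p,q}\to B^{p,q+1}\bigr)}{\im\bigl(\delbar\colon B^{p,q-1}\to B^{p,q}\bigr)}. \]
The point to stress here is that the generators $\beta_{J}\gamma_{L}\varphi^{I}\wedge\psi^{J}\wedge\bar\varphi^{K}\wedge\bar\psi^{L}$ must descend to globally defined forms on $N$; since $\psi^{j}$ and $\bar\psi^{l}$ are already $\Gamma_{\underline{\mu},P}$-invariant by the choice $\mu_{i} = \tfrac{2k_{i}\pi}{\lambda}$ of Theorem \ref{LatticeInGenNakamura}, this comes down to the triviality of $\beta_{J}\gamma_{L}$ on $\Gamma_{\underline{\mu},P}$, and each $\beta_{j},\gamma_{l}$ depends on the coordinates $w_{k}$ only through $w_{k}-\bar w_{k} = 2\sqrt{-1}\operatorname{Im} w_{k}$, whose values on the lattice lie in $\mu_{k}\mathbb{Z}$; so the same arithmetic condition that built the lattice also trivializes these characters.

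It then remains to observe that $\delbar$ vanishes on $B^{*,*}$. Splitting the structure equations \eqref{EqStrNakamuraNuove} into bidegrees expresses $\delbar\psi^{2j+1}$, $\delbar\psi^{2j+2}$, $\delbar\bar\psi^{2j+1}$, $\delbar\bar\psi^{2j+2}$ as wedges of the corresponding $\psi$- (resp.\ $\bar\psi$-) form with a sum of the $\bar\varphi^{k}$'s, while $\delbar\log\beta_{j}$ and $\delbar\log\gamma_{l}$ are exactly the opposite such sums; together with $\delbar\varphi^{I} = \delbar\bar\varphi^{K} = 0$ and the Leibniz rule this forces $\delbar(\beta_{J}\gamma_{L}\varphi^{I}\wedge\psi^{J}\wedge\bar\varphi^{K}\wedge\bar\psi^{L}) = 0$, i.e.\ $\delbar|_{B^{p,q}}\equiv 0$. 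Hence the sub-complex $(B^{*,*},\delbar)$ has zero differential, its cohomology equals $B^{*,*}$, and the displayed isomorphism becomes $H^{*}_{\delbar}(N)\simeq B^{*,*}$. I expect the main obstacle to be the bookkeeping of the second paragraph --- confirming that \cite[assumption 1.1]{Kasuya} genuinely holds for this non-completely-solvable family and that the characters $\beta_{j},\gamma_{l}$ produced by \cite[lemma 2.2]{Kasuya} are trivial on the specific lattice $\Gamma_{\underline{\mu},P}$; the $\delbar$-computation itself is routine once the bidegree decomposition of \eqref{EqStrNakamuraNuove} has been written out.
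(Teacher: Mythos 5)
Your proposal is correct and follows essentially the same route as the paper: verify Kasuya's assumption 1.1 for $G_{n,m}=\C^{2n}\ltimes_{\rho}\C^{2m}$ with the split lattice $\Gamma'_{\underline{\mu}}\ltimes_{\rho}\Gamma''_{P}$, extract the characters $\beta_{j},\gamma_{l}$ via \cite[lemma 2.2]{Kasuya}, note their triviality on the lattice, and then show $\delbar|_{B^{*,*}}=0$ from the structure equations \eqref{EqStrNakamuraNuove} so that the sub-complex computes its own cohomology. The only addition you make is spelling out the cancellation between $\delbar\log\beta_{j}$, $\delbar\log\gamma_{l}$ and the $(0,1)$-parts of $\eta$, $\bar\eta$, which the paper leaves as "a direct calculation"; this is a correct and welcome elaboration, not a different argument.
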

The previous proposition directly implies that $N = N_{\underline{\mu}, P}$ \textit{cannot satisfy the $\del\delbar$-lemma}: indeed, from Proposition \ref{GeneratoriDeRhamGenNakamura} we get $b_{1}(N) = 4n$, while Proposition \ref{GeneratoriDolbeaultGenNakamura} gives $h^{1,0}(N) = h^{0,1}(N) = 2n+2m$. Thus, 
\[h^{1,0}(N) + h^{0,1}(N) = 4n + 4m > b_{1}(N)\] 
and the $\del\delbar$-lemma does not hold on $N$. 

\begin{osservazione}\label{NoDelDelBar}
    For $N = N_{\underline{\mu}, P}$ as defined in \eqref{DefGenNakamura}, we always have \[\sum_{p+q = k} h^{p,q}(N) > b_{k}(N).\]
\end{osservazione}

\subsubsection{Symplectic structures and Hard Lefschetz Condition} Let $N = N_{\underline{\mu}, P}$ as defined in \eqref{DefGenNakamura}. We remark that the eigenvalues $\{e^{\lambda_{1}}, \ldots, e^{\lambda_{2m}}\}$ of the matrix $D$ satisfy $\lambda_{2i+1} + \lambda_{2i+2} = 0$ for $i = 0, \ldots, m-1$ (up to possible rearrangements of them). Then, as a generalization of \eqref{SimpletticheHLCGenNakamura}, we prove the following:

\begin{teorema}\label{SimpletticheHLCNuove}
    $N = N_{\underline{\mu}, P} $ carries invariant symplectic structures of the form
    \[  \omega = \dfrac{i}{2} \sum_{k = 1}^{2n}C_{k}\varphi^{k} \wedge \bar\varphi^{k} + \dfrac{1}{2}\sum_{i = 0}^{m-1}\left(B_{i}\psi^{2i+1} \wedge \bar\psi^{2i+2} + \bar B_{i}\bar\psi^{2i+1} \wedge \psi^{2i+2} \right)\]
    where $C_{k} \in \mathbb{R}\setminus \{0\} \quad \forall k$ and $B_{i} \in \C\setminus \{0\} \quad \forall i$. Moreover, all these structures satisfy the Hard Lefschetz Condition.
\end{teorema}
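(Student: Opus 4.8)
The plan is to check, in order, that $\omega$ is $d$-closed, that it is non-degenerate, and then that its Lefschetz powers $L^{k}$ are isomorphisms in de Rham cohomology; the third point carries the real content, and the only genuine obstacle is the passage from forms to cohomology, since $G_{n,m}$ is not of completely solvable type.

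For closedness I would argue from the structure equations \eqref{EqStrNakamuraNuove}: $d\varphi^{k}=0$ gives $d(\varphi^{k}\wedge\bar\varphi^{k})=0$, while conjugating $d\psi^{2i+2}=\lambda\bar\eta\wedge\psi^{2i+2}$ and using that $\lambda$ is real gives $d\bar\psi^{2i+2}=\lambda\,\eta\wedge\bar\psi^{2i+2}$, whence
\[
d(\psi^{2i+1}\wedge\bar\psi^{2i+2})=-\lambda\,\eta\wedge\psi^{2i+1}\wedge\bar\psi^{2i+2}-\psi^{2i+1}\wedge(\lambda\,\eta\wedge\bar\psi^{2i+2})=0,
\]
the two terms cancelling because $\psi^{2i+1}\wedge\eta=-\eta\wedge\psi^{2i+1}$; the summand $\bar\psi^{2i+1}\wedge\psi^{2i+2}$ is identical. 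Equivalently, since $\varphi^{k}=dw_{k}$, $\psi^{2i+1}\wedge\bar\psi^{2i+2}=dz_{2i+1}\wedge d\bar z_{2i+2}$ and $\bar\psi^{2i+1}\wedge\psi^{2i+2}=d\bar z_{2i+1}\wedge dz_{2i+2}$, the form $\omega$ simply has constant coefficients in the coordinate coframe, which also makes the descent to $N$ transparent. For non-degeneracy I split $\omega=\Omega_{\varphi}+\Omega_{\psi}$ along the two sums; these involve disjoint subsets of the coframe, so $\omega^{2n+2m}$ is a nonzero multiple of $\Omega_{\varphi}^{2n}\wedge\Omega_{\psi}^{2m}$. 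Now $\Omega_{\varphi}^{2n}$ is a nonzero multiple of $\varphi^{1}\wedge\bar\varphi^{1}\wedge\cdots\wedge\varphi^{2n}\wedge\bar\varphi^{2n}$ since all $C_{k}\neq 0$, and each block $\theta_{i}:=B_{i}\psi^{2i+1}\wedge\bar\psi^{2i+2}+\bar B_{i}\bar\psi^{2i+1}\wedge\psi^{2i+2}$ satisfies $\theta_{i}^{2}=2|B_{i}|^{2}\,\psi^{2i+1}\wedge\bar\psi^{2i+1}\wedge\psi^{2i+2}\wedge\bar\psi^{2i+2}\neq 0$, so $\Omega_{\psi}^{2m}$ is a nonzero multiple of $\psi^{1}\wedge\bar\psi^{1}\wedge\cdots\wedge\psi^{2m}\wedge\bar\psi^{2m}$. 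Hence $\omega^{2n+2m}$ is a volume form and $\omega$ is a symplectic structure, clearly invariant.

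For the Hard Lefschetz Condition, note that $\dim_{\mathbb{R}}N=4n+4m$, so one must show $L^{k}\colon H^{2n+2m-k}_{d}(N)\to H^{2n+2m+k}_{d}(N)$ is an isomorphism for $0\le k\le 2n+2m$. By Proposition \ref{GeneratoriDeRhamGenNakamura} the inclusion of the Kasuya model $A^{*}_{\Gamma_{\underline{\mu},P}}\hookrightarrow A^{*}(N;\C)$ induces an isomorphism in cohomology, and $d\equiv 0$ on $A^{*}_{\Gamma_{\underline{\mu},P}}$. A direct check with the characters listed before that proposition — or, more simply, the identities $\psi^{2i+1}\wedge\bar\psi^{2i+2}=dz_{2i+1}\wedge d\bar z_{2i+2}$, $\bar\psi^{2i+1}\wedge\psi^{2i+2}=d\bar z_{2i+1}\wedge dz_{2i+2}$, $\varphi^{k}\wedge\bar\varphi^{k}=dw_{k}\wedge d\bar w_{k}$ — shows that $\omega\in A^{2}_{\Gamma_{\underline{\mu},P}}$. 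Since $A^{*}_{\Gamma_{\underline{\mu},P}}$ is a subalgebra, $\omega^{k}\wedge\alpha\in A^{2n+2m+k}_{\Gamma_{\underline{\mu},P}}$ for every $\alpha\in A^{2n+2m-k}_{\Gamma_{\underline{\mu},P}}$, and through the (ring) isomorphism above $L^{k}$ becomes the linear map $\alpha\mapsto\omega^{k}\wedge\alpha$ between these two finite-dimensional spaces. This map is injective: evaluating at any point $p\in N$ in the global coframe $\{\varphi^{i},\bar\varphi^{i},\psi^{j},\bar\psi^{j}\}$, in which $\omega$ has constant, non-degenerate coefficients, the classical linear Hard Lefschetz theorem on $\Lambda^{*}T_{p}^{*}N$ says $\omega^{k}\wedge(\cdot)$ is already injective on $\Lambda^{2n+2m-k}T_{p}^{*}N$, hence on $A^{2n+2m-k}_{\Gamma_{\underline{\mu},P}}$, whose generators remain linearly independent at $p$. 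Finally, by Poincaré duality on the compact oriented manifold $N$,
\[
\dim A^{2n+2m-k}_{\Gamma_{\underline{\mu},P}}=b_{2n+2m-k}(N)=b_{2n+2m+k}(N)=\dim A^{2n+2m+k}_{\Gamma_{\underline{\mu},P}},
\]
so $L^{k}$ is an injective linear map between spaces of equal dimension, hence an isomorphism, and HLC holds.

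The subtle point, as anticipated, is that since $G_{n,m}$ is not of completely solvable type, $H^{*}_{d}(N)$ is strictly larger than the invariant cohomology (indeed $\sum_{p+q=k}h^{p,q}(N)>b_{k}(N)$, cf.\ Remark \ref{NoDelDelBar}), so one cannot simply run the argument on $H^{*}(\mathfrak{g})$ as in Theorem \ref{SimpletticheHLCGenNakamura}. What rescues the proof is precisely that Kasuya's model $A^{*}_{\Gamma_{\underline{\mu},P}}$ is a \emph{subalgebra} of $A^{*}(N;\C)$ carrying the zero differential, so that the Lefschetz operator on cohomology is computed by honest wedge products and the whole statement reduces to linear Hard Lefschetz plus Poincaré duality; accordingly I would isolate the verification $\omega\in A^{2}_{\Gamma_{\underline{\mu},P}}$ at the outset and keep the remainder formal.
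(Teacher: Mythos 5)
Your proposal is correct, and its skeleton coincides with the paper's: both reduce the problem to Kasuya's finite-dimensional model $A^{*}_{\Gamma_{\underline{\mu},P}}$, on which $d\equiv 0$ and which computes $H^{*}_{d}(N)$, and both then study wedging with $\omega^{k}$ inside that model. Where you diverge is in how the crucial bijectivity of $L^{k}$ is settled. The paper computes $\omega^{k}$ explicitly, applies it to the three families of generators ($\varphi^{I}\wedge\bar\varphi^{I'}\wedge\psi^{J}\wedge\bar\psi^{J'}$, and the same multiplied by $F$ or $\bar F$), and asserts that the resulting forms are again independent generators of $H^{2n+2m+k}_{d}(N)$ ``as a consequence of the non-degeneracy of $\omega$ and the couples of indices appearing in the definition of $\omega$'' --- an explicit multiindex argument whose final independence claim is left somewhat informal. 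You instead isolate the two structural facts that make everything work ($\omega\in A^{2}_{\Gamma_{\underline{\mu},P}}$, and $A^{*}_{\Gamma_{\underline{\mu},P}}$ is a subalgebra on which the differential vanishes, so $L^{k}$ on cohomology is an honest wedge product between finite-dimensional spaces), then get injectivity from the pointwise linear Hard Lefschetz isomorphism on $\Lambda^{\bullet}T^{*}_{p}N$ together with the injectivity of evaluation at a point on $A^{*}_{\Gamma_{\underline{\mu},P}}$ (each generator is a nowhere-vanishing character times a distinct coframe monomial), and conclude by Poincar\'e duality. This buys you a cleaner, essentially computation-free justification of the step the paper treats by inspection, and it makes transparent exactly why the failure of complete solvability is harmless here. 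Your preliminary verifications of closedness and non-degeneracy, which the paper omits as routine, are also correct; in particular the observation that the exponential factors cancel so that $\psi^{2i+1}\wedge\bar\psi^{2i+2}=dz_{2i+1}\wedge d\bar z_{2i+2}$ is a nice shortcut consistent with the vanishing of the characters' contribution.
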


\begin{proof}
    We just need to prove HLC. Using the same notation as in theorem \ref{SimpletticheHLCGenNakamura}, the powers of $\omega$ are given by
    \[\omega^{k} = k! \sum_{\substack{2|K| + |H| + |R| = 2k \\ c_{HR} = 0}} A_{KHR}\varphi^{K} \wedge \bar\varphi^{K} \wedge \psi^{H} \wedge \bar\psi^{R}\]
    with coefficients $\tilde{A}_{KHR} \ne 0$. Now we consider
    \[L^{k}: H^{2n+2m-k}_{d}(N) \rightarrow H^{2n+2m+k}_{d}(N)\]
    and compute the image of the generators. From proposition \ref{GeneratoriDeRhamGenNakamura} and the previous remarks on the characters $\alpha_{i}$, we get three possible types of generators for $H^{2n+2m-k}_{d}(N)$:\\[5pt]
    \begin{itemize}
        \item $\varphi^{I} \wedge \bar\varphi^{I'} \wedge \psi^{J} \wedge \bar\psi^{J'}$ such that $c_{JJ'} = 0$ and $|I| + |I'| + |J| + |J'| = 2n+2m-k$;\\[5pt]
        \item $F\varphi^{I} \wedge \bar\varphi^{I'} \wedge \psi^{J} \wedge \bar\psi^{J'}$ such that $c_{JJ'} = 0$ and $|I| + |I'| + |J| + |J'| = 2n+2m-k$;\\[5pt]
        \item $\bar F\varphi^{I} \wedge \bar\varphi^{I'} \wedge \psi^{J} \wedge \bar\psi^{J'}$ such that $c_{JJ'} = 0$ and $|I| + |I'| + |J| + |J'| = 2n+2m-k$;\\[5pt]
    \end{itemize}
    where $F := e^{\lambda(w_{1} - \bar w_{1} - w_{2} + \bar w_{2} + \ldots -w_{2n} + \bar w_{2n})}$. Then we get:\\
    \[L^{k}\varphi^{I} \wedge \bar\varphi^{I'} \wedge \psi^{J} \wedge \bar\psi^{J'} = k!\sum_{\substack{2|K| + |H| + |R| = 2k \\ c_{HR} = 0 \\ K \cap I = \emptyset, K \cap I' = \emptyset \\ H \cap J = \emptyset, R \cap J' = \emptyset}} A'_{KHR}\varphi^{K \cup I} \wedge \bar\varphi^{K \cup I'} \wedge \psi^{H \cup J} \wedge \bar\psi^{R \cup J'}\]
    \[L^{k}F\varphi^{I} \wedge \bar\varphi^{I'} \wedge \psi^{J} \wedge \bar\psi^{J'} = k!\sum_{\substack{2|K| + |H| + |R| = 2k \\ c_{HR} = 0 \\ K \cap I = \emptyset, K \cap I' = \emptyset \\ H \cap J = \emptyset, R \cap J' = \emptyset}} A''_{KHR}F\varphi^{K \cup I} \wedge \bar\varphi^{K \cup I'} \wedge \psi^{H \cup J} \wedge \bar\psi^{R \cup J'}\]
    \[L^{k}\bar F\varphi^{I} \wedge \bar\varphi^{I'} \wedge \psi^{J} \wedge \bar\psi^{J'} = k!\sum_{\substack{2|K| + |H| + |R| = 2k \\ c_{HR} = 0 \\ K \cap I = \emptyset, K \cap I' = \emptyset \\ H \cap J = \emptyset, R \cap J' = \emptyset}} A'''_{KHR} \bar F\varphi^{K \cup I} \wedge \bar\varphi^{K \cup I'} \wedge \psi^{H \cup J} \wedge \bar\psi^{R \cup J'}\]\\
    which are independent generators for $H^{2n+2m+k}_{d}(N)$ as a consequence of the non-degeneracy of $\omega$ and the couples of indices appearing in the definition of $\omega$.
\end{proof}

\subsubsection{Non-K\"ahlerianity} We conclude by showing that the manifolds $N = N_{\underline{\mu}, P}$, with the complex structure defined in \eqref{EqStrNakamuraNuove}, are not K\"ahler. We remark how non-K\"ahlerianity follows from the characterization of compact solvmanifolds with K\"ahler structures provided by K. Hasegawa in \cite{Hasegawa}; in the following we give a simple proof of this for our manifold $N$. Before, we recall some well-known definitions which will be applied in the proof. We set $\sigma_p := i^{p^2}2^{-p}$.
\begin{definizione}
    Let $(M,J)$ be a complex manifold of complex dimension $n$. A $p$-co-vector $\beta \in \Lambda^{p,0}(T_xM \otimes \C)$ is said to be \textit{simple} if $\beta = \beta^1 \wedge ... \wedge \beta^p$, where $\beta^i \in \Lambda^{1,0}(T_xM \otimes \C)$ for all $i=1, ..., p$.
\end{definizione}
\begin{definizione}
    A real $p$-co-vector $\psi^p \in \Lambda^{p,p}(T_xM)$ is called \textit{transverse} if 
    \[\sigma_{n-p}\psi^p \wedge \beta \wedge \bar\beta\]
    is strictly positive for all $\beta \in \Lambda^{n-p,0}(T_xM \otimes \C)$ simple, where \textit{strictly positive} means that $\sigma_{n-p}\psi^p \wedge \beta \wedge \bar\beta = c\text{Vol}$, $\text{Vol} = \sigma_n\psi^1 \wedge ... \wedge \psi^n \wedge \bar\psi^1 \wedge ... \wedge \bar\psi^n $ with $\{\psi^1, ..., \psi^n\}$ a basis for $\Lambda^{n,0}(T_xM \otimes \C)$. 
\end{definizione}
\begin{definizione}\label{pKahler}
    Let $(M,J)$ be a complex manifold of complex dimension $n$, and let $1 \leq p \leq n$. Given be a real $(p,p)$-form $\psi$, we say that $\psi$ is a $p$-{\em K\"ahler} structure if $d\psi = 0$ and $\psi_{x} \in \Lambda^{p,p}_{\mathbb{R}}(T_{x}^{\ast}M)$ is transverse $\forall x \in M$.
\end{definizione}
\begin{osservazione}\label{KahlerE1kahler}
Recall that a complex manifold $(M, J)$ is K\"ahler if and only if it is $1$-K\"ahler, as proved in \cite[proposition 1.15]{AA}.
\end{osservazione}
We now prove the following proposition.
\begin{proposizione}\label{NNotKahler}
    Let $N = N_{\underline{\mu}, P}$ be a manifold as in \eqref{DefGenNakamura} of complex dimension $2n+2m$. Then $N$ is not K\"ahler.
\end{proposizione}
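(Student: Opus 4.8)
The plan is to combine Remark~\ref{KahlerE1kahler} with the structure equations~\eqref{EqStrNakamuraNuove}: a K\"ahler structure on $N$ would give a closed transverse real $(1,1)$-form, but the exponential factors forced by $\lambda\neq 0$ in~\eqref{EqStrNakamuraNuove} are incompatible with the strict positivity that transversality requires.

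Suppose $N$ is K\"ahler. By Remark~\ref{KahlerE1kahler}, $N$ carries a $1$-K\"ahler structure, i.e.\ a closed real $(1,1)$-form $\omega$ with $\omega_x$ transverse for every $x\in N$. First I would normalize $\omega$ by a fibrewise averaging: lift $\omega$ to $\tilde\omega$ on the universal cover $\mathbb{C}^{2n}\times\mathbb{C}^{2m}$ and average $\tilde\omega$ over the translations $z\mapsto z+v$, $v\in\mathbb{C}^{2m}$. This is legitimate since $\tilde\omega$ is periodic in $z$ with period lattice $\Gamma''_P$; the averaged form is again closed and of type $(1,1)$, it is still transverse at every point because the transverse $(1,1)$-covectors form an open convex cone (hence are stable under averaging against a probability measure), and it descends back to $N$ because $\rho$ maps $\Gamma'_{\underline\mu}$ into maps preserving $\Gamma''_P$, exactly as in Theorem~\ref{LatticeInGenNakamura}. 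Thus we may assume that the coefficients of $\omega$ in the global coframe $\{\varphi^i,\psi^j\}$ depend only on $w=(w_1,\dots,w_{2n})$.

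Now fix $k\in\{1,\dots,2m\}$ and test transversality of $\omega$ against the simple $(2n+2m-1,0)$-form $\beta_k=\varphi^1\wedge\cdots\wedge\varphi^{2n}\wedge\psi^1\wedge\cdots\wedge\widehat{\psi^k}\wedge\cdots\wedge\psi^{2m}$: in $\sigma_{2n+2m-1}\,\omega\wedge\beta_k\wedge\bar\beta_k$ only the $\psi^k\wedge\bar\psi^k$-component of $\omega$ survives, so if that component is $\mathrm{i}h_k\,\psi^k\wedge\bar\psi^k$ (with $h_k$ real, $\omega$ being real) transversality forces $h_k$ to be nowhere vanishing, so $\log|h_k|$ is a well-defined function on $N$. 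Imposing $d\omega=0$ and reading off the component in $\operatorname{span}\{\varphi^a,\bar\varphi^a\}\wedge\psi^k\wedge\bar\psi^k$: since the coefficients of $\omega$ are independent of $z$, no summand of $\omega$ other than $\mathrm{i}h_k\psi^k\wedge\bar\psi^k$ contributes to this component, while~\eqref{EqStrNakamuraNuove} gives $d(\psi^k\wedge\bar\psi^k)=\varepsilon_k\lambda\,(\eta+\bar\eta)\wedge\psi^k\wedge\bar\psi^k$ with $\varepsilon_k=-1$ for $k$ odd and $\varepsilon_k=+1$ for $k$ even. Hence $dh_k=-\varepsilon_k\lambda\,h_k\,(\eta+\bar\eta)$, i.e.
\[
d\big(\log|h_k|\big)=-\varepsilon_k\lambda\,(\eta+\bar\eta),\qquad \eta+\bar\eta=\sum_{i=1}^{2n}(\varphi^i+\bar\varphi^i).
\]
For $e_i=(0,\dots,1,\dots,0)\in\mathbb{Z}^{2n}\subset\Gamma'_{\underline\mu}$ the path $t\mapsto[(w_0+t\,e_i,\,0)]$, $t\in[0,1]$, is a closed loop $\ell_i$ in $N$, because $\rho(e_i)=D$ fixes $0$, so $(e_i,0)\ast(w_0,0)=(w_0+e_i,0)$ represents the starting point. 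Along $\ell_i$ we have $\varphi^j=\bar\varphi^j=0$ for $j\neq i$ and $\varphi^i+\bar\varphi^i=2\,dt$, so integrating the identity above gives
\[
0=\oint_{\ell_i}d\big(\log|h_k|\big)=-\varepsilon_k\lambda\oint_{\ell_i}(\eta+\bar\eta)=-2\varepsilon_k\lambda\neq 0,
\]
a contradiction. Therefore $N$ admits no $1$-K\"ahler, hence no K\"ahler, structure.

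The step I expect to be the real obstacle is the fibrewise averaging: one must check carefully that the averaged form is $\Gamma_{\underline\mu,P}$-invariant and that strict transversality (not merely weak positivity) survives averaging; without reducing to $z$-independent coefficients the equation for $h_k$ is polluted by the fibre derivatives of the mixed coefficients of $\omega$ and the loop argument collapses. Positivity is genuinely needed here, since the de Rham cohomology of $N$ is too large for a purely cohomological obstruction: for instance $\psi^1\wedge\bar\psi^1\wedge\cdots\wedge\psi^{2m}\wedge\bar\psi^{2m}$ is closed and, by Proposition~\ref{GeneratoriDeRhamGenNakamura}, defines a nonzero class in $H^{4m}_d(N)$. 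An alternative that avoids averaging altogether: by Proposition~\ref{GeneratoriDolbeaultGenNakamura} the form $\beta_1\psi^1$ is a global holomorphic $1$-form on $N$, and~\eqref{EqStrNakamuraNuove} gives $\partial(\beta_1\psi^1)=-\lambda\,\beta_1\,(\varphi^1+\cdots+\varphi^{2n})\wedge\psi^1\neq 0$; since every holomorphic $1$-form on a compact K\"ahler manifold is $d$-closed, this already forces $N$ to be non-K\"ahler.
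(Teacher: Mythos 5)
Your proposal is correct, but both of your arguments differ genuinely from the paper's. The paper's proof is a Stokes-type obstruction: it exhibits the explicit $(2n+2m-1,2n+2m-1)$-form $\phi=\varphi^{1}\wedge\bar\varphi^{1}\wedge\cdots\wedge\varphi^{2n}\wedge\bar\varphi^{2n}\wedge\psi^{1}\wedge\bar\psi^{1}\wedge\cdots\wedge\psi^{2m-1}\wedge\bar\psi^{2m-1}$, checks from \eqref{EqStrNakamuraNuove} that it is $d$-exact, writes it (up to a constant) as $\beta\wedge\bar\beta$ with $\beta$ simple, and then gets $0<\int_N\sigma_{2n+2m-1}\,\omega\wedge\beta\wedge\bar\beta=\int_N d(\sigma_{2n+2m-1}\,\omega\wedge\theta)=0$ by transversality and Stokes --- no averaging, no reduction to invariant representatives. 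Your main route (fibrewise averaging over $\mathbb{C}^{2m}/\Gamma''_P$, extraction of the coefficient $h_k$ of $\psi^k\wedge\bar\psi^k$, and the contradiction $0=\oint_{\ell_i}d\log|h_k|=-2\varepsilon_k\lambda$) is sound --- the averaging does descend, since $L_{(\gamma',\gamma'')}\circ T_v=T_{\rho(\gamma')v}\circ L_{(\gamma',\gamma'')}$ and $|\det\rho(\gamma')|=1$, transversality is an open convex condition stable under averaging, and with $z$-independent coefficients no other summand of $\omega$ pollutes the $\varphi^a\wedge\psi^k\wedge\bar\psi^k$ and $\bar\varphi^a\wedge\psi^k\wedge\bar\psi^k$ components of $d\omega$ --- but it costs you the averaging reduction and the component bookkeeping, which the paper's global Stokes argument sidesteps entirely. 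Your closing alternative is the cleanest of the three: $\bar\partial(\beta_1\psi^1)=0$ while $\partial(\beta_1\psi^1)=-\lambda\beta_1(\varphi^1+\cdots+\varphi^{2n})\wedge\psi^1\neq 0$ (both one-line computations from \eqref{EqStrNakamuraNuove}, since $d\beta_1=-\lambda\beta_1\sum_{i\ \mathrm{even}}(\varphi^i-\bar\varphi^i)$ cancels exactly the $\bar\varphi$-part of $-\lambda\eta\wedge\psi^1$), contradicting the classical fact that holomorphic $1$-forms on compact K\"ahler manifolds are $d$-closed; it needs neither averaging nor Proposition~\ref{GeneratoriDolbeaultGenNakamura} in full, only the direct verification that $\beta_1$ is a character trivial on $\Gamma_{\underline\mu,P}$ so that $\beta_1\psi^1$ descends to $N$.
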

\begin{proof}
    Suppose there exists a K\"ahlerian metric $\omega$ on $N$. Take the non-trivial $(2n+2m-1, 2n+2m-1)$-form
    \[\phi =\varphi^{1} \wedge \bar\varphi^{1} \wedge \cdots \wedge \varphi^{2n} \wedge \bar\varphi^{2n} \wedge\psi^{1}\wedge \bar\psi^{1}\cdots \wedge \psi^{2m-1} \wedge \bar\psi^{2m-1} \]
    $\phi$ is $d$-exact, indeed:
    \[d\bigg[\frac{1}{(2m-1)\lambda}\varphi^{1} \wedge \bar\varphi^{1} \wedge \cdots \wedge \varphi^{2n}\wedge\psi^{1}\wedge \bar\psi^{1}\cdots \wedge \psi^{2m-1} \wedge \bar\psi^{2m-1}\bigg] = \]
    \[= \frac{1}{(2m-1)\lambda}\varphi^{1} \wedge \bar\varphi^{1} \wedge \cdots \wedge \varphi^{2n} \wedge \bar\varphi^{2n}\wedge [(2m-1)\lambda\psi^{1}\wedge \bar\psi^{1}\cdots \wedge \psi^{2m-1} \wedge \bar\psi^{2m-1}] = \phi.\]
    Set 
    $$
    \theta = \frac{1}{(2m-1)\lambda}\varphi^{1} \wedge \bar\varphi^{1} \wedge \cdots \wedge \varphi^{2n}\wedge\psi^{1}\wedge \bar\psi^{1}\cdots \wedge \psi^{2m-1} \wedge \bar\psi^{2m-1}
    $$
    so that $\phi = d\theta$. Up to a multiplicative constant, $\phi = \beta \wedge \bar\beta$ with $\beta = \varphi^{1} \wedge ... \wedge \varphi^{2n} \wedge \psi^1 \wedge ... \wedge \psi^{2m-1}$ simple.
    Then by Stokes's theorem and remark \ref{KahlerE1kahler} we get
    \[0 < \int_{N} \sigma_{2n+2m-1}\,\omega \wedge \beta \wedge \bar\beta = \int_{N} \sigma_{2n+2m-1} \,\omega \wedge \phi = \int_{N} d(\omega \wedge \sigma_{2n+2m-1}\,\theta) = 0\]
    which is absurd.
\end{proof}

\section{Kodaira dimension of almost-complex structures}
We now compute the \textit{Kodaira dimension} of some almost-complex structures on the generalized Nakamura manifold and $N$ compatible with the symplectic structures we found. Through these computations we provide almost-complex structures compatible with symplectic forms which turn out to have Kodaira dimension $0$ and $-\infty$. In particular, the case $ -\infty $ is obtained following the techniques in \cite{CNT}.

\subsection{Kodaira dimension of almost-complex structures on $\Gamma_{P, \tau} \backslash (\C \ltimes_{\rho} \C^{4})$} 
Let $$ N_{5} =  \Gamma_{P, \tau} \backslash (\C \ltimes_{\rho} \C^{4})$$ be the generalized Nakamura manifold as  in \eqref{DefinizioneGenNakamura}. We defined a global co-frame of $ (1,0) $-forms, $ \{\varphi^{0}, \varphi^{1}, \varphi^{2}, \varphi^{3}, \varphi^{4}\} $, with structure equations \eqref{StrEqn}. We take on $\C \ltimes_{\rho} \C^{4}  $ coordinates $ (z,z_{1}, z_{2}, z_{3}, z_{4}) $ and set 
 \[ z = x + iy \qquad z_{j} = x_{j} +iy_{j}  \quad \text{ for }  j = 1, \ldots, 4. \]
In this coordinate system, the co-frame is written as
\begin{equation*}
\left\{
\begin{array}{lll}
	\varphi^{0} &=& dx + i dy \\
	\varphi^{1} &=& e^{-\lambda_{1}x}(dx_{1} + idy_{1}) \\ \varphi^{2}&=& e^{\lambda_{1}x}(dx_{2} + idy_{2}) \\
	\varphi^{3} &=& e^{-\lambda_{2}x}(dx_{3} + idy_{3}) \\	\varphi^{4} &=& e^{\lambda_{2}x}(dx_{4} + idy_{4}). \\
\end{array}
\right.
\end{equation*}
Now we set a new real co-frame on $ N_{5} $:
\begin{equation*}
	\begin{split}
		& E^{0} = dx \qquad\qquad\,\,\,\,\,F^{0} = dy \\
		& E^{1} = e^{-\lambda_{1}x}dx_{1} \qquad F^{1} = e^{\lambda_{1}x}dx_{2}\\
		& E^{2} = e^{-\lambda_{1}x}dy_{1}  \qquad F^{2} = e^{\lambda_{1}x}dy_{2} \\
		& E^{3} = e^{-\lambda_{2}x}dx_{3} \qquad F^{3} = e^{\lambda_{2}x}dx_{4} \\
		& E^{4} = e^{-\lambda_{2}x}dy_{3} \qquad F^{4} = e^{\lambda_{2}x}dy_{4} \\
	\end{split}
\end{equation*}
and let $ \{E_{i}, F_{i}\}_{i = 0, \ldots, 4} $ be its dual frame. We define an almost-complex structure $ J $ by setting
\[ JE_{i} = F_{i} \qquad JF_{i} = - E_{i} \]
for $ i = 0, \ldots, 4 $. We can then take a symplectic structure on $ N_{5} $, $\tilde{\omega}$, such that the just defined $ J $ is \textit{$\tilde{\omega}$-compatible}, setting
\[ \tilde{\omega} = \sum_{j = 0}^{4} E^{j} \wedge F^{j}. \]
The real 2-form $ \tilde{\omega} $ is $ d $-closed and non-degenerate, giving a well-defined symplectic structure on $ N_{5} $. We remark that $  \tilde{\omega}  $ corresponds, in terms of the co-frame $ \{\varphi^{0}, \varphi^{1}, \varphi^{2}, \varphi^{3}, \varphi^{4}\} $, to
\[ \tilde{\omega} = \dfrac{i}{2}\varphi^{0\bar 0} + \dfrac{1}{2}(\varphi^{1\bar 2} + \varphi^{\bar 1 2} + \varphi^{3 \bar 4} + \varphi^{3 \bar 4}) \]
where $ \varphi^{ij} = \varphi^{i} \wedge \varphi^{j} $. Note that this agrees with theorem \ref{SimpletticheHLCGenNakamura}. We will not work directly with $ J $ in the following computations, but with a deformation of it.\\
Let $ J_{t} $ be the almost-complex structure defined by taking the following global co-frame of $ (1,0) $-forms on $ N_{5} $:
\begin{equation*}
\left\{
	\begin{split}
		& \phi_{t}^{0} = E^{0} + iF^{0} \\
		& \phi_{t}^{1} = E^{1} + iF^{1} \\
		& \phi_{t}^{2} = E^{2} + iF^{2} \\
		& \phi_{t}^{3} = E^{3} + iF^{3} \\
		& \phi_{t}^{4} = E^{4} -i\Big[\alpha(t)E^{4} +\beta(t)F^{4}\Big]
	\end{split}
    \right.
\end{equation*}
with $ t = (t_{1}, t_{2}) \in \mathbb{R}^{2} $, $ |t| $ sufficiently small, and $ \alpha(t), \beta(t) $ given by
\[ \alpha(t) = \dfrac{2t_{2}}{t_{1}^{2}+t_{2}^{2} -1} \qquad \beta(t) = \dfrac{(1-t_{1})^{2} + t_{2}^{2}}{t_{1}^{2}+t_{2}^{2} -1}.\]
Precisely, the structure $ J_{t} $ can be described in terms of the matrix
\[ \begin{pmatrix}
	0 & -1 & & & & & & & & \\
	1 & 0 & & & & & & & & \\
	& & 0 & -1 & & & & & & \\
	& & 1 & 0 & & & & & & \\
	& & & &0 & -1 & & & &  \\
	& & & &1 & 0 & & & & \\
	& & & & & & 0 & -1 & & \\
	& & & & & & 1 & 0 & &  \\
	& & & & & & & &  \alpha(t) & \beta(t) \\
	& & & & & & & &  \gamma(t) & -\alpha(t) \\
\end{pmatrix} \]
with $\gamma(t)$ such that $ -\alpha(t)^{2} -\gamma(t)\beta(t) = 1 $. A straightforward calculation from the differentials of $ E^{i}, F^{i} $ leads to the following structure equations for the co-frame $ \{\phi_{t}^{0}, \phi_{t}^{1} , \phi_{t}^{2},\phi_{t}^{3} , \phi_{t}^{4}\} $:
\begin{equation}\label{EqStrutturaKodairaGenNakamura}
\left\{
	\begin{split}
    	& d\phi_{t}^{0} = 0 \\
		& d\phi_{t}^{1} = -\dfrac{\lambda_{1}}{2}\phi_{t}^{0\bar 1} -\dfrac{\lambda_{1}}{2}\phi_{t}^{\bar 0 \bar 1} \\
		&d\phi_{t}^{2} = -\dfrac{\lambda_{1}}{2}\phi_{t}^{0\bar 2} -\dfrac{\lambda_{1}}{2}\phi_{t}^{\bar 0 \bar2} \\
		&d\phi_{t}^{3} = -\dfrac{\lambda_{2}}{2}\phi_{t}^{0\bar 3} -\dfrac{\lambda_{2}}{2}\phi_{t}^{\bar 0 \bar3} \\
		&d\phi_{t}^{4} = -\dfrac{\lambda_{2}}{2}(\phi_{t}^{0} + \phi_{t}^{\bar0}) \wedge [(1-i\alpha(t))\phi_{t}^{\bar4} - i\alpha(t)\phi_{t}^{4}]. \\
	\end{split}
    \right.
\end{equation}
The complex frame dual to $ \{\phi_{t}^{0}, \phi_{t}^{1} , \phi_{t}^{2},\phi_{t}^{3} , \phi_{t}^{4}\}$ is determined by the vector fields $ \{V_{0t}, V_{1t}, V_{2t}, V_{3t}, V_{4t}\} $, defined as
\begin{equation}\label{RifDualeKodairaGenNakamura}
\left\{
	\begin{split}
		&V_{0t} = \dfrac{1}{2}(E_{0} - iF_{0}) \\
		&V_{1t} = \dfrac{1}{2}(E_{1} - iF_{1}) \\
		&V_{2t} = \dfrac{1}{2}(E_{2} - iF_{2}) \\
		&V_{3t} = \dfrac{1}{2}(E_{3} - iF_{3}) \\
		&V_{4t} = \dfrac{1}{2}\bigg(E_{4} - \dfrac{\alpha(t)}{\beta(t)}F_{4} + \dfrac{i}{\beta(t)}F_{4}\bigg). \\
	\end{split}
    \right.
\end{equation}
We are now able to compute the almost-complex Kodaira dimension of $ (N_{5}, J) $. Precisely, we prove the following result:
\begin{proposizione}
	Let $ (N_{5}, J, \tilde{\omega}) $ as previously defined. Then  
	\[ \kappa^{J}(N_{5}) = \begin{cases}
		-\infty & \text{ if } \alpha(t) \notin \dfrac{2\pi}{k\lambda_{2}}\mathbb{Z} \quad \forall k \ge 1 \\
		0 & \text{ otherwise } \\
	\end{cases}. \] 
\end{proposizione}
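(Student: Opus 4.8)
The plan is to compute the plurigenera $P_{r,\mathcal{J}} = \dim H^{0}(N_{5}, (\Lambda^{5,0}(N_{5}))^{\otimes r})$ directly, using the explicit co-frame $\{\phi^{0}_{t},\ldots,\phi^{4}_{t}\}$ and the structure equations \eqref{EqStrutturaKodairaGenNakamura}. A section of $(\Lambda^{5,0})^{\otimes r}$ is of the form $\sigma = f\,(\phi^{0}_{t}\wedge\phi^{1}_{t}\wedge\phi^{2}_{t}\wedge\phi^{3}_{t}\wedge\phi^{4}_{t})^{\otimes r}$ for a smooth complex-valued function $f$ on $N_{5}$, and the defining condition for $\sigma \in H^{0}$ is $\bar\partial_{J_{t}}\sigma = 0$, i.e. $\overline{V_{jt}}(f) + (\text{torsion terms})\cdot f = 0$ for $j = 0,\ldots,4$. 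First I would write out $\bar\partial_{J_{t}}(\phi^{0}_{t}\wedge\cdots\wedge\phi^{4}_{t})$ using \eqref{EqStrutturaKodairaGenNakamura}: since $d\phi^{0}_{t},\ldots,d\phi^{3}_{t}$ contribute no $(5,1)$-part to the wedge (they involve $\phi^{\bar 0}_{t}$ wedged against a factor already present), only the $d\phi^{4}_{t}$ term survives, and it produces a multiple of $\bar\phi^{0}_{t}$ times the $(5,0)$-generator, with coefficient proportional to $\lambda_{2}\alpha(t)$. Hence the holomorphicity equations reduce to: $\overline{V_{jt}}(f) = 0$ for $j=1,2,3,4$ together with one equation of the shape $\overline{V_{0t}}(f) = c\,r\,\lambda_{2}\alpha(t)\,f$ for an explicit constant $c$.

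Next I would solve this overdetermined system on the universal cover $\C\ltimes_{\rho}\C^{4}$ and impose $\Gamma_{P,\tau}$-invariance. The equations $\overline{V_{jt}}(f)=0$ for $j=1,\ldots,4$ force $f$ to be independent of the anti-holomorphic directions $\bar z_{1},\ldots,\bar z_{4}$ (in the deformed sense), and then the remaining equation in the $\bar z$-direction is a linear first-order ODE whose solution is $f = g\cdot e^{(\text{linear in } x,y)}$, where $g$ depends holomorphically on $z_{1},\ldots,z_{4}$ in the appropriate twisted sense. Invariance under the lattice translations in the $z_i$-directions (which act via $\rho$, scaling by $e^{\pm\lambda_i(\cdots)}$) will, exactly as in the cohomology computations of \cite{CT}, kill any nonconstant dependence on $z_1,\ldots,z_4$ unless the relevant exponential character is trivial — here one exploits $\lambda_1,\lambda_2 \neq 0$ — so $g$ must be constant, and $f$ reduces to $f = (\text{const})\cdot e^{(\text{linear in } x,y)}$ with the linear exponent determined by $r\lambda_2\alpha(t)$. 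Finally, invariance under the $w$-lattice $\Gamma'_{\tau} = \Z \oplus \tau\sqrt{-1}\cdot\Z$ forces the exponential $e^{(\text{linear in }x,y)}$ to be periodic with respect to $1$ and $\tau\sqrt{-1}$; tracking the exponent, periodicity in the $y$-direction holds iff $\tau\, r\,\lambda_2\,\alpha(t) \in 2\pi\Z$ (up to the explicit constants), which is exactly the condition $\alpha(t) \in \frac{2\pi}{k\lambda_2}\Z$ for some $k\ge 1$ after absorbing $\tau$ into the normalization. When this holds for some $r$, $P_{r,\mathcal{J}}=1$ (only the constant survives) and $P_{r',\mathcal{J}} = 0$ otherwise, so the $\limsup$ of $\log P_{r,\mathcal{J}}/\log r$ is $0$; when it never holds, $P_{r,\mathcal{J}} = 0$ for all $r\ge 1$ (and $P_{0,\mathcal{J}}$ is not counted as it corresponds to functions, or it is $1$ but the $-\infty$ convention uses $r\ge 0$ with all vanishing — here one must be slightly careful with the $r=0$ convention, matching the definition of $\kappa^J$ given in the excerpt), giving $\kappa^{J}(N_{5}) = -\infty$.

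The main obstacle I expect is the bookkeeping in the lattice-invariance step: one must carefully compute how $\Gamma_{P,\tau}$ acts on the function $f$ after expressing everything in the real coordinates $(x,y,x_j,y_j)$, keeping track of the $\rho$-twisting in the $z$-directions and the precise linear form appearing in the exponent $e^{(\text{linear in }x,y)}$, including the contribution of $\beta(t)$ through the dual frame $V_{4t}$ in \eqref{RifDualeKodairaGenNakamura}. A secondary subtlety is justifying that global $\bar\partial_{J_t}$-holomorphic sections on the compact quotient lift to solutions of the PDE system on the cover that are genuinely of the separated form claimed — i.e. that no "exotic" non-separated solutions exist — which follows because the equations $\overline{V_{jt}}f = 0$ for $j = 1,\ldots,4$ are a maximally non-degenerate (elliptic-along-a-foliation) system forcing the stated structure, but this needs to be stated cleanly. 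Once the invariance condition is pinned down, identifying it with $\alpha(t) \in \frac{2\pi}{k\lambda_2}\Z$ and reading off $\kappa^J$ from the definition is routine.
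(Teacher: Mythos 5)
Your overall architecture matches the paper's: write a section of $(\Lambda^{5,0}(N_5))^{\otimes r}$ as $F\,\Omega_t^{\otimes r}$, observe that only $d\phi_t^4$ contributes a $(5,1)$-term so that $\bar\partial\Omega_t=\tfrac{i\lambda_2\alpha(t)}{2}\,\phi_t^{\bar 0}\wedge\Omega_t$, reduce holomorphicity to the system $\bar V_{jt}F=0$ ($j=1,\dots,4$) plus $\bar V_{0t}F+\tfrac{ir\lambda_2\alpha(t)}{2}F=0$, show $F$ descends to a function of $(x,y)$ alone, and finish by a periodicity analysis on the base torus. For the descent step the paper forms the second-order operator $\mathcal L=\sum_j V_{jt}\bar V_{jt}$, notes it is elliptic, and uses compactness of $N_5$; your alternative (holomorphicity along the compact torus fibres plus invariance under $\Gamma''_P$) can be made to work, though the relevant mechanism is constancy of holomorphic functions on compact fibres rather than non-triviality of the characters $e^{\lambda_i(\cdot)}$ — the condition $\lambda_i\neq 0$ is not what kills the fibre dependence here.

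The genuine problem is in the final periodicity step, which is exactly where the dichotomy in the statement is decided. The twisting of the coframe ($E^j=e^{\mp\lambda_i x}dx_j$, etc.) depends only on $x=\mathrm{Re}\,w$, and the equation $\bar V_{0t}F+\tfrac{ir\lambda_2\alpha(t)}{2}F=0$ reads $F_x+iF_y=-ir\lambda_2\alpha(t)F$; its periodic solutions are (up to scale) $F=e^{-ir\lambda_2\alpha(t)x}$, so the obstruction is periodicity in the $x$-direction with period $1$, giving $r\lambda_2\alpha(t)\in 2\pi\mathbb Z$, i.e. $\alpha(t)\in\tfrac{2\pi}{r\lambda_2}\mathbb Z$, with \emph{no} dependence on $\tau$. (This is what the paper's Fourier expansion $u=\sum u_{mn}e^{2\pi i(mx+ny/\tau)}$ produces: the imaginary part of the symbol forces $\bar n=0$, and the real part forces $4\pi^2\bar m^2=\lambda_2^2\alpha(t)^2r^2$.) Your version derives the condition from ``periodicity in the $y$-direction'' and arrives at $\tau\,r\,\lambda_2\,\alpha(t)\in 2\pi\mathbb Z$; this cannot be repaired by ``absorbing $\tau$ into the normalization'', since $\tau$ is a fixed lattice parameter that does not appear in the statement — carried out literally, your computation would yield a $\tau$-dependent (and hence wrong) criterion. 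A smaller slip in the same vein: in the case $\alpha(t)\in\tfrac{2\pi}{r\lambda_2}\mathbb Z$ the surviving section is \emph{not} the constant one (a constant $F$ forces $\tfrac{ir\lambda_2\alpha(t)}{2}F=0$, hence $F=0$ when $\alpha(t)\neq 0$); it is the non-constant solution $e^{-ir\lambda_2\alpha(t)x}$ that gives $P_{r,J}=1$.
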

\begin{proof}
	Let $ \Omega_{t} = \phi_{t}^{01234} \in \Lambda^{5,0}(N_{5})$. From \eqref{EqStrutturaKodairaGenNakamura} we get
	\[ \delbar\Omega_{t} = \dfrac{i\lambda_{2}\alpha(t)}{2} \phi_{t}^{\bar 0 01234}\]
	which is non-zero if we take $ \alpha(t) \ne 0 $. Consider now a smooth section $ \sigma = F\Omega_{t} $ of $\Lambda^{5,0}(N_{5})  $. Then $ F\Omega_{t} \in H^{0}(N_{5}, \Lambda^{5,0}(N_{5})) $ if and only if
	\[ 0 = \delbar(F\Omega_{t}) = \delbar F \wedge  \phi_{t}^{01234}  +   \dfrac{i\lambda_{2}\alpha(t)}{2} F\phi_{t}^{\bar 0 01234} \iff \delbar F +   \dfrac{i\lambda_{2}\alpha(t)}{2} F\phi_{t}^{\bar 0} = 0.\]
	The equation translates into the system of differential equations on $ N_{5} $ given by
	\[ \begin{cases}
		\bar V_{0t}F +   \dfrac{i\lambda_{2}\alpha(t)}{2} F = 0 \\
		\bar V_{1t}F = \bar V_{2t}F = \bar V_{3t}F = \bar V_{4t}F = 0. \\
	\end{cases} \]
	The last four equations and their conjugates give us
	\[ (V_{1t}\bar V_{1t} + V_{2t}\bar V_{2t} + V_{3t}\bar V_{3t} + V_{4t}\bar V_{4t})F = 0. \]
	Setting $ \mathcal{L} =  V_{1t}\bar V_{1t} + V_{2t}\bar V_{2t} + V_{3t}\bar V_{3t} + V_{4t}\bar V_{4t}$, a direct computation fron \eqref{RifDualeKodairaGenNakamura} shows that
	\begin{equation*}
			\begin{split}
			\mathcal{L} & = e^{2\lambda_{1}x}\dfrac{\del^{2}}{\del x_{1}^{2}} + e^{-2\lambda_{1}x}\dfrac{\del^{2}}{\del x_{2}^{2}} + e^{2\lambda_{1}x}\dfrac{\del^{2}}{\del y_{1}^{2}} + e^{-2\lambda_{1}x}\dfrac{\del^{2}}{\del y_{2}^{2}} + e^{2\lambda_{2}x}\dfrac{\del^{2}}{\del x_{3}^{2}} + e^{-2\lambda_{2}x}\dfrac{\del^{2}}{\del x_{4}^{2}} + e^{2\lambda_{2}x}\dfrac{\del^{2}}{\del y_{3}^{2}} + \\
			& -2\dfrac{\alpha(t)}{\beta(t)}\dfrac{\del^{2}}{\del y_{3}\del y_{4}} + e^{-2\lambda_{2}x}\bigg(1 + \dfrac{\alpha^{2}(t)}{\beta^{2}(t)}\bigg)\dfrac{\del^{2}}{\del x_{1}^{2}}
		\end{split}
	\end{equation*}
	and $\mathcal{L}$ turns out to be elliptic. Writing $ F = u + iv $ the equation $ \mathcal{L}F = 0 $ is equivalent to $ \mathcal{L}u = \mathcal{L}v = 0 $. Therefore, the compactness of $ N_{5} $ implies that $ u = u(x,y)$ and $ v = v(x,y) $. We now consider the first equation of the system, which further translates into a differential system in $ u $ and $ v $:
	\[ \begin{cases}
		u_{x} - v_{y} - \lambda_{2}\alpha(t)v = 0 \\
		u_{y} + v_{x} +\lambda_{2}\alpha(t)u = 0. \\
	\end{cases} \]
	Differentiating the first one with respect to $ x $, the second one with respect to $ y $ and adding the two, we get 
	\[ u_{xx} + u_{yy} + \lambda_{2}\alpha(t)u_{y} - \lambda_{2}\alpha(t)v_{x} = 0  \]
	which becomes, re-writing $ v_{x} $ from the second equation
	\begin{equation}\label{EqPerUKodairaGenNakamura}
		u_{xx} + u_{yy} + 2\lambda_{2}\alpha(t)u_{y} + \lambda_{2}^{2}\alpha^{2}(t)u = 0.
	\end{equation} 
	We have now obtained an equation regarding just $ u $, and we need to look at periodic solutions, in the form of Fourier series
	\[ u(x,y) = \sum_{m,n \in \mathbb{Z}}^{}u_{mn}e^{2\pi i(mx + \frac{n}{\tau}y)} \]
	where $\tau$ is the parameter appearing in the definition of the lattices in $ \C \ltimes_{\rho} \C^{4} $. Substituting then in \eqref{EqPerUKodairaGenNakamura}, we get for each couple $ m,n \in \mathbb{Z} $ the equation
	\[ \bigg[-4\pi^{2}\bigg(m^{2} + \dfrac{n^{2}}{\tau^{2}}\bigg) + 4\pi i\dfrac{n}{\tau}\lambda_{2}\alpha(t) +  \lambda_{2}^{2}\alpha^{2}(t)\bigg]u_{mn} = 0. \]
	Suppose now there exists a couple $ (\bar m, \bar n) \ne (0,0) $ such that $ u_{\bar m \bar n} \ne 0 $. Then the previous equation gives
	\[ \begin{cases}
		-4\pi^{2}\bigg(\bar m^{2} + \dfrac{\bar n^{2}}{\tau^{2}}\bigg) + \lambda_{2}^{2}\alpha^{2}(t) = 0 \\[8pt]
		4\pi \dfrac{\bar n}{\tau}\lambda_{2}\alpha(t) = 0. \\
	\end{cases} \]
	Since we have already taken $ \alpha(t) \ne 0 $, the second equation directly gives $ \bar n  = 0$ and the first one becomes
	\[ -4\pi^{2}\bar m^{2} + \lambda_{2}^{2}\alpha^{2}(t) = 0 \]
	which has a solution $ \bar m \in \mathbb{Z} $ if and only if $ \alpha(t) \in  \dfrac{2\pi}{\lambda_{2}}\mathbb{Z} $. Hence, if we take $ \alpha(t) \notin  \dfrac{2\pi}{\lambda_{2}}\mathbb{Z} $, it follows that $ u_{mn} = 0 $ for all $ m,n \ne 0 $. Therefore, $ u = u_{00} $ is constant. The same argument for $ v $ also gives us $ v = v_{00} $. Putting everything together, if $  \alpha(t) \notin  \dfrac{2\pi}{\lambda_{2}}\mathbb{Z}  $ the equation $ \mathcal{L}F = 0 $ is satisfied if and only if $ F $ is constant. Now, if $ F \ne 0 $, the initial equation $ 0 = \delbar(F\Omega_{t}) $ reduces to
	\[  \dfrac{i\lambda_{2}\alpha(t)}{2} F\phi_{t}^{\bar 0 01234} = 0  \]
	which is absurd. We have then proved that
	\[  \alpha(t) \notin  \dfrac{2\pi}{\lambda_{2}}\mathbb{Z} \implies F = 0. \]
	Hence, there are no trivial sections of $ \Lambda^{5,0}(N_{5}) $. Summarizing, we got
	\[ P_{1,J} = \begin{cases}
		0 \text{ if }  \alpha(t) \notin  \dfrac{2\pi}{\lambda_{2}}\mathbb{Z}  \\
		1 \text{ otherwise }. \\
	\end{cases} \]
	The same argument can be generalized to $( \Lambda^{5,0}(N_{5}) )^{\otimes k}$ for $ k > 1 $. In fact, taking a smooth section of $ ( \Lambda^{5,0}(N_{5}) )^{\otimes k} $
	\[ \sigma = F\Omega_{t} \otimes \cdots \otimes \Omega_{t},\]
	the same steps lead to the equation
	\[ \delbar F + \dfrac{ik\lambda_{2}\alpha(t)}{2}F = 0. \]
	Thus, proceding in the same way as before, we get
	\[ P_{k,J} = \begin{cases}
		0 \text{ if }  \alpha(t) \notin  \dfrac{2\pi}{k\lambda_{2}}\mathbb{Z} \\
		1 \text{ otherwise }. \\
	\end{cases}  \]
	This proves the proposition.
\end{proof}

\begin{osservazione}
	\begin{enumerate}
		\item The condition $ \alpha(t) \notin \dfrac{2\pi}{k\lambda_{2}}\mathbb{Z} \quad \forall k \ge 1 $ is non-empty (take indeed $ t = (1,1) $).
		\item We point out that the validity of this method holds for every generalized Nakamura manifold of odd complex dimension; we do not carry out the explicit computations as the indices and the number of equations rapidly increase. However, the method allows the production of many examples of symplectic manifolds endowed with almost-complex structures compatible with the symplectic form with Kodaira dimension $ -\infty $.
	\end{enumerate}
\end{osservazione}

\subsection{Kodaira dimension of almost-complex structures on $N = N_{\underline{\mu}, P}$}
We consider an almost-complex structure on $N$ which is $\omega$-compatible, with $\omega$ the symplectic structure given by
\[\omega = \dfrac{i}{2} \sum_{k = 1}^{2n}\varphi^{k} \wedge \bar\varphi^{k} + \dfrac{1}{2}\sum_{i = 0}^{m-1}\left(\psi^{2i+1} \wedge \bar\psi^{2i+2} + \bar\psi^{2i+1} \wedge \psi^{2i+2} \right).\]
From the structure equations \eqref{EqStrNakamuraNuove}, $N$ is a complex manifold  but the complex structure is not $\omega$-compatible. We remark that $N$ cannot admit almost-complex structures which are both $\omega$-compatible and integrable, as it is not K\"ahler. However, we can define a  $\omega$-compatible almost-complex structure on $N$. Consider the 1-forms
\begin{equation}\label{ACmplxStrOnNewManifolds}
\left\{
\begin{array}{llll}
\alpha^{i} &=& \varphi^{i} \qquad & \text{for } i = 1,\ldots, 2n\\ [3pt] \beta^{2j+1} &=& \frac{\sqrt{2}}{2}\bigl(\psi^{2j+1} + \sqrt{-1}\psi^{2j+2}\bigr) {} &\\[3pt]
\beta^{2j+2} &=& \frac{\sqrt{2}}{2}\bigl(\bar\psi^{2j+1} + \sqrt{-1}\bar\psi^{2j+2}\bigr) \qquad &\text{for } j = 0, ..., m-1.
\end{array}    
\right.
\end{equation}
We set $$\{\alpha^{1}, \ldots, \alpha^{2n}, \beta^{1}, \ldots, \beta^{2m}\}
$$
as a co-frame of $(1,0)$-forms on $N$. Keeping the notation $$\eta := \varphi^{1} + \bar\varphi^{2} + \cdots + \bar \varphi^{2n} = \alpha^{1} + \bar\alpha^{2} + \cdots + \bar \alpha^{2n},$$ from \eqref{EqStrNakamuraNuove} we get the following structure equations:
$$
\left\{
\begin{array}{llll}
    d\alpha^{i} &=& 0 \qquad &\text{for } i = 1, \ldots, 2n \\[3pt]
    d\beta^{2j+1} &=& \frac{\lambda}{2}\bigl[(\bar\eta - \eta) \wedge \beta^{2j+1} - (\eta + \bar\eta) \wedge \bar\beta^{2j+2}\bigr] \qquad & \text{for } j = 0, \ldots, m-1 \\[3pt] 
    d\beta^{2j+2} &=& \frac{\lambda}{2}\bigl[-(\bar\eta + \eta) \wedge \bar\beta^{2j+1} + (\eta - \bar\eta) \wedge \beta^{2j+2}\bigr] \qquad & \text{for } j = 0, \ldots, m-1.\\[3pt]
\end{array}
\right.
$$
The obtained equations define an almost-complex structure on $N$ which is not integrable. We will denote it by $\mathcal{J}$. It turns out that $\mathcal{J}$ is $\omega$-compatible as in this new co-frame we get
\[\omega = \frac{\sqrt{-1}}{2}\bigl(\sum_{i = 1}^{2n} \alpha^{i} \wedge \bar\alpha^{i} + \sum_{j = 1}^{2m} \beta^{j} \wedge \bar\beta^{j}\bigr).\]
We now compute the \textit{Kodaira dimension} of $(N, \mathcal{J})$. From the structure equations of the co-frame \eqref{ACmplxStrOnNewManifolds}, we get:
$$
\left\{
\begin{array}{lll}
  \delbar\alpha^{i} &= 0 \qquad &\text{for } i = 1, \ldots, 2n \\[3pt]
  \delbar\beta^{2j+1} &= \frac{\lambda}{2}\bigl[\bigl(-\sum_{\substack{i = 1 \\ i \text{ }  even}}^{2n}\bar\alpha^{i} + \sum_{\substack{i = 1 \\ i \text{ } odd}}^{2n}\bar\alpha^{i}\bigr) \wedge \beta^{2j+1} - \bigl(\sum_{i = 1}^{2n} \alpha^{i}\bigr) \wedge \bar\beta^{2j+2}\bigr] \qquad & \text{for } j = 0, ..., m-1 \\[3pt]
  \delbar\beta^{2j+2} &= \frac{\lambda}{2}\bigl[-\bigl(\sum_{i = 1}^{2n} \alpha^{i}\bigr) \wedge \bar\beta^{2j+1} + \bigl(\sum_{\substack{i = 1 \\ i \text{ }  even}}^{2n}\bar\alpha^{i} - \sum_{\substack{i = 1 \\ i \text{ } odd}}^{2n}\bar\alpha^{i}\bigr) \wedge \beta^{2j+2}\bigr] \qquad &\text{for } j = 0, \ldots, m-1. \\[3pt]
\end{array}
\right.
$$
We can then prove the following:
\begin{proposizione}\label{KodairaNuove}
    Let $(N, \mathcal{J})$ be the almost-complex manifold with a complex $(1,0)$-coframe given by  \eqref{ACmplxStrOnNewManifolds}. Then $\kappa^{\mathcal{J}}(N) = 0$.
\end{proposizione}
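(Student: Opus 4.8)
The plan is to compute all the plurigenera $P_{r,\mathcal{J}}$ explicitly and to show that each of them equals $1$, whence $\kappa^{\mathcal J}(N)=\limsup_{r\to\infty}\frac{\log 1}{\log r}=0$ (in particular it is not $-\infty$, since $P_{r,\mathcal J}\neq 0$). The natural object to work with is the nowhere‑vanishing global $(2n+2m,0)$‑form
$\Omega:=\alpha^{1}\wedge\cdots\wedge\alpha^{2n}\wedge\beta^{1}\wedge\cdots\wedge\beta^{2m}$,
which trivialises $\Lambda^{2n+2m,0}N$; every smooth section of $(\Lambda^{2n+2m,0}N)^{\otimes r}$ is therefore of the form $F\,\Omega^{\otimes r}$ with $F\in C^{\infty}(N;\C)$.

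The first step is to verify that $\delbar\Omega=0$. Since $\delbar\alpha^{i}=0$, this reduces to $\delbar(\beta^{1}\wedge\cdots\wedge\beta^{2m})$, which I expand with the Leibniz rule using the structure equations for $\delbar\beta^{2j+1},\delbar\beta^{2j+2}$ recalled above. Every term that carries a factor $\sum_i\alpha^{i}$ is killed after wedging with $\alpha^{1}\wedge\cdots\wedge\alpha^{2n}$; and for each pair of indices $(2j+1,2j+2)$ the two surviving terms — both proportional to $\bigl(\sum_{i\text{ odd}}\bar\alpha^{i}-\sum_{i\text{ even}}\bar\alpha^{i}\bigr)\wedge\beta^{1}\wedge\cdots\wedge\beta^{2m}$ — cancel, because the coefficient of the $\beta$‑term in $\delbar\beta^{2j+1}$ and in $\delbar\beta^{2j+2}$ have opposite signs. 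Hence $\delbar\Omega=0$, so also $\delbar\Omega^{\otimes r}=0$, and $\delbar(F\,\Omega^{\otimes r})=(\delbar F)\otimes\Omega^{\otimes r}$. Consequently $F\,\Omega^{\otimes r}$ is $\delbar$‑closed if and only if $\delbar F=0$, and $P_{r,\mathcal J}=\dim_{\C}\{F\in C^{\infty}(N;\C):\delbar F=0\}$ for every $r\ge 1$ (while trivially $P_{0,\mathcal J}=1$).

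The core of the argument — and the main obstacle, since $N$ is neither K\"ahler nor of completely solvable type, so one cannot simply invoke that a pseudo‑holomorphic function on a compact manifold is constant — is to prove that $\delbar F=0$ forces $F$ to be constant. Writing $\{A_{i},B_{j}\}$ for the $(1,0)$‑frame dual to $\{\alpha^{i},\beta^{j}\}$, the equation $\delbar F=0$ is the system $\bar A_{i}F=0$ ($i=1,\dots,2n$) and $\bar B_{j}F=0$ ($j=1,\dots,2m$). Now $\bar A_{i}=\partial/\partial\bar w_{i}$, so $F$ is holomorphic in $w=(w_{1},\dots,w_{2n})$, and since $\rho(\sqrt{-1}\mu_{k}e_{k})=\mathbb I_{2m}$ (from the proof of Theorem~\ref{LatticeInGenNakamura}) $F$ is genuinely $\sqrt{-1}\mu_{k}$‑periodic in each $w_{k}$. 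A bounded holomorphic function of $w_{k}$ that is $\sqrt{-1}\mu_{k}$‑periodic descends to a bounded holomorphic function on $\C/\sqrt{-1}\mu_{k}\mathbb Z\cong\C^{*}$, hence is constant; as $N$ is compact $F$ is bounded, so $F$ is independent of $w$. Then, expressing $\bar B_{2j+1},\bar B_{2j+2}$ through $\{Z_{k},\bar Z_{k}\}$, the equations $\bar B_{2j+1}F=0$ and $\bar B_{2j+2}F=0$ become $e^{\lambda(\bar w_{1}+w_{2}+\cdots)}F_{\bar z_{2j+1}}+\sqrt{-1}\,e^{-\lambda(w_{1}+\bar w_{2}+\cdots)}F_{\bar z_{2j+2}}=0$ (and likewise with holomorphic $z$‑derivatives); since $F_{z_k},F_{\bar z_k}$ now depend only on $z,\bar z$ while the two exponential characters in $w$ are linearly independent, all these derivatives vanish, so $F$ is independent of $z,\bar z$ as well, hence constant.

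Putting the pieces together yields $P_{r,\mathcal J}=1$ for every $r\ge 0$, and therefore $\kappa^{\mathcal J}(N)=\limsup_{r\to\infty}\frac{\log 1}{\log r}=0$. The only points demanding genuine care are the sign bookkeeping in the computation of $\delbar\Omega$ and the explicit identification of the dual frame $\{\bar B_{j}\}$ used to turn $\delbar F=0$ into the separated system above.
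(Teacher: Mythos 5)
Your proof is correct and follows the same skeleton as the paper's: trivialise $\Lambda^{2n+2m,0}(N)$ by $\Omega=\alpha^{1}\wedge\cdots\wedge\alpha^{2n}\wedge\beta^{1}\wedge\cdots\wedge\beta^{2m}$, verify $\delbar\Omega=0$ via exactly the cancellation you describe (the $\sum_i\alpha^i$-terms die against $\alpha^{1}\wedge\cdots\wedge\alpha^{2n}$, and the $\bar\alpha$-contributions of $\delbar\beta^{2j+1}$ and $\delbar\beta^{2j+2}$ cancel in pairs), reduce $\delbar\bigl(F\,\Omega^{\otimes r}\bigr)=0$ to $\delbar F=0$, and conclude $P_{r,\mathcal{J}}=1$ for all $r\ge 1$, hence $\kappa^{\mathcal{J}}(N)=0$. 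The one place you genuinely diverge is the step ``$\delbar F=0\Rightarrow F$ constant'': the paper simply invokes compactness, i.e.\ the standard fact that a smooth complex-valued function with $\delbar F=0$ on a compact connected almost complex manifold is constant. That fact needs no K\"ahler or complete-solvability hypothesis (it follows, for instance, from the maximum modulus principle applied along local pseudo-holomorphic discs, as in \cite{CZ}), so your stated worry that one ``cannot simply invoke'' it is unfounded. Your replacement argument --- $\bar A_i=\partial/\partial\bar w_i$ makes $F$ a bounded function holomorphic in $w$, hence independent of $w$ by Liouville (the periodicity detour through $\C/\sqrt{-1}\mu_k\mathbb{Z}\cong\C^{*}$ is not even needed), after which the linear independence of the two characters $e^{\lambda(\bar w_1+w_2+\cdots)}$ and $e^{-\lambda(w_1+\bar w_2+\cdots)}$ forces all $z$- and $\bar z$-derivatives to vanish --- is nevertheless correct, self-contained, and exploits the explicit solvmanifold structure; it just does more work than the paper's one-line appeal to compactness.
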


\begin{proof}
    Fix $\Omega = \alpha^{1} \wedge \cdots \wedge \alpha^{2n} \wedge \beta^{1} \wedge \cdots \wedge \beta^{2m} \in \Lambda^{2n+2m,0}(N)$. From now on we make use of the notation $\alpha^{1\ldots 2n} = \alpha^{1} \wedge \cdots \wedge \alpha^{2n}$. From the previous calculations we get:
    \[\delbar\Omega = \alpha^{1\ldots 2n} \wedge \bigl[\sum_{j=1}^{2m}(-1)^{j-1}\beta^{1} \wedge \cdots \wedge \delbar\beta^{j} \wedge \cdots \wedge \beta^{2m}\bigr] = \]
    \[= \frac{\lambda}{2}\alpha^{1\cdots 2n}\bigl[\sum_{\substack{j = 1 \\ j \text{ } odd}}^{2m}\bigl(-\sum_{\substack{i = 1 \\ i \text{ }  even}}^{2n}\bar\alpha^{i} + \sum_{\substack{i = 1 \\ i \text{ } odd}}^{2n}\bar\alpha^{i}\bigr) \wedge \beta^{1\ldots2m} + \sum_{\substack{j = 1 \\ j \text{ } even}}^{2m}\bigl(\sum_{\substack{i = 1 \\ i \text{ }  even}}^{2n}\bar\alpha^{i} - \sum_{\substack{i = 1 \\ i \text{ } odd}}^{2n}\bar\alpha^{i}\bigr) \wedge \beta^{1\ldots 2m}\bigr] = \]
    \[= \frac{\lambda m}{2}\alpha^{1\ldots 2n}\bigl(-\sum_{\substack{i = 1 \\ i \text{ }  even}}^{2n}\bar\alpha^{i} + \sum_{\substack{i = 1 \\ i \text{ }  odd}}^{2n}\bar\alpha^{i} + \sum_{\substack{i = 1 \\ i \text{ }  even}}^{2n}\bar\alpha^{i} - \sum_{\substack{i = 1 \\ i \text{ }  odd}}^{2n}\bar\alpha^{i}\bigr)\wedge \beta^{1\ldots 2m} = 0.\]

    Hence, if $F\Omega \in H^{0}(N, \Lambda^{2n+2m,0}(N))$, $F \in C^{\infty}(N)$, we get 
    \[0 = \delbar(F\Omega) = \delbar F \wedge \Omega\]
    which implies $\delbar F = 0$. Thus, $F$ is constant, since $N$ is compact and we get $P_{1, \mathcal{J}} = 1$.
    Now let $s=F(\Omega \otimes \cdots \otimes \Omega)$ be a nowhere vanishing smooth section of $(\Lambda^{2n+2m,0}(N))^{\otimes r}$. 
    From the previous calculation we get $\delbar s=0$ if and only if $F$ is constant.
    We then get 
    \[P_{m, \mathcal{J}} = 1 \quad \forall r \ge 1\]
    This means that $(N, \mathcal{J})$ is holomorphically trivial and we get $\kappa^{\mathcal{J}}(N) = 0$.
\end{proof}

\section{Examples} We provide some explicit families of symplectic Nakamura manifolds. The examples will show different aspects of our constructions, in particular:
\begin{itemize}
    \item the assumptions on the eigenvalues are non-empty and non-restrictive, both in the case of quotients of $\C \ltimes_{\rho} \C^{m}$ and $\C^{2n} \ltimes_{\rho} \C^{2m}$;
    \item the splitting of a non-trivially splittable lattice in $\C \ltimes_{\rho} \C^{2n+1}$.
\end{itemize}

\begin{esempio}\label{ex:1}
    Consider $M \in \SL(2,\mathbb{Z})$ with $\hbox{tr}(M)\geq 2$. Then the eigenvalues of $M$ are given by 
    $$\mu = \dfrac{n \pm \sqrt{n^{2}-4}}{2}$$ 
    for some $n \in \mathbb{N}, n \ge 2$. If $n = 2$ we obtain a complex torus, so we set $n > 2$. Then we can take matrices $M$ in the form
    \[M = \begin{pmatrix}
        1 & 1 \\
        n-2 & n-1 \\
    \end{pmatrix}\]
    and the corresponding $D$ is such that $\lambda = \log(\frac{n + \sqrt{n^{2}-4}}{2})$. Given these matrices, we can build generalized Nakamura manifolds of dimension 3: $N_{3} = \Gamma_{P, \tau} \backslash (\mathbb{C} \ltimes_{\rho} \mathbb{C}^{2})$. With the same notation of Theorem \ref{SimpletticheHLCGenNakamura} a symplectic form on $N_{3}$ is given by
    \[\omega = \frac{i}{2}C\varphi^{0} \wedge \bar\varphi^{0} + \frac{1}{2}\bigl(A\varphi^{1} \wedge \varphi^{2} + B\varphi^{1} \wedge \bar\varphi^{2} + \bar B\bar\varphi^{1} \wedge \varphi^{2} + \bar A\bar\varphi^{1} \wedge \bar\varphi^{2}\bigr)\]
    with $C \in \mathbb{R} \setminus \{0\}$ and $A,B \in \C$, not both zero. The generators and Betti numbers of $N_{3}$ are summarized in table \ref{table:1}. We write $\varphi^{ij} = \varphi^{i} \wedge \varphi^{j}$.
\end{esempio}

\begin{esempio}\label{ex:2}
    Consider $\C \ltimes_{\rho} \C^{3}$, with $\rho$ given by a matrix $M \in \SL(3, \mathbb{Z})$ such that $D = \hbox{diag}(e^{\lambda}, e^{-\lambda}, 1)$. We aim to show that we can take non-trivial matrices $M$ (i.e., $M$ is not a block matrix built from the one from Example \ref{ex:1}) that satisfy our assumptions, together with an explicit splitting of the lattice $\Gamma_{P, \tau}$ in a non-trivial case. Take the matrix 
    \[M = \begin{pmatrix}
            5 & 1 & 3 \\
            3 & 1 & 2 \\
            -4 & -1 & -2 \\
    \end{pmatrix}\]
    in $\SL(3, \mathbb{Z})$. $M$ is diagonalizable as $PMP^{-1} = D$ with
    \[P = \begin{pmatrix}
            \frac{-\sqrt{5}-3}{2} & \frac{-\sqrt{5}}{5} & \frac{-2\sqrt{5}-5}{5} \\
            \frac{\sqrt{5}-3}{2} & \frac{\sqrt{5}}{5}& \frac{2\sqrt{5}-5}{5}\\
            3 & 0 & 3 \\
    \end{pmatrix} \qquad D = \begin{pmatrix}
            \frac{\sqrt{5}+3}{2} & 0 & 0 \\
            0 & \frac{-\sqrt{5}+3}{2} & 0 \\
            0 & 0 & 1 \\
    \end{pmatrix}.\]
    We set $N_{4} = \Gamma_{P, \tau} \backslash (\mathbb{C} \ltimes_{\rho} \mathbb{C}^{3})$. The lattice $\Gamma_{P, \tau}$ is then generated by the following transformations of $\C^{4}$, with coordinates $(w, z_{1}, z_{2}, z_{3})$:

    \begin{itemize}
        \item 
        $g_{0} = (w+1,\frac{\sqrt{5}+3}{2}z_{1}, \frac{-\sqrt{5}+3}{2}z_{2}, z_{3} ), h_{0} = (w+i\tau,z_{1}, z_{2}, z_{3} )$\\
        \item $g_{1} = (w,z_{1}+\frac{-\sqrt{5}-3}{2}, z_{2}+\frac{\sqrt{5}-3}{2}, z_{3}+3 ), h_{1} = (w,z_{1}+i\frac{-\sqrt{5}-3}{2}, z_{2}+i\frac{\sqrt{5}-3}{2}, z_{3}+3i )$\\
        \item $g_{2} = (w,z_{1}+\frac{-\sqrt{5}}{5}, z_{2}+\frac{\sqrt{5}}{5}, z_{3} ), h_{2} = (w,z_{1}+i\frac{-\sqrt{5}}{5}, z_{2}+i\frac{\sqrt{5}}{5}, z_{3} )$ \\
        \item $g_{3} = (w,z_{1}+\frac{-2\sqrt{5}-5}{5} , z_{2}+\frac{2\sqrt{5}-5}{5} , z_{3}+3 ), h_{3} = (w,z_{1}+i\frac{-2\sqrt{5}-5}{5} , z_{2}+i\frac{2\sqrt{5}-5}{5} , z_{3}+3i )$\\
     \end{itemize}
 Now we set:
    \begin{itemize}
        \item $g'_{0} = g_{0} = (w+1,\frac{\sqrt{5}+3}{2}z_{1}, \frac{-\sqrt{5}+3}{2}z_{2}, z_{3} ), h'_{0} = h_{0} = (w+i\tau,z_{1}, z_{2}, z_{3} )$ \\
        \item $g'_{1} = g_{3}g_{2}g_{1}^{-1} = (w,z_{1}+\frac{-\sqrt{5}+5}{10}, z_{2}+\frac{\sqrt{5}+5}{10}, z_{3}), h'_{1} = h_{3}h_{2}h_{1}^{-1} = (w,z_{1}+i\frac{-\sqrt{5}+5}{10}, z_{2}+i\frac{\sqrt{5}+5}{10}, z_{3})$\\
        \item $g'_{2} = g_{2} = (w,z_{1}+\frac{-\sqrt{5}}{5}, z_{2}+\frac{\sqrt{5}}{5}, z_{3} ), h'_{2} = h_{2} = (w,z_{1}+i\frac{-\sqrt{5}}{5}, z_{2}+i\frac{\sqrt{5}}{5}, z_{3} )$ \\
        \item $g'_{3} = g_{3}^{3}g_{2}^{-1}g_{1}^{-2} = (w,z_{1} , z_{2} , z_{3}+3 ), h'_{3} = h_{3}^{3}h_{2}^{-1}h_{1}^{-2}= (w,z_{1} , z_{2} , z_{3}+3i ).$\\
    \end{itemize}
    Then $\{g'_{0}, g'_{1}, g'_{2}, g'_{3}, h'_{0}, h'_{1}, h'_{2}, h'_{3}\}$ is a set of generators for $\Gamma_{P, \tau}$ such that the groups $$\Gamma_{P, \tau} \simeq \Gamma_{Q, \tau} \times (3\mathbb{Z} \oplus 3i\mathbb{Z})$$ are isomorphic. Since Theorem \ref{MostowFundGroups} holds, we get 
    \[N_{4} \simeq N_{3} \times \mathbb{T}_{\C}^{1}.\]
The generators for the de Rham cohomology and the Betti numbers of $N_{4}$ are shown in table \ref{table:2}.
\end{esempio}

\begin{esempio}\label{ex:3}
    Taking the matrix $M$ from Example \eqref{ex:1}, we can build quotients of $\C^{2} \ltimes_{\rho} \C^{2}$. We take a lattice $\Gamma_{P, \underline{\mu}}$ as in Theorem \ref{LatticeInGenNakamura} and we set $N_{2,2} := N_{\underline{\mu},P}$. We then take the symplectic form 
    \[\omega = \dfrac{i}{2}\bigl(C_{1}\varphi^{1} \wedge \bar\varphi^{1} + C_{2}\varphi^{2} \wedge \bar\varphi^{2}\bigr) + \dfrac{1}{2}\bigl(B\psi^{1} \wedge \bar\psi^{2} + \bar B\bar\psi^{1} \wedge \psi^{2}\bigr)\]
    with $C_{1}, C_{2} \in \mathbb{R} \setminus \{0\}, B \in \C \setminus \{0\}$. We recall that $N_{2,2}$ is not of completely solvable type, so the de Rham cohomology is not the invariant one. Setting $F := F(w_{1}, w_{2}) = e^{\lambda(w_{1} - \bar w_{1} - w_{2} + \bar{w_{2}})}$ we get generators and Betti numbers for $N_{2,2}$ summarized in table \ref{table:3}. We also report the upper half of the Hodge diamond of $N_{2,2}$ in figure \ref{fig:1}. We point out that, even if the Betti numbers are the same as those of $N_{4}$, the two solvmanifolds have substantial different features: they carry different complex structures, their Hodge numbers differ, $N_{4}$ is of completely solvable type while $N_{2,2}$ is not, $N_{4}$ satisfies the $\del\delbar$-lemma while $N_{2,2}$ does not.\\
    We also provide an example of a non-invariant symplectic structure which also satisfies HLC. We set
    \[\Omega = \dfrac{i}{2}(\varphi^{1\bar1} + \varphi^{2\bar2}) + \dfrac{1}{2}(F\psi^{12} + \bar F\psi^{\bar1\bar2})\]
    where we make use of the notation $\varphi^{ij} = \varphi^{i} \wedge \varphi^{j}$. Since $F\bar F = 1$, we get for the powers of $\Omega$:
    \[\Omega^{2} = -\dfrac{1}{2}\varphi^{1\bar12\bar2} + \dfrac{i}{2}F\varphi^{1\bar1}\psi^{12} + \dfrac{i}{2}\bar F\varphi^{1\bar1}\psi^{\bar1\bar2} + \dfrac{i}{2}F\varphi^{2\bar2}\psi^{12} + \dfrac{i}{2}\bar F\varphi^{2\bar2}\psi^{\bar1\bar2} + \dfrac{1}{2}\psi^{12\bar1\bar2}\]
    \[\Omega^{3} = -\dfrac{3}{2}F\varphi^{1\bar12\bar2}\psi^{12} -\dfrac{3}{2}\bar F\varphi^{1\bar12\bar2}\psi^{\bar1\bar2} +\dfrac{3}{2}i\varphi^{1\bar1}\psi^{12\bar1\bar2} +\dfrac{3}{2}i\varphi^{2\bar2}\psi^{12\bar1\bar2}.\]\\
    We now show that $\Omega$ satisfies HLC. Indeed, we get:
    \begin{itemize}
        \item $L: H^{3}_{d}(N_{2,2}) \rightarrow H^{5}_{d}(N_{2,2})$:
        \begin{gather*}
            L\varphi^{1\bar1 2} =\dfrac{1}{2}F\varphi^{1\bar1 2}\psi^{12} + \dfrac{1}{2}\bar F\varphi^{1\bar1 2}\psi^{\bar1\bar2} \quad 
            L\varphi^{1\bar1\bar2} = \dfrac{1}{2}F\varphi^{1\bar1 \bar2}\psi^{12} + \dfrac{1}{2}\bar F\varphi^{1\bar1 \bar2}\psi^{\bar1\bar2} \\
            L\varphi^{12\bar2} = \dfrac{1}{2}F\varphi^{12\bar2}\psi^{12} + \dfrac{1}{2}\bar F\varphi^{12\bar2}\psi^{\bar1\bar2} \quad
            L\varphi^{\bar1 2\bar2}  = \dfrac{1}{2}F\varphi^{\bar1 2\bar2}\psi^{12} + \dfrac{1}{2}\bar F\varphi^{\bar1 2\bar2}\psi^{\bar1\bar2}\\ 
            LF\varphi^{1}\psi^{12} = \dfrac{i}{2}F\varphi^{12\bar2}\psi^{12} + \dfrac{1}{2}\varphi^{1}\psi^{12\bar1\bar2} \quad LF\varphi^{\bar1}\psi^{12} = \dfrac{i}{2}F\varphi^{\bar1 2\bar2}\psi^{12} + \dfrac{1}{2}\varphi^{\bar1}\psi^{12\bar1\bar2}\\
            LF\varphi^{2}\psi^{12} = \dfrac{i}{2}F\varphi^{1\bar1 2}\psi^{12} + \dfrac{1}{2}\varphi^{2}\psi^{12\bar1\bar2} \quad LF\varphi^{\bar2}\psi^{12} = \dfrac{i}{2}F\varphi^{1\bar1 \bar2}\psi^{12} + \dfrac{1}{2}\varphi^{\bar2}\psi^{12\bar1\bar2} \\
            L\varphi^{1}\psi^{\bar1 2} = \dfrac{i}{2}\varphi^{12\bar2}\psi^{\bar1 2} \quad
            L\varphi^{\bar1}\psi^{\bar1 2} = \dfrac{i}{2}\varphi^{\bar1 2\bar2}\psi^{\bar1 2} \quad
            L\varphi^{2}\psi^{\bar1 2} = \dfrac{i}{2}\varphi^{1\bar1 2}\psi^{\bar1 2} \quad 
            L\varphi^{\bar2}\psi^{\bar1 2} = \dfrac{i}{2}\varphi^{1\bar1\bar2}\psi^{\bar1 2}\\
            L\varphi^{1}\psi^{1 \bar2} = \dfrac{i}{2}\varphi^{12\bar2}\psi^{1 \bar2} \quad L\varphi^{\bar1}\psi^{1 \bar2} = \dfrac{i}{2}\varphi^{\bar1 2\bar2}\psi^{1 \bar2} \quad
            L\varphi^{2}\psi^{1 \bar2} = \dfrac{i}{2}\varphi^{1\bar1 2}\psi^{1 \bar2} \quad
            L\varphi^{\bar2}\psi^{1 \bar2} = \dfrac{i}{2}\varphi^{1\bar1 \bar2}\psi^{1 \bar2} \\
            L\bar F\varphi^{1}\psi^{\bar1\bar2} = \dfrac{i}{2}\bar F\varphi^{12\bar2}\psi^{\bar1\bar2} + \dfrac{1}{2} \varphi^{1}\psi^{12\bar1\bar2} \quad 
            L\bar F\varphi^{\bar1}\psi^{\bar1\bar2} = \dfrac{i}{2}\bar F\varphi^{\bar1 2\bar2}\psi^{\bar1\bar2} + \dfrac{1}{2} \varphi^{\bar1}\psi^{12\bar1\bar2} \\
            L\bar F\varphi^{2}\psi^{\bar1\bar2} = \dfrac{i}{2}\bar F\varphi^{1\bar1 2}\psi^{\bar1\bar2} + \dfrac{1}{2} \varphi^{2}\psi^{12\bar1\bar2} \quad 
            L\bar F\varphi^{\bar2}\psi^{\bar1\bar2} = \dfrac{i}{2}\bar F\varphi^{1\bar1\bar2}\psi^{\bar1\bar2} + \dfrac{1}{2} \varphi^{\bar2}\psi^{12\bar1\bar2}. \\
        \end{gather*}
        \item $L^{2}: H^{2}_{d}(N_{2,2}) \rightarrow H^{6}_{d}(N_{2,2})$:
        \begin{gather*}
            L^{2}\varphi^{1\bar1} = \dfrac{i}{2}F\varphi^{1\bar12\bar2}\psi^{12} + \dfrac{i}{2}\bar F\varphi^{1\bar12\bar2}\psi^{\bar1\bar2} + \dfrac{1}{2}\varphi^{1\bar1}\psi^{12\bar1\bar2}\\
            L^{2}\varphi^{1\bar2} = \dfrac{1}{2}\varphi^{1\bar2}\psi^{12\bar1\bar2} \quad 
            L^{2}\varphi^{\bar1 2} \dfrac{1}{2}\varphi^{\bar1 2}\psi^{12\bar1\bar2} \\
            L^{2}\varphi^{1 2} = \dfrac{1}{2}\varphi^{1 2}\psi^{12\bar1\bar2} \quad
            L^{2}\varphi^{\bar1\bar2} = \dfrac{1}{2}\varphi^{\bar1 \bar2}\psi^{12\bar1\bar2} \\
            L^{2}\varphi^{2\bar2} = \dfrac{i}{2}F\varphi^{1\bar12\bar2}\psi^{12} + \dfrac{i}{2}\bar F\varphi^{1\bar12\bar2}\psi^{\bar1\bar2} + \dfrac{1}{2}\varphi^{2\bar2}\psi^{12\bar1\bar2} \\
            L^{2}\psi^{\bar1 2} = -\dfrac{1}{2}\varphi^{1\bar12\bar2}\psi^{\bar1 2}\quad
            L^{2}\psi^{1 \bar2} = -\dfrac{1}{2}\varphi^{1\bar12\bar2}\psi^{1 \bar2}  \\
            L^{2}F\psi^{1 2} =  -\dfrac{1}{2}\varphi^{1\bar12\bar2}\psi^{1 2} + \dfrac{i}{2}\bar F \varphi^{1\bar1}\psi^{12\bar1\bar2} + \dfrac{i}{2}\bar F \varphi^{2\bar2}\psi^{12\bar1\bar2}\\
            L^{2}F\psi^{\bar1\bar2} = -\dfrac{1}{2}\varphi^{1\bar12\bar2}\psi^{\bar1\bar2} + \dfrac{i}{2}F \varphi^{1\bar1}\psi^{12\bar1\bar2} + \dfrac{i}{2}F \varphi^{2\bar2}\psi^{12\bar1\bar2}. \\
        \end{gather*}
        \item $L^{3}: H^{1}_{d}(N_{2,2}) \rightarrow H^{7}_{d}(N_{2,2})$:
        \begin{gather*}
            L^{3}\varphi^{1} = -\dfrac{3}{2}i\varphi^{12\bar2}\psi^{12\bar1\bar2}
            \quad  L^{3}\varphi^{\bar1} = -\dfrac{3}{2}i\varphi^{\bar1 2\bar2}\psi^{12\bar1\bar2}\\
            L^{3}\varphi^{2} = \dfrac{3}{2}i\varphi^{1\bar1 2}\psi^{12\bar1\bar2}
            \quad  L^{3}\varphi^{\bar2} = \dfrac{3}{2}i\varphi^{1\bar1 \bar2}\psi^{12\bar1\bar2}. \\
        \end{gather*}
    \end{itemize}
    
\end{esempio}

\begin{esempio}
    For the construction of the symplectic manifolds from $G_{n,m}$, we made an assumption over the matrix $M$: the matrix $D$ had to be in the form $D = \hbox{diag}(e^{\lambda}, e^{-\lambda}, \ldots, e^{\lambda}, e^{-\lambda})$. We now just want to show that there exist non-trivial matrices $M \in \SL(4,\mathbb{Z})$ (i.e., not block matrices built from the one of Example \ref{ex:1}) that satisfy this condition. Take the matrix
    \[M = \begin{pmatrix}
        2 & 3 & -3 & 0 \\
        -10 & 14 & -12 & 3 \\
        -11 & 12 & -10 & 3 \\
        -4 & 11 & -10 & 2 \\
    \end{pmatrix}.\]
    Then $M \in \SL(4,\mathbb{Z})$ and it is similar to the diagonal matrix $D = diag(\sqrt{3} + 2, -\sqrt{3} + 2, \sqrt{3} + 2, -\sqrt{3} + 2)$, which is the requested form and allows the building of the symplectic manifolds from $\C^{2n} \ltimes_{\rho} \C^{4}$. It also can be used to produce generalized Nakamura manifolds of complex dimension five, for which we computed the almost-complex Kodaira dimension in section 4.
\end{esempio}

 \begin{table}[H]
	\centering
	\begin{tabular}{|c|c|c|}
	\hline
	$ k $ & $ H_{d}^{k}(N_{3}, \C) $ & $ b_{k}(N_{3}) $ \\
	\hline
	0 & 1 & 1 \\
	\hline
	1 & $ \varphi^{0}, \bar\varphi^{0} $ & 2 \\
	\hline
	2 & $\varphi^{0}\bar\varphi^{0}, \varphi^{1}\varphi^{2}, \bar\varphi^{1}\varphi^{2}, \varphi^{1}\bar\varphi^{2}, \bar\varphi^{1}\bar\varphi^{2} $ & 5 \\
	\hline
	3 & $ \varphi^{0}\varphi^{1}\varphi^{2}, \varphi^{0}\bar\varphi^{1}\varphi^{2}, \varphi^{0}\varphi^{1}\bar\varphi^{2}, \varphi^{0}\bar\varphi^{1}\bar\varphi^{2}, \bar\varphi^{0}\varphi^{1}\varphi^{2}, \bar\varphi^{0}\bar\varphi^{1}\varphi^{2}, \bar\varphi^{0}\varphi^{1}\bar\varphi^{2}, \bar\varphi^{0}\bar\varphi^{1}\bar\varphi^{2}$ & 8 \\
	\hline
	4 & $ \varphi^{0}\bar\varphi^{0}\varphi^{1}\varphi^{2}, \varphi^{0}\bar\varphi^{0}\bar\varphi^{1}\varphi^{2}, \varphi^{0}\bar\varphi^{0}\varphi^{1}\bar\varphi^{2}, \varphi^{0}\bar\varphi^{0}\bar\varphi^{1}\bar\varphi^{2}, \varphi^{1}\bar\varphi^{1}\varphi^{2}\bar\varphi^{2}$ & 5 \\
	\hline
	5 & $ \varphi^{0}\varphi^{1}\bar\varphi^{1}\varphi^{2}\bar\varphi^{2}, \bar\varphi^{0}\varphi^{1}\bar\varphi^{1}\varphi^{2}\bar\varphi^{2}$ & 2 \\
	\hline
	6 & $ \omega^{3}$ & 1 \\
	\hline
	\end{tabular}
    \caption{de Rham cohomology and Betti numbers for $ N_{3} $}
    \label{table:1}
    \end{table}

    \begin{figure}[H]
        \centering
        \begin{tikzcd}[row sep=small, column sep=tiny]
            p+q = 0 &&&&&& 1 \\
            p+q = 1 &&&&& 4 && 4 \\
            p+q = 2 &&&& 6 && 8 && 6 \\
            p+q = 3 &&& 4 && 12 && 12 && 4 \\
            p+q = 4 && 1 && 16 && 36 && 16 && 1 \\
        \end{tikzcd}
        \caption{Hodge numbers for $N_{2,2}$}
        \label{fig:1}
    \end{figure}

    \newpage
    \begin{table}[H]
	\centering
	\begin{tabular}{|c|c|c|}
	\hline
	$ k $ & $ H_{d}^{k}(N_{4}, \C) $ & $ b_{k}(N_{4}) $ \\
	\hline
	0 & 1 & 1 \\
	\hline
	1 & $ \varphi^{0}, \bar\varphi^{0}, \varphi^{3}, \bar\varphi^{3} $ & 4 \\
	\hline
	2 & $\varphi^{0}\bar\varphi^{0}, \varphi^{0}\varphi^{3},\varphi^{0}\bar\varphi^{3}, \varphi^{3}\bar\varphi^{0},\bar\varphi^{0}\bar\varphi^{3}, \varphi^{3}\bar\varphi^{3}, \varphi^{1}\varphi^{2}, \bar\varphi^{1}\varphi^{2}, \varphi^{1}\bar\varphi^{2}, \bar\varphi^{1}\bar\varphi^{2} $ & 10 \\
	\hline
	\multirow{3}{4em}{\centering3} & $ \varphi^{0}\bar\varphi^{0}\varphi^{3}, \varphi^{0}\bar\varphi^{0}\bar\varphi^{3}, \varphi^{0}\varphi^{3}\bar\varphi^{3}, \bar\varphi^{0}\varphi^{3}\bar\varphi^{3}, \varphi^{0}\varphi^{1}\varphi^{2}, \varphi^{0}\bar\varphi^{1}\varphi^{2}, \varphi^{0}\varphi^{1}\bar\varphi^{2}, \varphi^{0}\bar\varphi^{1}\bar\varphi^{2}$& \multirow{3}{4em}{\centering20} \\
    & $\bar\varphi^{0}\varphi^{1}\varphi^{2}, \bar\varphi^{0}\bar\varphi^{1}\varphi^{2}, \bar\varphi^{0}\varphi^{1}\bar\varphi^{2}, \bar\varphi^{0}\bar\varphi^{1}\bar\varphi^{2}, \varphi^{3}\varphi^{1}\varphi^{2}, \varphi^{3}\bar\varphi^{1}\varphi^{2} $ & \\
    &$\varphi^{3}\varphi^{1}\bar\varphi^{2}, \varphi^{3}\bar\varphi^{1}\bar\varphi^{2}, \bar\varphi^{3}\varphi^{1}\varphi^{2}, \bar\varphi^{3}\bar\varphi^{1}\varphi^{2}, \bar\varphi^{3}\varphi^{1}\bar\varphi^{2}, \bar\varphi^{3}\bar\varphi^{1}\bar\varphi^{2}$ & \\
	\hline
    \multirow{5}{4em}{\centering4} & $ \varphi^{0}\bar\varphi^{0}\varphi^{1}\varphi^{2}, \varphi^{0}\bar\varphi^{0}\bar\varphi^{1}\varphi^{2}, \varphi^{0}\bar\varphi^{0}\varphi^{1}\bar\varphi^{2}, \varphi^{0}\bar\varphi^{0}\bar\varphi^{1}\bar\varphi^{2}, \varphi^{1}\bar\varphi^{1}\varphi^{2}\bar\varphi^{2}, \varphi^{3}\bar\varphi^{3}\varphi^{1}\varphi^{2}$ &  \multirow{5}{4em}{\centering26}\\
    & $\varphi^{3}\bar\varphi^{3}\bar\varphi^{1}\varphi^{2}, \varphi^{3}\bar\varphi^{3}\varphi^{1}\bar\varphi^{2}, \varphi^{3}\bar\varphi^{3}\bar\varphi^{1}\bar\varphi^{2},\varphi^{0}\varphi^{3}\varphi^{1}\varphi^{2}, \varphi^{0}\varphi^{3}\varphi^{1}\bar\varphi^{2}$ &\\
    & $\varphi^{0}\varphi^{3}\bar\varphi^{1}\varphi^{2}, \varphi^{0}\varphi^{3}\bar\varphi^{1}\bar\varphi^{2}, \varphi^{0}\bar\varphi^{3}\varphi^{1}\varphi^{2}, \varphi^{0}\bar\varphi^{3}\varphi^{1}\bar\varphi^{2}, \varphi^{0}\bar\varphi^{3}\bar\varphi^{1}\varphi^{2}$ & \\
    &$\varphi^{0}\bar\varphi^{3}\bar\varphi^{1}\bar\varphi^{2}, \bar\varphi^{0}\varphi^{3}\varphi^{1}\varphi^{2}, \bar\varphi^{0}\varphi^{3}\varphi^{1}\bar\varphi^{2}, \bar\varphi^{0}\varphi^{3}\bar\varphi^{1}\varphi^{2}, \bar\varphi^{0}\varphi^{3}\bar\varphi^{1}\bar\varphi^{2}$& \\
    &$\bar\varphi^{0}\bar\varphi^{3}\varphi^{1}\varphi^{2}, \bar\varphi^{0}\bar\varphi^{3}\varphi^{1}\bar\varphi^{2}, \bar\varphi^{0}\bar\varphi^{3}\bar\varphi^{1}\varphi^{2}, \bar\varphi^{0}\bar\varphi^{3}\bar\varphi^{1}\bar\varphi^{2}, \varphi^{0}\bar\varphi^{0}\varphi^{3}\bar\varphi^{3}$& \\
	\hline
	\multirow{5}{4em}{\centering5} & $ \varphi^{0}\bar\varphi^{0}\varphi^{3}\varphi^{1}\varphi^{2}, \varphi^{0}\bar\varphi^{0}\varphi^{3}\bar\varphi^{1}\varphi^{2}, \varphi^{0}\bar\varphi^{0}\varphi^{3}\varphi^{1}\bar\varphi^{2}, \varphi^{0}\bar\varphi^{0}\varphi^{3}\bar\varphi^{1}\bar\varphi^{2}$& \multirow{5}{4em}{\centering20} \\
    & $\varphi^{0}\bar\varphi^{0}\bar\varphi^{3}\varphi^{1}\varphi^{2}, \varphi^{0}\bar\varphi^{0}\bar\varphi^{3}\bar\varphi^{1}\varphi^{2}, \varphi^{0}\bar\varphi^{0}\bar\varphi^{3}\varphi^{1}\bar\varphi^{2}, \varphi^{0}\bar\varphi^{0}\bar\varphi^{3}\bar\varphi^{1}\bar\varphi^{2}$ & \\
    &$\varphi^{0}\varphi^{3}\bar\varphi^{3}\varphi^{1}\varphi^{2}, \varphi^{0}\varphi^{3}\bar\varphi^{3}\bar\varphi^{1}\varphi^{2}, \varphi^{0}\varphi^{3}\bar\varphi^{3}\varphi^{1}\bar\varphi^{2}, \varphi^{0}\varphi^{3}\bar\varphi^{3}\bar\varphi^{1}\bar\varphi^{2}$&\\
    &$,\bar\varphi^{0}\varphi^{3}\bar\varphi^{3}\varphi^{1}\varphi^{2} \bar\varphi^{0}\varphi^{3}\bar\varphi^{3}\bar\varphi^{1}\varphi^{2}, \bar\varphi^{0}\varphi^{3}\bar\varphi^{3}\varphi^{1}\bar\varphi^{2}, \bar\varphi^{0}\varphi^{3}\bar\varphi^{3}\bar\varphi^{1}\bar\varphi^{2}$ & \\
    &$\varphi^{0}\varphi^{1}\bar\varphi^{1}\varphi^{2}\bar\varphi^{2}, \bar\varphi^{0}\varphi^{1}\bar\varphi^{1}\varphi^{2}\bar\varphi^{2},\varphi^{3}\varphi^{1}\bar\varphi^{1}\varphi^{2}\bar\varphi^{2},\bar\varphi^{3}\varphi^{1}\bar\varphi^{1}\varphi^{2}\bar\varphi^{2}$&\\
	\hline
	\multirow{3}{4em}{\centering6} & $\varphi^{0}\bar\varphi^{0}\varphi^{1}\bar\varphi^{1}\varphi^{2}\bar\varphi^{2}, \varphi^{0}\varphi^{3}\varphi^{1}\bar\varphi^{1}\varphi^{2}\bar\varphi^{2},\varphi^{0}\bar\varphi^{3}\varphi^{1}\bar\varphi^{1}\varphi^{2}\bar\varphi^{2}, \varphi^{3}\bar\varphi^{0}\varphi^{1}\bar\varphi^{1}\varphi^{2}\bar\varphi^{2} $ & \multirow{3}{4em}{\centering10} \\ &$\bar\varphi^{0}\bar\varphi^{3}\varphi^{1}\bar\varphi^{1}\varphi^{2}\bar\varphi^{2},\varphi^{3}\bar\varphi^{3}\varphi^{1}\bar\varphi^{1}\varphi^{2}\bar\varphi^{2}, \varphi^{0}\bar\varphi^{0}\varphi^{3}\bar\varphi^{3}\varphi^{1}\varphi^{2}$&\\
    &$\varphi^{0}\bar\varphi^{0}\varphi^{3}\bar\varphi^{3}\varphi^{1}\bar\varphi^{2}, \varphi^{0}\bar\varphi^{0}\varphi^{3}\bar\varphi^{3}\bar\varphi^{1}\varphi^{2}, \varphi^{0}\bar\varphi^{0}\varphi^{3}\bar\varphi^{3}\bar\varphi^{1}\bar\varphi^{2}$&\\
	\hline
    7 &$\varphi^{0}\bar\varphi^{0}\varphi^{3}\varphi^{1}\bar\varphi^{1}\varphi^{2}\bar\varphi^{2}, \varphi^{0}\bar\varphi^{0}\bar\varphi^{3}\varphi^{1}\bar\varphi^{1}\varphi^{2}\bar\varphi^{2}, \varphi^{0}\varphi^{3}\bar\varphi^{3}\varphi^{1}\bar\varphi^{1}\varphi^{2}\bar\varphi^{2}, \bar\varphi^{0}\varphi^{3}\bar\varphi^{3}\varphi^{1}\bar\varphi^{1}\varphi^{2}\bar\varphi^{2} $& 4\\
    \hline
    8 & $\omega^{4}$ & 1\\
    \hline
	\end{tabular}
    \caption{de Rham cohomology and Betti numbers for $ N_{4} $}
    \label{table:2}
    \end{table}

   \begin{table}[H]
	\centering
	\begin{tabular}{|c|c|c|}
	\hline
	$ k $ & $ H_{d}^{k}(N_{2,2}, \C) $ & $ b_{k}(N_{2,2}) $ \\
	\hline
	0 & 1 & 1 \\
	\hline
	1 & $ \varphi^{1}, \bar\varphi^{1}, \varphi^{2}, \bar\varphi^{2}$ & 4 \\
	\hline
	2 & $\varphi^{1}\bar\varphi^{1}, \varphi^{1}\bar\varphi^{2}, \bar\varphi^{1}\varphi^{2}, \varphi^{1}\varphi^{2}, \bar\varphi^{1}\bar\varphi^{2}, \varphi^{2}\bar\varphi^{2}, \bar\psi^{1}\psi^{2}, \psi^{1}\bar\psi^{2},F\psi^{1}\psi^{2}, \bar F\bar\psi^{1}\bar\psi^{2} $ & 10 \\
	\hline
	\multirow{3}{4em}{\centering3} & $ \varphi^{1}\bar\varphi^{1}\varphi^{2}, \varphi^{1}\bar\varphi^{1}\bar\varphi^{2}, \varphi^{1}\varphi^{2}\bar\varphi^{2}, \bar\varphi^{1}\varphi^{2}\bar\varphi^{2}, F\varphi^{1}\psi^{1}\psi^{2},F\bar\varphi^{1}\psi^{1}\psi^{2},F\varphi^{2}\psi^{1}\psi^{2}$& \multirow{3}{4em}{\centering20} \\
    & $F\bar\varphi^{2}\psi^{1}\psi^{2}, \varphi^{1}\bar\psi^{1}\psi^{2},\bar\varphi^{1}\bar\psi^{1}\psi^{2},\varphi^{2}\bar\psi^{1}\psi^{2},\bar\varphi^{2}\bar\psi^{1}\psi^{2},\varphi^{1}\psi^{1}\bar\psi^{2}$ & \\
    &$\bar\varphi^{1}\psi^{1}\bar\psi^{2},\varphi^{2}\psi^{1}\bar\psi^{2},\bar\varphi^{2}\psi^{1}\bar\psi^{2} \bar F\varphi^{1}\bar\psi^{1}\bar\psi^{2},\bar F\bar\varphi^{1}\bar\psi^{1}\bar\psi^{2},\bar F\varphi^{2}\bar\psi^{1}\bar\psi^{2}, \bar F\bar\varphi^{2}\bar\psi^{1}\bar\psi^{2}$ & \\
	\hline
    \multirow{4}{4em}{\centering4} & $ F\varphi^{1}\bar\varphi^{1}\psi^{1}\psi^{2}, F\varphi^{1}\varphi^{2}\psi^{1}\psi^{2},F\varphi^{1}\bar\varphi^{2}\psi^{1}\psi^{2},F\bar\varphi^{1}\varphi^{2}\psi^{1}\psi^{2}, F\bar\varphi^{1}\bar\varphi^{2}\psi^{1}\psi^{2}$ &  \multirow{4}{4em}{\centering26}\\
    &$F\varphi^{2}\bar\varphi^{2}\psi^{1}\psi^{2},\bar F\varphi^{1}\bar\varphi^{1}\bar\psi^{1}\bar\psi^{2}, \bar F\varphi^{1}\varphi^{2}\bar\psi^{1}\bar\psi^{2},\bar F\varphi^{1}\bar\varphi^{2}\bar\psi^{1}\bar\psi^{2},\bar F\bar\varphi^{1}\varphi^{2}\bar\psi^{1}\bar\psi^{2}$&\\
    & $\bar F\bar\varphi^{1}\bar\varphi^{2}\bar\psi^{1}\bar\psi^{2},\bar F\varphi^{2}\bar\varphi^{2}\bar\psi^{1}\bar\psi^{2},\varphi^{1}\bar\varphi^{1}\bar\psi^{1}\psi^{2}, \varphi^{1}\varphi^{2}\bar\psi^{1}\psi^{2},\varphi^{1}\bar\varphi^{2}\bar\psi^{1}\psi^{2}$ &\\
    & $\bar\varphi^{1}\varphi^{2}\bar\psi^{1}\psi^{2}, \bar\varphi^{1}\bar\varphi^{2}\bar\psi^{1}\psi^{2},\varphi^{2}\bar\varphi^{2}\bar\psi^{1}\psi^{2}, \psi^{1}\bar\psi^{1}\psi^{2}\bar\psi^{2},\varphi^{1}\bar\varphi^{1}\psi^{1}\bar\psi^{2}$ & \\ &$\varphi^{1}\varphi^{2}\psi^{1}\bar\psi^{2},\varphi^{1}\bar\varphi^{2}\psi^{1}\bar\psi^{2},\bar\varphi^{1}\varphi^{2}\psi^{1}\bar\psi^{2}, \bar\varphi^{1}\bar\varphi^{2}\psi^{1}\bar\psi^{2},\varphi^{2}\bar\varphi^{2}\psi^{1}\bar\psi^{2}, \varphi^{1}\bar\varphi^{1}\varphi^{2}\bar\varphi^{2}  $& \\
	\hline
	\multirow{4}{4em}{\centering5} & $F\varphi^{1}\bar\varphi^{1}\varphi^{2}\psi^{1}\psi^{2}, F\varphi^{1}\bar\varphi^{1}\bar\varphi^{2}\psi^{1}\psi^{2}, F\varphi^{1}\varphi^{2}\bar\varphi^{2}\psi^{1}\psi^{2}, F\bar\varphi^{1}\varphi^{2}\bar\varphi^{2}\psi^{1}\psi^{2}, \varphi^{1}\bar\varphi^{1}\varphi^{2}\psi^{1}\bar\psi^{2} $ & \multirow{4}{4em}{\centering20} \\
    & $ \varphi^{1}\varphi^{2}\bar\varphi^{2}\psi^{1}\bar\psi^{2} ,\varphi^{1}\bar\varphi^{1}\bar\varphi^{2}\psi^{1}\bar\psi^{2}, \bar\varphi^{1}\varphi^{2}\bar\varphi^{2}\psi^{1}\bar\psi^{2},  \varphi^{1}\bar\varphi^{1}\varphi^{2}\bar\psi^{1}\psi^{2},\varphi^{1}\varphi^{2}\bar\varphi^{2}\bar\psi^{1}\psi^{2}    $ & \\
    &$\varphi^{1}\bar\varphi^{1}\bar\varphi^{2}\bar\psi^{1}\psi^{2}, \bar\varphi^{1}\varphi^{2}\bar\varphi^{2}\bar\psi^{1}\psi^{2}, \bar F\varphi^{1}\bar\varphi^{1}\varphi^{2}\bar\psi^{1}\bar\psi^{2}, \bar F\varphi^{1}\bar\varphi^{1}\bar\varphi^{2}\bar\psi^{1}\bar\psi^{2}, \bar F\varphi^{1}\varphi^{2}\bar\varphi^{2}\bar \psi^{1}\bar \psi^{2} $ & \\
    &$ \bar F\bar\varphi^{1}\varphi^{2}\bar\varphi^{2}\psi^{1}\bar\psi^{1}\bar\psi^{2},\varphi^{1}\psi^{1}\bar\psi^{1}\psi^{2}\bar\psi^{2}, \bar\varphi^{1}\psi^{1}\bar\psi^{1}\psi^{2}\bar\psi^{2}, \varphi^{2}\psi^{1}\bar\psi^{1}\psi^{2}\bar\psi^{2}, \bar\varphi^{2}\psi^{1}\bar\psi^{1}\psi^{2}\bar\psi^{2}$&\\
	\hline
	\multirow{2}{4em}{\centering6} & $ \varphi^{1}\bar\varphi^{1}\psi^{1}\bar\psi^{1}\psi^{2}\bar\psi^{2}, \varphi^{2}\bar\varphi^{2}\psi^{1}\bar\psi^{1}\psi^{2}\bar\psi^{2}, \varphi^{1}\bar\varphi^{2}\psi^{1}\bar\psi^{1}\psi^{2}\bar\psi^{2}, \bar\varphi^{1}\varphi^{2}\psi^{1}\bar\psi^{1}\psi^{2}\bar\psi^{2}$ & \multirow{2}{4em}{\centering10} \\
    &$\varphi^{1}\varphi^{2}\psi^{1}\bar\psi^{1}\psi^{2}\bar\psi^{2}, \bar\varphi^{1}\bar\varphi^{2}\psi^{1}\bar\psi^{1}\psi^{2}\bar\psi^{2}, F\varphi^{1}\bar\varphi^{1}\varphi^{2}\bar\varphi^{2}\psi^{1}\psi^{2}, \varphi^{1}\bar\varphi^{1}\varphi^{2}\bar\varphi^{2}\psi^{1}\bar\psi^{2} $&\\
    & $\varphi^{1}\bar\varphi^{1}\varphi^{2}\bar\varphi^{2}\bar\psi^{1}\psi^{2}, \bar F \varphi^{1}\bar\varphi^{1}\varphi^{2}\bar\varphi^{2}\bar\psi^{1}\bar\psi^{2} $ & \\
	\hline
    7 &$ \varphi^{1}\bar\varphi^{1}\varphi^{2}\psi^{1}\bar\psi^{1}\psi^{2}\bar\psi^{2}, \varphi^{1}\bar\varphi^{1}\bar\varphi^{2}\psi^{1}\bar\psi^{1}\psi^{2}\bar\psi^{2}, \varphi^{1}\varphi^{2}\bar\varphi^{2}   \psi^{1}\bar\psi^{1}\psi^{2}\bar\psi^{2}, \bar\varphi^{1}\varphi^{2}\bar\varphi^{2}\psi^{1}\bar\psi^{1}\psi^{2}\bar\psi^{2}$& 4\\
    \hline
    8 & $\omega^{4}$ & 1\\
    \hline
	\end{tabular}
    \caption{de Rham cohomology and Betti numbers for $ N_{2,2} $}
    \label{table:3}
    \end{table}

\newpage

\end{document}